\newtheorem{thm}{Theorem}
\newtheorem{prop}[thm]{Proposition}
\newtheorem{lemma}[thm]{Lemma}
\newtheorem{theorem}[thm]{Theorem}
\newtheorem{corollary}[thm]{Corollary}
\newtheorem{proposition}[thm]{Proposition}
\theoremstyle{definition}
\newtheorem{defi}[thm]{Definition}
\newtheorem{definition}[thm]{Definition}
\theoremstyle{remark}
\newtheorem{example}[thm]{Example}
\newtheorem{remark}[thm]{Remark}
\title{Invertible cells in $\omega$-categories}
\author{
  Thibaut Benjamin\footnote{University of Cambridge,
    \texttt{tjb201@cam.ac.uk}}
  \and
  Ioannis Markakis\footnote{University of Cambridge,
    \texttt{ioannis.markakis@cl.cam.ac.uk}}
}
\begin{document}

\maketitle

\begin{abstract}
  We study coinductive invertibility of cells in weak \(\omega\)\-categories.
  We use the inductive presentation of weak \(\omega\)\-categories via an
  adjunction with the category of computads, and show that invertible cells are
  closed under all operations of $\omega$\-categories. Moreover, we give a simple
  criterion for invertibility in computads, together with an algorithm computing
  the data witnessing the invertibility, including the inverse, and the
  cancellation data.
\end{abstract}


\section{Introduction}

Higher category theory is an emergent field with several newfound applications
in computer science and mathematics. In particular, globular higher groupoids
have been used to describe the structure of identity types in Homotopy Type
Theory~\cite{lumsdaine_weak_2009, altenkirch_syntactical_2012,
  vandenberg_types_2011}, establishing a connection with topology, via the
Homotopy Hypothesis. The latter, due to
Grothendieck~\cite{grothendieck_pursuing_1983}, states that weak
\(\omega\)\-groupoids are equivalent to topological spaces up to weak homotopy
equivalence, and is an active topic of research with recent
progress~\cite{henry_homotopy_2023}. Beyond Homotopy Type theory, globular
higher categories have been investigated in connection with higher dimensional
rewriting~\cite{mimram_3dimensional_2014} and topological quantum field
theory~\cite{bartlett_modular_2015}, and in
homology~\cite{lafont_polygraphic_2009}.

Higher categories can be described starting from different shapes, and using
several variations on their axioms. They can have various level of strictness,
and can be truncated. Surveys on different definitions of higher categories have
been published by Cheng and Lauda~\cite{cheng_higherdimensional_2004}, and
Leinster~\cite{leinster_survey_2001}. In this article, we focus on globular weak
\((\infty,\infty)\)\-categories, henceforth called \(\omega\)\-categories. Those
were originally introduced by Batanin~\cite{batanin_monoidal_1998}, and then
Leinster~\cite{leinster_higher_2004} as algebras for the monad \(T\) defined as
the initial contractible operad. Then,
Matsiniotis~\cite{maltsiniotis_grothendieck_2010} proposed another definition in
terms of models of a globular theory, adapting an idea due to
Grothendieck~\cite{grothendieck_pursuing_1983} for \(\omega\)\-groupoids. Those
two definition have been proven equivalent by Ara~\cite{ara_infty_2010} and
Bourke~\cite{bourke_iterated_2020}. More recently, Finster and
Mimram~\cite{finster_typetheoretical_2017} have proposed a more syntactic
definition, viewing \(\omega\)\-categories as the models of a dependent type
theory called \catt{}. Benjamin, Finster and Mimram~\cite{benjamin_globular_2021}
have proven this definition to be equivalent to that of Grothendieck and
Maltsiniotis. Inspired by this type theory, Dean et
al.~\cite{dean_computads_2024} have proposed an equivalent description of the
monad \(T\) of Batanin and Leinster, in terms of an adjunction
\[
  \free : \glob \rightleftarrows \comp : \cell
\]
between globular sets and a category of computads. A direct comparison between
the type theory \catt and the computads
introduced by Dean et al. has been established by Benjamin, Markakis and
Sarti~\cite{benjamin_catt_2024}.

Understanding the homotopy theory of \(\omega\)\-categories is one of the major
open problems of higher category. The homotopy theory of strict
\(\omega\)\-categories has been studied by Lafont et
al.~\cite{lafont_folk_2010}, motivated by its strong connection with homology
and its application to rewriting theory, continuing the work of
Squier~\cite{squier_word_1987}. In particular, they have defined a model
structure in which computads are the cofibrant objects, and for which the
description of the weak equivalences relies heavily on the notion of weakly
invertible cells. It is conjectured that a similar model structure should exist
for weak \(\omega\)\-groupoids and weak \(\omega\)\-categories. It was proven by
Henry~\cite{henry_algebraic_2016} that constructing this model structure on \(\omega\)\-groupoids is the missing
piece to prove the homotopy hypothesis. Partial
results in this direction have been worked out, mainly by Henry and
Lanary~\cite{henry_homotopy_2023}, who have proven the homotopy hypothesis in
dimension \(3\). On weak \(\omega\)\-categories, a weak factorization
corresponding to the putative cofibrations and trivial fibrations is known. Dean
et al.~\cite{dean_computads_2024} and Markakis~\cite{markakis_computads_2024}
have proven that computads are exactly the cofibrant
objects for this conjectured model structure.

In this article, we investigate weakly invertible cells in
\(\omega\)\-categories, a notion used in the characterisation of the weak
equivalences for the conjectured model structure. Such cells have been defined
coinductively for a broad class of globular higher structures by
Cheng~\cite{cheng_omega_2007}, and studied in the case of
strict \(\omega\)\-categories by Lafont et al.~\cite{lafont_folk_2010}.
More recently, Rice~\cite{rice_coinductive_2020} has compared this coinductive
notion of invertibility to other proposed notions of invertibility.

Fujii, Hoshino and
Maehara~\cite{fujii_weakly_2023} have also studied coinductively invertible
cells in \(\omega\)\-categories, and shown that they are closed under all
operations. Nonetheless, our work differs significantly from theirs in various
aspects. We use the inductive presentation of Leinster's monad by Dean et
al.~\cite{dean_computads_2024},
which provides us with an explicit syntax to work with cells of
\(\omega\)\-categories. This allows us to give a more precise version of their
main theorem, together with an elementary proof of it. Our work further provides
a syntactic criterion deciding the invertibility of a cell in a finite
dimensional computad, as well as a
structurally recursive algorithm computing the inverse of a cell and the
cancellation data attached to it.

The complexity of out work originates from that of
\(\omega\)\-categories. To tackle this complexity, we expand the syntax provided
by the inductive presentation of the free \(\omega\)\-category monad with
reusable meta-operations that produce new operations from existing ones,
continuing our previous work~\cite{benjamin_opposites_2024}. More precisely, we
will use the \emph{opposite} and \emph{suspension} operations introduced there,
which amount to interpreting an operation as an operation of the opposite or hom
\(\omega\)\-category respectively. We will also introduce two new operations, the
\emph{functorialisation}, which has only been studied using the type theory
\catt~\cite{benjamin_type_2020}, and the \emph{chain reduction} operation. The
former relies on the idea that each operation of \(\omega\)\-categories is
functorial, and hence can be applied to higher dimensional cells as well. The
latter, replaces a composition operation with an equivalent more biased one over
a simpler pasting diagram. Finally, to tackle the combinatorial complexity of
computing the invertibility data for a composite of invertible cells, we further
need to introduce some ad-hoc operations, specific to the problem. Those
operations allow us to cancel the composition of a sequence of cells with their
inverses. Due to the shape of the computad they live over, we call those
operations \emph{telescopes}.

Using those operations, we give an elementary proof that any composite of
invertible cells in an \(\omega\)\-category is again invertible. We believe that
the simplicity of this proof is strong evidence that the syntactic approach to
\(\omega\)\-categories is promising, and can help further develop the theory of
\(\omega\)\-categories. Proofs built using this syntactic presentation can often
lead to algorithms, or meta-operations expanding the language itself. For
example, given a cell in a computad satisfying our invertibility criterion, our
proof gives a recipe to construct its inverse and the invertible cancellation
witnesses. This procedure has been implemented as an extension of the proof
assistant \catt{}\footnote{\url{http://www.github.com/thibautbenjamin/catt}},
based on the dependent type theory with the same name, dedicated to working in
the language of \(\omega\)\-categories. With this new feature, a user can input
a term corresponding to an invertible cell, and the proof assistant
automatically computes the term corresponding to the chosen inverse, or the term
corresponding to any of its invertibility data, more generally.

\subsection*{Overview of the paper}
In Section~\ref{sec:trees}, we recall the notion of globular pasting diagram and
define operations on them making them into a free strict \(\omega\)\-category.
Section~\ref{sec:weak-cats} then recalls the definition of computads and the
free $\omega$\-category monad on globular sets given by Dean et
al.~\cite{dean_computads_2024}. Section~\ref{sec:constructions} is dedicated to
defining several constructions in \(\omega\)\-categories that allow us to expand
the language in which we work. In Section~\ref{sec:inverse-composite}, we show
our main theorem stating that a composite of invertible cells is invertible.
Finally, Section~\ref{sec:implem} presents and evaluates the implementation of
our main result in the proof assistant \catt{}.

\subsection*{Acknowledgements}

The authors would like to thank Prof. Jamie Vicary for his support during this
project. Furthermore, the second author would like to acknowledge funding from
the Onassis foundation - Scholarship ID: F ZQ 039-1/2020-2021.



\section{Globular pasting diagrams}\label{sec:trees}

In this background section, we will recall globular sets and globular pasting
diagrams, the underlying shapes and the arities of the operations of
\(\omega\)\-categories respectively. Globular pasting diagrams are a family of
globular sets that has been studied extensively under different presentations,
namely globular cardinals~\cite{street_petit_2000},
globular sums~\cite{ara_infty_2010}, or pasting
diagrams~\cite{leinster_higher_2004}. For a survey of those presentations and
their equivalence, we refer to the work of
Weber~\cite[Section~4]{weber_generic_2004}. Here we will expand on the
presentation of Dean et al.~\cite{dean_computads_2024} by also describing the
composition operations of trees in their setting.

\subsection{Globular sets}\label{subsec:gset}

The category \(\G\) of globes has objects the natural numbers, and morphisms
generated by the \emph{cosource} and \emph{cotarget}
\(s,t \colon n\to (n+1)\) under the \emph{coglobularity} relations:
\begin{align*}
  s\circ s &= t \circ s & s\circ t &= t \circ t.
\end{align*}
The category \(\glob\) of globular sets is the category of presheaves on \(\G\).
More explicitly, a globular set \(X \colon \G^{\op}\to \Set\) consists of a
set \(X_n\) for every \(n\in \N\) together with \emph{source} and \emph{target}
functions \(\src,\tgt \colon X_{n}\to X_{n+1}\) satisfying the
\emph{globularity} relations:
\begin{align*}
  \src\circ\src &= \src\circ \tgt & \tgt\circ\src &=\tgt\circ\tgt.
\end{align*}
We will call the elements of \(X_n\) the \(n\)\-cells of \(X\). We also define
the \emph{\(m\)\-source} and \emph{\(m\)\-target} of an \(n\)\-cell \(x\in X_n\)
for \(m < n\) by iterating the source and target functions:
\begin{align*}
  \src_k x =  &= \src(\cdots(\src x)) &
  \tgt_k x =  &= \tgt(\cdots(\tgt x)).
\end{align*}
We will say that a pair of \(n\)\-cells are \emph{parallel} when they have the
same source and target, where, by convention, all \(0\)\-cells are parallel.

The \emph{\(n\)\-disk} \(\disk n\) for \(n\in\N\) is the representable globular
set \(\G(-,n)\). The \emph{\(n\)\-sphere} \(\sphere n\) for \(n \ge -1\) is
defined recursively with an inclusion \(\iota_n\colon\sphere{n-1} \to \disk{n}\)
via the following pushout diagram
\[\begin{tikzcd}
	\sphere{n-1}
  \dar[swap]{\iota_n}
	\rar{\iota_n} &
  \disk n
  \dar[dashed]{}
  \ar[ddr, bend left, "s"] \\
  \disk n
  \rar[dashed]{}
  \ar[drr, bend right, "t"] &
  \sphere n
  \drar[dashed]{\iota_{n+1}}
  \pomark \\
  && \disk{n+1}
\end{tikzcd}\]
starting from \(\sphere {-1} = \emptyset\) being the initial globular set. By
the Yoneda lemma, morphisms \(\disk n\to X\) are in natural bijection to
\(n\)\-cells of \(X\), while morphisms \(\sphere n\to X\) are in natural
bijection to pairs of parallel \(n\)\-cells of \(X\). Under those bijections,
composition with the inclusion \(\iota_n\) sends a cell to its source and
target, which are parallel by the globularity relations.

Globular pasting diagrams are a family of globular sets defined recursively,
using the suspension and the wedge sum of globular sets. The \emph{suspension}
of a globular set \(X\) is the globular set \(\susp X\) with cells given by
\begin{align*}
   (\susp X)_0 &= \{v_-,v_+\} & (\susp X)_{n+1} &= X_n.
\end{align*}
Its source and target functions are given by those of \(X\), with \(v_-\) being
the source and \(v_+\) being the target of every \(1\)\-cell. The
\emph{wedge sum} \(X\vee Y\) of a pair of globular sets \(X\) and \(Y\) with
respect to chosen \(0\)\-cells \(x_-,x_+\in X_0\) and \(y_-,y_+\in Y_0\) is
defined to be the following pushout in \(\glob\):
\[
  \begin{tikzcd}[column sep = small, row sep = small]
    & & X\vee Y & & \\
    & X
    \ar["\inc_1", ru, dashed]
    & &
    Y\ar["\inc_2"', lu, dashed] & \\
    \disk{0}
    \ar[ru,"x_-"] & &
    \disk{0}
    \ar[ru,"y_-"]
    \ar[lu,"x_+"']
    \ar[uu,phantom,"\llcorner"{rotate = 45, very near end}] & &
    \disk{0}
    \ar[lu,"y_+"']
  \end{tikzcd}
\]
The wedge sum defines a monoidal product in the category of globular sets with
two chosen \(0\)\-cells with unit the \(0\)\-disk, being the composition of
cospans of globular sets.

\subsection{Batanin trees and their positions}\label{subsec:trees-pos}

Batanin was the first to observe that globular pasting diagrams are indexed by
isomorphisms classes of rooted planar trees and to give a combinatorial
description of them~\cite{batanin_monoidal_1998}. As explained by
Leinster~\cite{leinster_higher_2004}, there exists one such tree of
dimension $0$, while trees of dimension at most $n+1$ are precisely list of
trees of dimension at most $n$. This leads to the following \emph{inductive}
definition of rooted planar trees, which we will call \emph{Batanin trees}.

\begin{defi}\label{def:tree}
  The set of Batanin trees is inductively defined by one rule: there exists a
  Batanin tree \(\btlist L\) for every list \(L\) of Batanin trees.
\end{defi}

\noindent The rule specifies that the set \(\bat\) of Batanin trees is equipped
with a function \(\btlist \colon \List(\bat)\to \bat\) where \(\List\) is the
free monoid endofunctor
\[
  \List X = \coprod_{n\in \N} X^n.
\]
Being inductively generated by this rule means precisely that the pair
\((\bat,\btlist)\) is the initial algebra for the endofunctor \(\List\). In
particular, there exists a tree $\bt{}$ corresponding to the empty list, and
using this tree, we can define more complicated trees, such as the tree
\[B=\bt{\bt{\bt{},\bt{}},\bt{}}.\]
We visualise those trees by letting \(\btlist L\) be the tree with a new root
and with branches given by \(L\). For example, \(\bt{}\) is the tree with one
vertex and no branches, while the tree \(B\) above can be visualised as follows:
\[
  \begin{tikzcd}[column sep = small]
    \bullet & & \bullet\\
    & \bullet \ar[ul,-]\ar[ur,-] & & \bullet \\
    & & \bullet \ar[ul,-] \ar[ur,-]
  \end{tikzcd}
\]
The \emph{dimension} of a Batanin tree is the height of the corresponding planar
tree, or equivalently the maximum of the dimension of its positions,
defined below. It can be computed recursively by
\[
  \dim (\bt{B_1, \ldots, B_n}) = \max(\dim B_1 +1, \ldots, \dim B_n +1).
\]
In particular, \(\bt{}\) is the unique Batanin tree of dimension \(0\).

\begin{defi}\label{def:pos}
  The globular set of \emph{positions} of a Batanin tree $B$ is the globular
  set \(\Pos{B}\) defined inductively by the formula
  \[
    \Pos{\bt{B_{1}\ldots,B_{n}}} =
    \bigvee_{i=1}^{n} \susp \Pos{B_{i}}.
  \]
\end{defi}

\noindent
This is the globular pasting diagram corresponding to the planar tree \(B\) as
explained by Leinster~\cite[Appendix~F.2]{leinster_higher_2004}. The positions
of a Batanin tree correspond to sectors of the corresponding planar
tree~\cite{berger_cellular_2002}. For example, the globular set of positions of
the tree \(B\) above is the following one
\begin{align*}
  \begin{tikzcd}[ampersand replacement = \&, column sep = small]
    \overset{a}{\bullet} \&\& \overset{b}{\bullet} \\
    \&\vphantom{\bullet}\smash{\overset{g}{\bullet}}\&\& \overset{k}{\bullet}\\
    \&\&\vphantom{\bullet}\smash{\overset{y}{\bullet}}
    \arrow[from = 2-2, to = 1-1, no head, "f" very near start]
    \arrow[from = 2-2, to = 1-3, no head, swap, "h" very near start]
    \arrow[from = 3-3, to = 2-2, no head, "x" very near start]
    \arrow[from = 3-3, to = 2-4, no head, swap, "z" very near start]
  \end{tikzcd} &&
  \begin{tikzcd}[ampersand replacement = \&, column sep = large]
    \vphantom{\bullet}\smash{\overset{x}{\bullet}} \&
    \vphantom{\bullet}\smash{\overset{y}{\bullet}} \&
    \vphantom{\bullet}\smash{\overset{z}{\bullet}}
    \arrow[from=1-1, to=1-2, bend left=60, "f", ""{name = f, anchor = center}]
    \arrow[from=1-1, to=1-2, "g"{description}, ""{name = g, anchor = center}]
    \arrow[from=1-1, to=1-2, bend right=60, "h"', ""{name = h, anchor = center}]
    \arrow[from=f, to=g, Rightarrow, shorten <= 2pt, shorten >= 2pt]
    \arrow[from=g, to=h, Rightarrow, shorten <= 2pt, shorten >= 2pt]
    \arrow[from=1-2, to=1-3, "k"]
  \end{tikzcd}
\end{align*}
Here the positions $f,g,h,a,b$ are the positions of the left branch
$\bt{\bt{},\bt{}}$, while $k$ is the position of the right branch $\bt{}$. The
$0$\-positions $x,y,z$ are the new cells created by the suspension
operation.

\begin{defi}\label{def:lmax-pos}
  A position $p\in \Pos[k]{B}$ of a Batanin tree $B$ will be called
  \emph{locally maximal} when it is not the source, nor the target of
  another position. It will be called \emph{maximal} when \(k = \dim B\).
\end{defi}

\noindent
One can show by induction that the set \(\Pos[k]{B}\) of \(k\)\-positions of a
Batanin tree \(B\) is empty if and only if \(k > \dim B\), so maximal positions
are locally maximal. Moreover, the locally maximal positions of a tree can be
defined recursively by letting the unique position of \(\bt{}\) be locally
maximal, and letting the position \(\inc_j(p)\) of \(\bt{B_1,\ldots,B_n}\) be
locally maximal when \(p\) is locally maximal in \(B_j\), where \(\inc_j\) is
the inclusion of the \(j\)\-th summand in the wedge sum
\[\inc_j : \Sigma\Pos{B_j} \to \bigvee_{i=1}^n \Sigma \Pos{B_i}.\]

\begin{example}\label{ex:susp-tree}
  The \emph{suspension} of a Batanin tree \(B\) is the tree
  \(\susp B = \bt{B}\). By construction, \(\susp \Pos{B} = \Pos{\susp B}\), so
  globular pasting diagrams are preserved by the suspension operation.
\end{example}

\begin{example}\label{ex:disk-tree}
  Representable globular sets are globular pasting diagrams. More specifically,
  we can define recursively on \(n\in \N\) a tree \(D_n\) together with an
  isomorphism \(\Pos{D_n} \cong \disk n\). We start by letting \(D_0 = \bt{}\)
  and the isomorphism being the identity of \(\disk 0\), and then proceed
  to define \(D_{n+1} = \susp D_n\) and the isomorphism to be the composite
  \[
    \Pos{D_{n+1}} = \susp \Pos{D_n} \cong \susp \disk n \cong \disk{n+1}
  \]
  where the last isomorphism sends the unique top-dimensional cell of
  \(\disk n\) to the unique top-dimensional cell of \(\disk{n+1}\).
\end{example}

\subsection{Operations on pasting diagrams}\label{subsec:operations-trees}

Globular pasting diagrams familially represent the free strict
\(\omega\)\-category monad. In this section, we will describe the free strict
\(\omega\)\-category on a globular set, and discuss briefly the monad
multiplication.

\begin{defi}\label{def:bdry-tree}
  The \emph{\(k\)\-boundary} of a Batanin tree \(B\) for \(k \in \N\) is the
  Batanin tree defined recursively by the following formulae
  \begin{align*}
    \bdry[0]{\bt{B_1,\dots,B_n}} &= \bt{} \\
    \bdry[k+1]{\bt{B_1,\dots,B_n}} &= \bt{\bdry[k]{B_1}, \dots,\bdry[k]{B_n}}
  \end{align*}
\end{defi}

\noindent
On the level of rooted, planar trees, one can show inductively that the tree
\(\bdry[k]{B}\) is obtained from \(B\) by removing all nodes of distance at
least \(k\) from the root. In terms of pasting diagrams, it is obtained by
removing all positions of dimension above \(k\) and identifying parallel
\(k\)\-positions. It follows that the positions of the \(k\)\-boundary can be
included back into the positions of the original tree in two ways by picking
the leftmost and rightmost position for every branch. More formally, the
\emph{\(k\)\-cosource} and \emph{\(k\)\-cotarget}
\[
  \srcps[k]{B}, \tgtps[k]{B} : \Pos{\bdry[k]{B}}\to \Pos{B}
\]
are defined recursively for a tree \(B = \bt{B_1,\dots,B_n}\) by the following
formulae:
\begin{align*}
  \srcps[0]{B} &=  \inc_1(v_-) &
  \tgtps[0]{B} &=  \inc_n(v_+) \\
  \srcps[k+1]{B} &= \bigvee_{i=1}^n \susp \srcps[k]{B_i} &
  \tgtps[k+1]{B} &= \bigvee_{i=1}^n \susp \tgtps[k]{B_i}.
\end{align*}
As the name suggests, the \(k\)\-cosource and \(k\)\-cotarget satisfy the
coglobularity relations. Considering our example Batanin tree
$B$, we have that
\begin{align*}
  \bdry[1]{B} =
  \begin{tikzcd}[ampersand replacement = \&, column sep = small]
    \& \bullet \& \& \bullet \\
    \& \& \bullet \ar[ul,-] \ar[ur,-]
  \end{tikzcd}&&
  \Pos{\bdry[1]{B}} =
  \begin{tikzcd}[ampersand replacement = \&]
    \vphantom{\bullet}\smash{\overset{x}{\bullet}} \ar[r, "f"] \&
    \vphantom{\bullet}\smash{\overset{y}{\bullet}}\ar[r, "k"] \&
    \vphantom{\bullet}\smash{\overset{z}{\bullet}}
  \end{tikzcd}
\end{align*}
The source inclusion \(\srcps[1]{B}\) is the one given by the names of the
positions, while the target inclusion \(\tgtps[1]{B}\)
is the one sending \(f\) to \(h\) and the other positions to the ones with the
same name.

\begin{remark}
  To simplify the notation, we will denote by \(\partial B\) the boundary
  \(\partial_{\dim B - 1} B\), and we will denote the corresponding source and
  target inclusions by \(\srcps{B}\) and \(\tgtps{B}\) respectively.
\end{remark}

\begin{definition}\label{def:bin-comp-tree}
  The \emph{\(k\)\-composition} of a pair of Batanin trees \(B\) and \(B'\)
  sharing a common \(k\)\-boundary is the Batanin tree \(\graft{k}{B}{B'}\)
  defined recursively by the formulae
  \begin{align*}
    \graft{0}{\bt{B_{1},\ldots,B_{n}}}{\bt{B'_{1},\ldots,B'_{m}}}
    &= \bt{B_{1},\ldots,B_{n},B'_{1},\ldots,B'_{m}} \\
    \graft{k+1}{\bt{B_{1},\ldots,B_{n}}}{\bt{B'_{1},\ldots,B'_{n}}}
    &= \bt{\graft{k}{B_{1}}{B'_{1}},\ldots,\graft{k}{B_{n}}{B'_{n}}}
  \end{align*}
\end{definition}

\noindent
We observe first that those equations completely determine the composition of
Batanin trees, since for a pair of Batanin trees \(\btlist L\) and
\(\btlist L'\) to share a common positive-dimensional boundary, the lists \(L\)
and \(L'\) must have the same length. The composition of a pair of Batanin trees
along a common $k$\-boundary amounts to appending the branches of the two trees
at every node of distance $k$ from the root. On the level of pasting diagrams,
it realises the glueing of the corresponding pasting diagrams along their common
boundary, as will be shown in Proposition~\ref{prop:bin-comp-tree-po}. For
example, consider the Batanin trees
\begin{align*}
  B &= \bt{\bt{},\bt{\bt{{\color{red}\bt{}}}, \bt{{\color{red}\bt{},\bt{}}}}} \\
  B' &= \bt{\bt{},\bt{\bt{{\color{blue}\bt{},\bt{\bt{}}}}, \bt{}}}
\end{align*}
They share a common \(2\)\-boundary, which is the tree
\[
  \partial_2B = \partial_2B' = \bt{\bt{},\bt{\bt{}, \bt{}}}
\]
and their \(2\)\-composition is the following tree
\[
  \graft{2}{B}{B'} =
    \bt{\bt{},\bt{\bt{{\color{red}\bt{}},{\color{blue}\bt{},\bt{\bt{}}}},
    \bt{{\color{red}\bt{},\bt{}}}}}.
\]
This process may be visualised as follows:
\begin{align*}
  \begin{tikzcd}[column sep = 0em, ampersand replacement = \&]
    \vphantom{\bullet} \\
    \& {\color{red} \bullet} \& {\color{red} \bullet}
    \& \& {\color{red} \bullet} \\
    \& \bullet\ar[u,-, red] \&  \& \bullet \ar[ul,-, red]\ar[ur,-, red] \\
    \bullet \& \& \bullet \ar[ul,-]\ar[ur,-] \& \\
    \& \bullet \ar[ul,-]\ar[ur,-] \& \&
  \end{tikzcd}&&\graft{2}{}{}&&
  \begin{tikzcd}[column sep = 0,ampersand replacement = \&]
    \& \& {\color{blue} \bullet} \& \& \\
    {\color{blue} \bullet}
    \& \& {\color{blue} \bullet}\ar[u,-,blue] \& \& \\
    \& \bullet\ar[ul,-, blue]\ar[ur,-, blue] \&  \& \bullet \\
    \bullet \& \& \bullet \ar[ul,-]\ar[ur,-] \& \\
                            \& \bullet \ar[ul,-]\ar[ur,-] \& \&
  \end{tikzcd}&&=&&
  \begin{tikzcd}[column sep = 0,ampersand replacement = \&]
    \&\&\& {\color{blue} \bullet}\&\&\&\&\\
    \& {\color{red} \bullet}
    \& {\color{blue} \bullet}
    \& {\color{blue} \bullet}\ar[u,-,blue]
    \&\& {\color{red} \bullet}
    \& \& {\color{red} \bullet} \\
    \&\& \bullet\ar[ul,-, red]\ar[u,-, blue]\ar[ur,-, blue] \&\&
    \&\& \bullet\ar[ul,-,red]\ar[ur,-,red] \\
    \bullet \&\&\&\& \bullet\ar[urr,-]\ar[ull,-] \&\& \\
    \&\& \bullet\ar[urr,-]\ar[ull,-]  \&\&\&\&
  \end{tikzcd}
\end{align*}

\begin{proposition}\label{prop:bin-comp-tree-po}
  For every \(k\in \N\) and every pair of Batanin trees \(B\) and \(B'\) sharing
  a common \(k\)\-boundary, there exists a pushout square of the form
  \[\begin{tikzcd}
    \Pos{\partial_k B} & \Pos{\partial_k B'} & \Pos{B'} \\
    \Pos{B} && \Pos{\graft{k}{B}{B'}}
    \arrow["{\tgtps[k]{B}}"', from=1-1, to=2-1]
    \arrow["{\srcps[k]{B'}}", from=1-2, to=1-3]
    \arrow[equals, from=1-1, to=1-2]
    \arrow["\inc_{k,B,B'}^+", from=1-3, to=2-3]
    \arrow["\inc_{k,B,B'}^-", from=2-1, to=2-3]
  \end{tikzcd}\]
\end{proposition}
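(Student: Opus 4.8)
The statement asserts a pushout square relating the positions of trees $B$, $B'$, their common $k$-boundary, and their $k$-composition. The recursive definitions of $\graft{k}{B}{B'}$ (Definition \ref{def:bin-comp-tree}), of the $k$-boundary (Definition \ref{def:bdry-tree}), and of the source/target inclusions all branch on whether $k = 0$ or $k = \ell+1$, and the positions functor $\Pos{-}$ is itself defined recursively via suspension and wedge sum. So the natural strategy is induction on $k$.

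Let me think about the structure. I would first fix notation: write $B = \bt{B_1,\dots,B_n}$ and $B' = \bt{B'_1,\dots,B'_m}$.

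**Base case $k = 0$.** Here $\partial_0 B = \partial_0 B' = \bt{} = D_0$, so $\Pos{\partial_0 B} = \disk 0$, a single point. The map $\tgtps[0]{B} = \inc_n(v_+)$ picks out the last $0$-position of $B$ (the rightmost $0$-cell, which is $v_+$ of the $n$-th summand $\susp\Pos{B_n}$), and $\srcps[0]{B'} = \inc_1(v_-)$ picks out the first $0$-position of $B'$. And $\graft{0}{B}{B'} = \bt{B_1,\dots,B_n,B'_1,\dots,B'_m}$, so $\Pos{\graft{0}{B}{B'}} = \bigvee_{i=1}^n \susp\Pos{B_i} \vee \bigvee_{j=1}^m \susp\Pos{B'_j}$. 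The claim is then that gluing $\Pos{B}$ and $\Pos{B'}$ along the single point identifying the rightmost $0$-cell of $B$ with the leftmost $0$-cell of $B'$ yields exactly this iterated wedge sum. This is essentially the associativity and coherence of the wedge sum (noted in the text to be the composition of cospans of globular sets, hence associative); I would phrase it as: the wedge sum over $n+m$ summands is the pushout of the wedge sum over the first $n$ and the wedge sum over the last $m$ along the shared $0$-disk, with the attaching maps being precisely $\tgt$ of the left block and $\src$ of the right block. A short lemma unwinding $\inc_n(v_+)$ and $\inc_1(v_-)$ against the cospan structure closes this case.

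**Inductive step $k = \ell + 1$.** Now $\partial_{\ell+1} B = \bt{\partial_\ell B_1,\dots,\partial_\ell B_n}$ (and since $B, B'$ share an $(\ell+1)$-boundary, $m = n$ and each $B_i, B'_i$ share an $\ell$-boundary), $\graft{\ell+1}{B}{B'} = \bt{\graft{\ell}{B_1}{B'_1},\dots,\graft{\ell}{B_n}{B'_n}}$, and $\srcps[\ell+1]{B} = \bigvee_{i=1}^n \susp\srcps[\ell]{B_i}$, similarly for the target. Applying $\Pos{-}$ and using $\Pos{\bt{C_1,\dots,C_n}} = \bigvee_i \susp\Pos{C_i}$, the square in question becomes the wedge sum (over $i = 1,\dots,n$) of the suspensions of the squares
\[
  \begin{tikzcd}[ampersand replacement=\&]
    \Pos{\partial_\ell B_i} \& \Pos{\partial_\ell B'_i} \& \Pos{B'_i} \\
    \Pos{B_i} \&\& \Pos{\graft{\ell}{B_i}{B'_i}}
    \arrow["{\tgtps[\ell]{B_i}}"', from=1-1, to=2-1]
    \arrow["{\srcps[\ell]{B'_i}}", from=1-2, to=1-3]
    \arrow[equals, from=1-1, to=1-2]
    \arrow[from=1-3, to=2-3]
    \arrow[from=2-1, to=2-3]
  \end{tikzcd}
\]
each of which is a pushout by the induction hypothesis. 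So the step reduces to two general facts about $\glob$: that suspension $\susp(-)$ preserves pushouts (it is a left adjoint — or one checks directly on cells, since $(\susp X)_{n+1} = X_n$ and $(\susp X)_0$ is constant), and that the pointed wedge sum $\bigvee$, being a colimit (iterated pushout), commutes with pushouts, so a wedge sum of pushout squares is a pushout square. Combining these, the wedged-suspended square is a pushout. I would need to be slightly careful that the wedge-sum basepoints match up along the boundary maps — i.e., that $\susp\tgtps[\ell]{B_i}$ and $\susp\srcps[\ell]{B'_i}$ respect the chosen $0$-cells $v_-, v_+$ used in forming the wedge — but this is immediate since suspension sends every globular map to one fixing $v_\pm$.

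**Main obstacle.** The genuine content is entirely in the base case and in the bookkeeping of the inductive step: making precise that "wedge sum of pushouts is a pushout" with the correct basepoint compatibility, and that the iterated wedge $\Pos{\graft{0}{B}{B'}}$ decomposes as the pushout of $\Pos B$ and $\Pos{B'}$ over the point. Neither is deep, but both require unwinding the cospan/monoidal structure of $(\glob, \vee, \disk 0)$ carefully; I expect the $k=0$ case — correctly identifying $\inc_n(v_+)$ and $\inc_1(v_-)$ as the legs of the relevant cospan composite — to be where most of the care goes. The higher-$k$ case is then a clean "apply a colimit-preserving functor to a pushout" argument once the right general lemmas about $\susp$ and $\vee$ are in place.
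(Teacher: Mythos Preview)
Your proposal is correct and follows essentially the same approach as the paper: induction on \(k\), with the base case handled by associativity of the wedge sum and the inductive step by the fact that both \(\susp\) and \(\bigvee\) preserve (connected) pushouts. The paper justifies these preservation properties slightly differently---\(\susp\) via its factorisation as a left adjoint \(\glob\to\glob_{**}\) followed by the forgetful functor (which preserves connected colimits), and \(\bigvee\) via distributivity of colimits over colimits---but the structure of the argument is the same.
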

\begin{proof}
  We will construct this pushout diagrams by induction on the Batanin trees
  \(B = \bt{B_1,\dots,B_n}\) and \(B' = \bt{B_1',\dots,B_m'}\) and on
  \(k\in\N\). We define first \(\inc_{0,B,B'}^-\) be the morphism induced by the
  inclusions \(\inc_1, \dots, \inc_n\) of the components of the wedge sum, and
  \(\inc_{0,B,B'}^+\) be the morphism induced by the inclusions
  \(\inc_{n+1},\dots,\inc_{n+m}\). The resulting square is a pushout by the
  definition and associativity of the wedge sum operation. We then define
  recursively
  \[
    \inc_{k+1,B,B'}^\pm = \bigvee_{i=1}^n \susp \inc_{k,B_i,B_i'}^\pm.
  \]
  The resulting square is a pushout, since both the wedge sum and the suspension
  operations preserve connected colimits; the former because it factors as a
  left adjoint \(\glob\to\glob_{**}\)~\cite[Section~2]{benjamin_opposites_2024}
  followed by the forgetful functor from bipointed globular sets to globular
  sets, and the latter by distributivity of colimits over colimits.
\end{proof}

\begin{definition}\label{def:free-strict-cat}
  The \emph{free strict \(\omega\)\-category} on a globular set \(X\) is the
  strict \(\omega\)\-category \(F^{\str}X\) consists of the globular set
  \[(F^{\str}X)_n = \coprod_{\dim B \le n} \glob(\Pos B , n)\]
  with source and target functions given by
  \begin{align*}
    \src_k(B,f) &= (\partial_k B,f \circ \srcps[k]{B}) &
    \tgt_k(B,f) &= (\partial_k B,f \circ \tgtps[k]{B}),
  \end{align*}
  with identity operations given by the obvious subset inclusions, and with
  composition operations given by
  \[
    \graft{k}{(B,f)}{(B',f')} = (\graft{k}{B}{B'}, \ideal{f,f'}),
  \]
  where \(\ideal{f,f'}\) is the morphism out of the pushout of
  Proposition~\ref{prop:bin-comp-tree-po}.
\end{definition}

\noindent
It is out of the scope of this paper to show that \(F^{\str}X\) is a strict
\(\omega\)\-category.  Nonetheless, we will use freely that the operations
\(\graft{k}{}{}\) are associative and unital, and that they satisfy the
interchange law, since those properties have been shown for example by
Leinster~\cite[Appendix~F.2]{leinster_higher_2004}. An alternative way to prove
those axioms is to restate them as equations between the cosources, cotargets,
and the pushout inclusions, and show them by induction on the trees involved.

As the name suggests, the functor \(F^{\str}\) is left adjoint to the
underlying globular set functor \(U^{\str}\) forgetting the identity and
composition operations of a strict \(\omega\)\-category. We will denote the
induced monad by \(T^{\str}\colon \glob \to \glob\). As an endofunctor,
\(T^{\str}\) is familially represented by the collection of Batanin trees,
so it is cartesian, finitary and coproduct
preserving~\cite[Theorem~F.2.2]{leinster_higher_2004}. The unit of the monad
\(\eta^{\str}\colon \id \to T^{\str}\) is the natural transformation, whose
component at a globular set \(X\), sends an \(n\)\-cell \(x\in X_n\) to morphism
\(\Pos {D_n}\to \disk n \to X\), where the first morphism is the isomorphism of
Example~\ref{ex:disk-tree} and the second is obtained by the Yoneda lemma.
The monad multiplication is described by the following proposition of
Weber~\cite[Proposition~4.7]{weber_generic_2004}.

To state the proposition, we observe that the free strict \(\omega\)\-category
on the terminal globular set \(T^{\str}1\) has as \(n\)\-cells Batanin trees of
dimension at most \(n\), and it has the \(k\)\-boundary as its \(k\)\-source and
\(k\)\-target function, so we will denote it by \(\bat\) when no confusion may
arise. The assignment of the globular set of positions of a Batanin tree can be
seen as a functor
\[
  \PosOp \colon \smallint\bat \to \glob,
\]
from the category of elements of \(\bat\) sending an object \((n, B)\) to the
globular set \(\Pos B\), and the generating morphisms
\(s,t\colon (n,\partial_n B)\to (n+1,B)\) to \(s_n^B\) and \(t_n^B\)
respectively.

\begin{proposition}\label{prop:unb-comp-tree}
  Let \(f \colon \Pos B\to \bat\) a morphism of globular sets. Then there exists
  a Batanin tree \(\mu_1^{\str}(f)\) of dimension at most \(\dim B\) such that
  \[
    \Pos{\mu_1^{\str}(B,f)} = \colim_{(k,p)\in \smallint \Pos{B}} \Pos{f_k(p)}
  \]
  we will denote the canonical cocone by
  \(j^f \colon \PosOp\circ \smallint f \Rightarrow \Pos{\mu_1^{\str}(B,f)}\).
  Moreover, this construction commutes with cosource and cotargets.
\end{proposition}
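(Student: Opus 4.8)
The plan is to construct the tree $\mu_1^{\str}(B,f)$ by induction on the structure of $B$, simultaneously with the colimit description of its positions, mirroring the inductive definitions of $\Pos{-}$, $\partial_k$, and the cosource/cotarget inclusions used throughout Section~\ref{subsec:operations-trees}. Since $\smallint\Pos{B}$ has as objects pairs $(k,p)$ with $p\in\Pos[k]{B}$ and generating morphisms sending a position to its source and target, the colimit on the right-hand side glues together the pasting diagrams $\Pos{f_k(p)}$ along the cosource and cotarget inclusions of $f_k(p)$ coming from the globularity of $f$. The key observation is that this colimit can be computed by first taking, for each locally maximal position $p$ of top dimension, the contribution $\Pos{f(p)}$, and then glueing consecutive contributions along the shared boundaries dictated by the globular structure of $\Pos B$; this is exactly the iterated wedge-of-suspensions pattern.

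First I would handle the base case: if $B = \bt{}$, then $\Pos B = \disk 0$, the category $\smallint\Pos B$ has a single object and no nontrivial morphisms, $f$ picks out a single tree $f(\star)$, and we set $\mu_1^{\str}(\bt{},f) = f(\star)$, for which the colimit formula holds trivially. For the inductive step, write $B = \bt{B_1,\dots,B_n}$, so $\Pos B = \bigvee_{i=1}^n \susp\Pos{B_i}$. A morphism $f\colon\Pos B\to\bat$ restricts, along each $\inc_i\colon\susp\Pos{B_i}\to\Pos B$, to a morphism $\susp\Pos{B_i}\to\bat$; using that $\bat = T^{\str}1$ and that the $0$-cells $v_-,v_+$ of $\susp\Pos{B_i}$ must be sent to trees $A_{i-1}, A_i$ that agree at the shared vertices of the wedge (so that we obtain a chain $A_0,\dots,A_n$ of trees of dimension at most $\dim B - 1$), and that the $(m+1)$-cells of $\susp\Pos{B_i}$ are the $m$-cells of $\Pos{B_i}$, the restriction of $f$ to $\susp\Pos{B_i}$ is determined by a morphism $g_i\colon\Pos{B_i}\to\bat$ into the appropriate slice, together with a compatible value $A_{i-1}$ on the source vertex. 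I would then define $\mu_1^{\str}(B,f)$ by grafting: first apply the inductive hypothesis to each $g_i$ to get trees $C_i = \mu_1^{\str}(B_i, g_i)$, then suspend and graft them along the chain $A_0,\dots,A_n$, so that $\mu_1^{\str}(B,f)$ is the $0$-composite (along the common $0$-boundary) of the trees obtained by repeatedly suspending-and-grafting, interleaved with the $A_j$'s viewed as lower-dimensional trees padded by suspension. The colimit formula then follows from Proposition~\ref{prop:bin-comp-tree-po}: the colimit over $\smallint\Pos B$ decomposes as a wedge (over $i$) of the colimits over $\smallint(\susp\Pos{B_i})$, each of which is a suspension of the colimit over $\smallint\Pos{B_i}$ (glued appropriately with the contributions $\Pos{A_{i-1}}$, $\Pos{A_i}$ coming from the suspended $0$-cells), and by the inductive hypothesis the latter is $\susp\Pos{C_i}$; reassembling the wedge gives precisely $\Pos{\mu_1^{\str}(B,f)}$, with $j^f$ the evident cocone.

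To verify the dimension bound, I would note that grafting along a common $k$-boundary produces a tree of dimension $\max$ of the two dimensions, and that each $\susp\Pos{C_i}$ contributes dimension $\dim C_i + 1 \le \dim B_i + 1 \le \dim B$ by the inductive hypothesis, while the padding trees $A_j$ have dimension at most $\dim B - 1$; so the total dimension is at most $\dim B$. Finally, for the compatibility with cosources and cotargets, I would argue by induction that the square relating $\mu_1^{\str}(\partial_k B, f\circ\srcps[k]{B})$ (and similarly with $\tgtps[k]{B}$) to $\mu_1^{\str}(B,f)$ commutes, using that $\partial_k$, $\srcps[k]{}$, and the colimit construction are all defined by the same recursion on $\bt{B_1,\dots,B_n}$, so the naturality squares assemble levelwise; the key point is that $\srcps[k]{B}$ picks out leftmost positions and $\tgtps[k]{B}$ rightmost ones, which corresponds under the colimit to restricting the cocone $j^f$ to the first and last summands of each wedge, matching the $k$-cosource and $k$-cotarget of $\mu_1^{\str}(B,f)$.

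The main obstacle I expect is bookkeeping the interaction between the suspended $0$-cells of $\susp\Pos{B_i}$ (which carry genuine tree-data $A_j$, not just basepoints) and the wedge-sum glueing: one must check carefully that the trees assigned to the shared vertices match up across consecutive summands, that grafting them in produces a well-formed Batanin tree, and that the resulting colimit is taken along exactly the cosource/cotarget inclusions prescribed by Proposition~\ref{prop:bin-comp-tree-po} rather than some reindexing of them. Making the chain $A_0,\dots,A_n$ and the grafting order precise, and checking that the universal property of the iterated pushout assembles into the single colimit over $\smallint\Pos B$, is where the real work lies; everything else is a routine induction matching the recursive clauses already set up in Definitions~\ref{def:bdry-tree} and~\ref{def:bin-comp-tree}.
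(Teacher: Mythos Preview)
The paper does not prove this proposition; it attributes the result to Weber and uses it as a black box to describe the monad multiplication $\mu^{\str}$. So there is no in-paper argument to compare your proposal against, and it must be assessed on its own.

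Your inductive step has a genuine gap. Transposing the restriction $f\circ\inc_i\colon\susp\Pos{B_i}\to\bat$ across the suspension adjunction does not yield a morphism $g_i\colon\Pos{B_i}\to\bat$, but rather $g_i\colon\Pos{B_i}\to\Omega\bat$ (with both basepoints $\bt{}$), whose $m$-cells are trees of dimension at most $m+1$ rather than at most $m$; your inductive hypothesis, stated only for morphisms into $\bat$, therefore does not apply to $g_i$. Relatedly, the $0$-cells $v_\pm$ of $\susp\Pos{B_i}$ map to $0$-cells of $\bat$, and the only tree of dimension $0$ is $\bt{}$, so your chain $A_0,\dots,A_n$ is constant at $\bt{}$ and there is nothing nontrivial to interleave. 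The repair is to observe that $\Omega\bat\cong\coprod_{m\ge0}\bat^m$ as globular sets, since a tree of dimension at most $k+1$ is precisely a list of trees of dimension at most $k$ and the boundary maps act componentwise on such lists; by connectedness of $\Pos{B_i}$ the morphism $g_i$ lands in a single summand $\bat^{m_i}$, giving an $m_i$-tuple of morphisms $\Pos{B_i}\to\bat$ to which the inductive hypothesis does apply, and one then sets $\mu_1^{\str}(B,f)=\bt{C_1^{(1)},\dots,C_1^{(m_1)},\dots,C_n^{(m_n)}}$ with $C_i^{(j)}=\mu_1^{\str}(B_i,g_i^{(j)})$. With this correction in place the colimit formula and the cosource/cotarget compatibility follow along the lines you sketch.
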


\noindent
A cell of \(x\in T^{\str}T^{\str}X\) is a pair \((B,f)\) where
\(f\colon \Pos B\to T^{\str}X\). Writing \(f(p) = (f^1(p), f^2(p))\) where
\(f^1(p)\) is a Batanin tree and \(f^2(p) \colon \Pos{f^1(p)} \to X\), we see
that \(f\) is a morphism of globular sets if and only if \(f^1\) is a morphism
and \(f^2\) is a cocone \(\PosOp\circ \smallint f^1 \Rightarrow X\). The monad
multiplication \(\mu^{\str}_X\colon T^{\str}T^{\str}X\to T^{\str}X\) is then
given by
\[
  \mu^{\str}_X(B,f) = (\mu_1^{\str}(B,f^1), \ideal{f^2})
\]
where \(\ideal{f^2}\) is the morphism out of the colimit induced by the cocone
\(f^2\). The last part of Proposition~\ref{prop:unb-comp-tree} ensures that
each \(\mu^{\str}_X\) is a morphism of globular sets.



\section{Weak \texorpdfstring{\(\omega\)}{ω}-categories}
\label{sec:weak-cats}

As seen from Definition~\ref{def:free-strict-cat}, strict
\(\omega\)\-categories are globular sets equipped with a unique way to compose
diagrams of cells indexed by a globular pasting diagram. Weak
\(\omega\)\-categories are a generalisation, in which such diagrams admit a
unique composite up to an invertible higher cell. Following
Leinster~\cite{leinster_higher_2004}, we define weak \(\omega\)\-categories as
algebras for certain monad on globular sets, the initial contractible
globular operad. We will use the description of this monad given by Dean et
al.~\cite{dean_computads_2024}, in terms of an adjunction
\[\free\colon \glob \rightleftarrows \comp \colon \cell\]
with the category
\(\comp\) of computads. In this section, we recall this description of the
\(\omega\)\-category monad, and certain operations on \(\omega\)\-categories
and their computads that were introduced in our previous
work~\cite{benjamin_opposites_2024}.

\subsection{Computads}\label{subsec:computads}

Computads are generating data for \(\omega\)\-categories, and they are defined
by induction on the dimension \(n\in\N\). More
precisely, we define mutually inductively the category \(\comp_n\) of
\(n\)\-computads together with four functors
\begin{align*}
  \free_n &\colon \glob \to \comp_n & \cell_n &\colon \comp_n \to \Set \\
  u_n &\colon \comp_n \to \comp_{n-1} & \type_n &\colon \comp_n \to \Set
\end{align*}
three natural transformations
\begin{align*}
  \ty_n &\colon\cell_n \Rightarrow \type_{n-1}u_n &
  \pr_1, \pr_2 &\colon \type_n \Rightarrow \cell_n
\end{align*}
and an auxiliary subset \(\Full_n(B)\subseteq \type_n\free_n(\Pos B)\) of
spheres for every Batanin tree \(B\) that we will call full. The functor \(u_n\)
forgets the top-dimensional generators of a computad. The functor \(\free_n\)
views a globular set as a computad whose generators are its cells. The functor
\(\cell_n\) returns the \(n\)\-cells of the \(\omega\)\-category generated by
the computad, while the functor \(\type_n\) returns pairs of parallel
\(n\)\-cells. The projections \(\pr_i\) pick the first and second cell of a
parallel pair, while the natural transformation \(\ty_n\) assigns to each cell
its source and target. For the base case, we let \(\comp_{-1}\) be the terminal
category and \(\type_{-1}\) the functor picking the terminal set.

An \(n\)\-computad is a triple \((C_{n-1}, V_n^C, \phi_n^C)\) consisting of
an \((n-1)\)\-computad, a set of \(n\)\-generators and an attaching function
\(V_n^C\to \type_{n-1}(C_{n-1})\), assigning to each generator a source and
target. A morphism of computads \(f\colon C\to D\) is a morphism between the
\(\omega\)\-categories generated by the computads, and it is defined to be a
pair \((f_{n-1}. f_V)\) of a morphism \(C_{n-1} \to D_{n-1}\) together with a
function \(V_n^C\to \cell_n(D)\) preserving the source and target, in that the
following diagram commutes
\[\begin{tikzcd}[column sep = huge]
  V_n^C & \cell_n(D) \\
  \type_{n-1}(C_{n-1}) & \type_{n-1}(D_{n-1})
  \arrow[from=1-1, to=1-2, "f_V"]
  \arrow[from=2-1, to=2-2, "\type_{n-1}(f_{n-1})"]
  \arrow[from=1-1, to=2-1, "\phi_n^C"']
  \arrow[from=1-2, to=2-2, "\ty_{n,D}"]
\end{tikzcd}\]
The truncation functor is the first projection on both objects and
morphisms.

The set \(\cell_n C\) is defined inductively by two rules. The first states that
every generator \(v\in V_n^C\) gives rise to a cell that we denote by
\(\var c\). The second rule states that there exists a cell
\(\coh{B}{A}{f}\) for every Batanin tree \(B\) of dimension at most \(n\), every
full \(A\in \Full_{n-1}(B)\) and every morphism
\(f \colon \free_n \Pos B\to C\). The boundaries of those cells are defined
recursively by
\begin{align*}
  \ty_n(\var v) &= \phi_n^C(v) &
  \ty_n(\coh{B}{A}{f}) &= \type_{n-1}(f_{n-1})(A)
\end{align*}
Notice that the definition of morphisms uses cells and vice versa. This
apparent circularity is resolved using induction-recursion, i.e. reading
the definitions above as the description the initial algebra for some polynomial
endofunctor~\cite[Section~3.3]{dean_computads_2024}.

Composition with a morphism \(f \colon C\to D\) and its action on cells are
defined mutually recursively by
\begin{gather*}
  \cell(f)(\var v) = f_V(v)\\
  \cell(f)(\coh{B}{A}{g}) = \coh{B}{A}{f\circ g} \\
  f\circ g = (f_{n-1}\circ g_{n-1}, \cell_{n-1}(f_{n-1})\circ g_{n-1})
\end{gather*}
together with a proof that \(\ty_n\) is natural. Using mutual induction, we can
then show that this composition operation is associative and unital, and that
\(\cell_n\) is a functor.

The free functor \(\free_n\) sends a globular set \(X\) to the \(n\)\-computad
consisting of \(\free_{n-1}X\), the set \(X_n\) and the attaching function
given by
\[
  \phi_n^{\free X}(x) = (\var \src x , \var \tgt x).
\]
It sends a morphism of globular sets \(f \colon X\to Y\) to the morphism of
\(n\)\-computads consisting of \(\free_{n-1}f\) and the function
\(\var\circ f_n\). The functor of \(n\)\-spheres together with the projection
natural transformations are defined via the following pullback square
\[
  \begin{tikzcd}
    \type_n \rar{\pr_1}\dar[swap]{\pr_2}\pbmark & \cell_n \dar{\ty_n} \\
    \cell_n \rar[swap]{\ty_n} & \type_{n-1}u_n
  \end{tikzcd}
\]
that is, an \(n\)\-sphere is a pair of \(n\)\-cells with the same boundary. We
will often denote such a pair \((a,b)\in\type_n(C)\) by \(\mor ab\), and
write \(c \colon \mor ab\) to denote that a cell \(c\in \cell_{n+1}(C)\) has
boundary \((a,b)\).

To conclude the induction, it remains to define when an \(n\)\-sphere of a
Batanin tree \(B\) is full. To do that, we define the \emph{support}
of a cell \(c\in \cell_n(C)\) to be the set of generators appearing in \(c\).
More formally, the support of \(c\) is defined recursively by
\begin{align*}
  \supp_n(\var v) &= \{v\} &
  \supp_n(\coh{B}{A}{f}) = \bigcup_{p\in \Pos[n]{B}} \supp_n(f_V(p))
\end{align*}
and then a sphere \((a,b)\in \type_n(\free_n(\Pos B))\) is declared to be
full when the support of \(a\) consists of the \(n\)\-positions in the image of
\(s_n^B\) and the support of \(b\) consists of the \(n\)\-positions in the image
of \(t_n^B\). Moreover, if \(n > 0\), we require that the \((n-1)\)\-sphere
\(\ty_n(a) = \ty_n(b)\) is also full. Full spheres amount to ways to compose
the boundary of a globular pasting diagram when \(n = \dim B - 1\), and they
amount to pairs of parallel ways to compose the whole diagram when
\(n \ge \dim B\).

The category of computads is then defined to be the limit of the forgetful
functors \(u_n\), so a computad \(C = (C_n)_{n\in\N}\) is a sequence of
\(n\)\-computads such that \(u_{n+1}C_{n+1} = C_n\). The free functors
\(\free_n \colon \glob \to \comp_n\) are compatible with the forgetful functors,
so they give rise to a functor
\[
  \free\colon \glob\to \comp.
\]
In the opposite direction, we may define a functor
\[
  \cell \colon \comp \to \glob
\]
sending a computad \(C\) to the globular set with cells given by
\(\cell_n(C_n)\) and with source and target functions given by the composition
of the boundary natural transformation \(\ty_n\) with the projections \(\pr_i\).
The functor \(\free\) is left adjoint to \(\cell\). The unit \(\eta\) of the
adjunction is given by the morphisms of globular sets
\[
  \eta_X \colon X \to \cell \free (X)
\]
sending \(x\in X_n\) to the generator cell \(\var x\). The counit
\(\varepsilon\) consists of the morphisms of computads
\[
  \varepsilon_C \colon \free\cell C \to C
\]
determined by the identity functions
\[
  V_n^{\free\cell C} = \cell_n C
\]
The triangle equations for this adjunction can be easily checked.

\begin{remark}\label{rmk:alt-fullness}
  The inductive fullness condition above is equivalent to the following one, as
  shown by Dean et al.~\cite{dean_computads_2024}. We can define more generally
  for a cell \(c\in \cell_n(C)\) its \(k\)\-support \(\supp_{k}(c)\) to be the
  set of \(k\)\-dimensional generators used in the definition of \(c\), or its
  source and target. We will say that \(c\) covers \(C\) when its support
  contains every generator of \(C\). We then say that a sphere
  \(A = (a,b)\in \type_n(\free \Pos{B})\) is full if and only if the support of
  \(a\) is the image of \(s_n^B\) and that of \(b\) is the image of \(t_n^B\).
  This is equivalent in turn to \(a = T(s_n^B)(a')\) and \(b = T(t_n^B)(b')\)
  for cells \(a',b'\in (T\Pos{\bdry[n] B})_n\) that cover
  \(\free \Pos{\bdry[n]{B}}\).
\end{remark}

\subsection{\texorpdfstring{Operations in \(\omega\)}{ω}-categories}
\label{subsec:omega-cats}

Having defined the adjunction between computads and globular sets, we may now
define \(\omega\)\-categories. This definition is equivalent to the one of
Leinster, as shown by Dean et al.~\cite{dean_computads_2024}.

\begin{definition}
  The \emph{free weak \(\omega\)-category monad} \((T, \eta, \mu)\) is the monad
  induced by the adjunction \(\free \dashv \cell\). The category \(\Cat_\omega\)
  of (weak) \(\omega\)-categories is the category of \(T\)\-algebras.
\end{definition}

\noindent
By definition, an \(\omega\)\-category is a pair
\(\wcat X = (X, \alpha \colon TX\to X)\) satisfying an associativity and a unit
axiom. In particular, for every Batanin tree \(B\) and every cell
\(c\in (T\Pos B)_n\), there exists an operation
\begin{gather*}
  c^{\wcat X} \colon \glob(\Pos B, X) \to X_n \\
  c^{\wcat X}(f) = (\alpha\circ Tf)(c)
\end{gather*}
that is natural in that for every morphism of \(\omega\)\-categories
\(g\colon \wcat X \to \wcat Y\),
\[
  g(c^\wcat X(f)) = c^{\wcat Y}(Ug\circ f).
\]
It was shown by the second author~\cite{markakis_computads_2024} that those
operations, and more specifically the operations of the form
\[\coh[\wcat X]{B}{A}{-} = \coh{B}{A}{\id}^{\wcat X}\]
fully determine the structure morphism \(\alpha\) and that they can be chosen
freely subject to source and target conditions. We will call such operations
\emph{compositions} when \(\dim B = n\) and \emph{coherences} when
\(\dim B < n\).

Utilising the idea that natural operations in an \(\omega\)\-category with
arity the pasting diagram \(\Pos B\) correspond to elements of
\(T\Pos B\), to define composition and identity operations in an
\(\omega\)\-category, we will construct a family of cells over pasting
diagrams. More precisely, we define recursively for every Batanin tree
\(B\) and every natural number \(n\), a full \(n\)\-sphere
\(A_{B,n} \in \Full_n(B)\), and we define a cell \(\unbcomp_{n,B}\) with
boundary \(A_{B,n-1}\) when \(n \ge \dim B\) by
\begin{align*}
  \unbcomp_{B,n} &= \begin{cases}
    \var(\id_n) &\text{when } B = D_n \\
    \coh{B}{A_{B,n-1}}{\id} &\text{otherwise}
  \end{cases} \\
  A_{B,n} &= (T(s_n^B)(\unbcomp_{\partial_n B, n}),
    T(t_n^B)(\unbcomp_{\partial_n B, n}))
\end{align*}
The \emph{identity} of an \(\omega\)\-category \(\wcat X\) is the operation
\[
  \id^{\wcat X}_n =
  \unbcomp_{D_n, n+1}^{\wcat X} \colon X_n\cong \glob(\Pos{D_n}, X) \to X_{n+1}
\]
taking a cell \(x\in X_n\) to a cell with source and target \(x\). The
\emph{unbiased composition} over a tree \(B\) is the operation
\[
  \unbcomp^{\wcat X}_B = \unbcomp_{B,\dim B}^{\wcat X} \colon
  \glob(\Pos B, X) \to X_{\dim B},
\]
taking a diagram \(f \colon \Pos B\to X\) to a cell with source and target
the unbiased composite of \(f\circ s^B\) and \(f\circ t^B\) respectively. In
particular, for the Batanin tree
\(B = \graft{k_m}{\graft{k_1}{D_{n_0}}{\dots}}{D_{n_m}}\),
a morphism \(f\colon \Pos{B}\to X\) amounts to a sequence of cells
\(x_i \in X_{n_i}\) such that \(x_i\) and \(x_{i+1}\) are
\(k_{i+1}\)\-composable, and we will write the unbiased composite of \(f\) as
\[\unbcomp^{\wcat X}_{B}(f) =
  \compcell{k_{1}}{x_{0}}{\compcell{k_{m}}{\ldots}{x_{m}}}\]
omitting the index \(k_i\) when \(\dim x_i = \dim x_{i+1} = k+1\).

\subsection{Suspensions and opposites}
\label{subsec:suspension}

Constructing high-dimensional operations of \(\omega\)\-category, or
equivalently cells over a globular pasting diagram, tends to be a
difficult task, leading us to introduce meta-operations that produce new such
cells from existing ones. In our previous paper~\cite{benjamin_opposites_2024},
we introduced two such meta-operations, the \emph{suspension} and the
\emph{opposite}. The former is obtained by interpreting an operation in the
hom \(\omega\)\-categories of an \(\omega\)\-category, while the latter is
obtained by interpreting it in its opposites, introduced in the same paper.

More formally, we define mutually recursively the suspension of a computad
\(\susp \colon \comp\to \comp\) extending the one for globular sets, together
with a natural transformation
\[\begin{tikzcd}
  \glob \ar[r, "\free"] \ar[d, "\susp"'] &
  \comp \ar[r, "\cell"] \ar[d, "\susp"'] &
  \glob\ar[d, "\susp"] \\
  \glob \ar[r, "\free"] &
  \comp \ar[r, "\cell"] &
  \glob
  \ar[from = 1-2, to = 2-1, equals, shorten >= 4pt, shorten <= 4pt,]
  \ar[from = 1-3, to = 2-2, Rightarrow, "\susp^{\cell}"',
    shorten >= 4pt, shorten <= 4pt,]
\end{tikzcd}\]
by the following recursive formulae
\begin{align*}
  (\susp C)_0 &= \set{v_-,v_+} &
  (\susp C)_{n+1}
    &=((\susp C)_n, V_n^C , (\susp^{\cell}, \susp^{\cell})\circ\phi_n^C) \\
  (\susp f)_0 &= \id &
  (\susp f)_{n+1} &= ((\susp f)_n, \susp^{\cell}\circ \circ f_{V,n}) \\
  \susp^{\cell}(\var v) &= \var v &
  \susp^{\cell}(\coh{B}{A}{f})
    &= \coh{\susp B}{(\susp^{\cell}, \susp^{\cell})A}{\susp f}
\end{align*}
The suspension operation allows us to take an operation in the form of a cell
\(c\in (T\Pos B)_n\) and produce a new operation
\(\susp^{\cell}(c)\in (T\Pos{\susp B})_{n+1}\) of higher dimension. For example,
suspending the unbiased composition over the tree
\(\Chain_k = \graft{0}{D_1}{\graft{0}{\cdots}{D_1}}\), we obtain the unbiased
composition operation over the tree
\(\susp^n\Chain_k = \graft{n}{D_{n+1}}{\graft{n}{\cdots}{D_{n+1}}}\), which
corresponds to the composition of \(k\) consecutive \((n+1)\)\-cells.

To define the opposite of a computad, we proceed similarly. Given a set of
positive natural numbers \(w\in \N_{>0}\), we may define an autoequivalence
\[
  \op \colon \glob\to \glob
\]
by swapping the source and target of every \(n\)\-cell of a globular set when
\(n\in w\). This operation preserves globular pasting diagrams, in the sense
that there exists an automorphism \(\op \colon \bat\to \bat\) together with
a family of isomorphisms \(\op^B \colon \Pos{\op B}\to \op \Pos{B}\) for every
Batanin tree \(B\) compatible with the source and target inclusions. We then
extend \(\op\) to an autoequivalence of the category of computads
\(\op\colon \comp\to \comp\) together with a natural isomorphism
\[\begin{tikzcd}
  \glob \ar[r, "\free"] \ar[d, "\op"'] &
  \comp \ar[r, "\cell"] \ar[d, "\op"'] &
  \glob\ar[d, "\op"] \\
  \glob \ar[r, "\free"] &
  \comp \ar[r, "\cell"] &
  \glob
  \ar[from = 1-2, to = 2-1, equals, shorten >= 4pt, shorten <= 4pt,]
  \ar[from = 1-3, to = 2-2, Rightarrow, "\op^{\cell}"',
    shorten >= 4pt, shorten <= 4pt,]
\end{tikzcd}\]
by similar recursive formulae as above:
\begin{align*}
  (\op C)_n &= ((\op C)_{n-1}, V_n^C,
    \swap_n \circ (\op^{\cell},\op^{\cell})\phi_n^C) \\
  (\op f)_n &= ((\op f)_{n-1}, \op^{\cell} \circ f_{V,n}) \\
  \op^{\cell}(\var v) &= \var v \\
  \op^{\cell}(\coh{B}{A}{f}) &= \coh{\op B}{A'}{(\op f)\circ \free_n(\op^B)} \\
  A' &= (T(\op_w^B)^{-1}, T(\op_w^B)^{-1})
    \circ \swap_n \circ (\op^{\cell},\op^{\cell})A
\end{align*}
where \(\swap_n\) swaps the two components of the pullback when \(n+1\in w\) and
it is the identity otherwise. The opposite operation allows us to take an
operation, e.g. an unbiased composition, and construct a new operation over the
pasting diagram with all the cells reversed. For example, the
\(\set{1}\)\-opposite of the pasting diagram \(B = \graft{0}{D_1}{D_2}\) is the
pasting diagram \(\op B = \graft{0}{D_2}{D_1}\). The unbiased composition over
\(B\) is the left whiskering of an \(1\)\-cell with an \(2\)\-cell, and it is
sent to the unbiased composite over \(\op B\) which is the right whiskering.

\subsection{Invertible and equivalent cells}\label{subsec:invertible-cell}

A crucial notion in the study of higher categories is that of equivalence. This
is usually defined by induction on the dimension of the structure: two
elements of a \(0\)\-category are equivalent when they are equal, while two
objects \(x,y\) in an \((n+1)\)\-category are equivalent when there exist
morphisms \(f \colon x\to y \) and \(g\colon y\to x\) such that the compositions
\(f\circ g\) and \(g\circ f\) are equivalent to identities in the respective
\(n\)\-categories of morphisms. In such case, we say that the morphisms \(f\)
and \(g\) are invertible. This definition of equivalence fails may be also
used for \(\omega\)\-categories if interpreted coinductively, as observed
by Cheng~\cite{cheng_omega_2007}:

\begin{definition}\label{def:invertibility}
  The collection of \emph{invertible} cells of an \(\omega\)\-category
  \(\wcat X\) is defined coinductively by saying that a positive-dimensional
  cell \(x \colon u\to v \in X_{n+1}\) is invertible if there exists a cell
  \(\inv x \colon v\to u\), together with a pair of invertible cells
  \begin{align*}
    \unit_x &\colon x \ast_{n} \inv x\to \id_n(u) &
    \counit_x &\colon \inv x \ast_{n} x\to \id_n(v).
  \end{align*}
  We say that a pair of cells \(c, c'\in X_n\) are \emph{equivalent} and write
  \(c \sim c'\) when there exists an invertible cell with source \(c\) and
  target \(c'\).
\end{definition}

\noindent
The general semantics of coinduction is beyond the scope of this article. To
explain the definition, let \(\mathbf{X} = \sqcup_{n}X_{n+1}\) the set of all
positive-dimensional cells of \(\wcat X\), and consider the endofunction \(F\)
sending a subset \(U \subset \mathbf{X}\) to the set of cells
\(x\in \mathbf{X}\) for which there exist \(\inv x \in \mathbf{X}\) and
\(\unit_{x},\counit_{x}\in U\) satisfying the same boundary conditions as in
Definition~\ref{def:invertibility}. The function \(F\) is monotone, so by
Tarski's fixed point theorem, it has a greatest postfixed point \(W\), which we
will call the set of invertible cells of \(\wcat X\). By definition, it is the
maximum set such that \(W \subset F(W)\). This provides a method for proving
invertibility of a set of cells \(U\): it suffices to show that
\(U\subseteq F(U)\) to conclude that \(U\subseteq W\). We will use this
method to show for example that coherence operations produce invertible cells.

\begin{lemma}\label{lem:equiv-sym-and-functors}
  The relation \(\sim\) is symmetric and preserved by every morphism.
\end{lemma}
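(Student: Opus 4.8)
The plan is to prove the two assertions separately, both directly from the coinductive characterisation of invertibility, i.e.\ by exhibiting suitable postfixed subsets of the set $\mathbf{X}$ of positive-dimensional cells and appealing to the fact that the set $W$ of invertible cells is the greatest postfixed point of the monotone operator $F$ introduced after Definition~\ref{def:invertibility}.

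For symmetry, suppose $c\sim c'$, witnessed by an invertible cell $x\colon c\to c'$. Since $x$ is invertible, there is a cell $\inv x\colon c'\to c$ together with invertible cells $\unit_x\colon x\ast_n\inv x\to\id_n(c)$ and $\counit_x\colon \inv x\ast_n x\to\id_n(c')$. I claim $\inv x$ is itself invertible, which immediately gives $c'\sim c$. To see this, I would take $\inv{(\inv x)} := x$, and use $\counit_x$ and $\unit_x$ (in that order, with roles swapped) as the witnessing cells; their boundaries are exactly of the required shape for $\inv x$ to be invertible, and by hypothesis they are invertible. Thus the singleton $\{\inv x\}$ — or more robustly the set $W$ itself together with all such swapped inverses — is contained in $F$ of it, so $\inv x\in W$ and $\sim$ is symmetric. (One can phrase this cleanly by noting that $F$ is symmetric under the operation $x\mapsto\inv x$, so $W$ is closed under it.)

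For preservation by morphisms, let $g\colon\wcat X\to\wcat Y$ be a morphism of $\omega$-categories and let $W_X$, $W_Y$ be the sets of invertible cells of $\wcat X$, $\wcat Y$. I would show that $g(W_X)\subseteq F_Y(g(W_X))$, where $F_Y$ is the operator defining invertibility in $\wcat Y$; by the greatest-postfixed-point property this yields $g(W_X)\subseteq W_Y$, which is exactly the statement that $g$ preserves invertibility, and hence that $\sim$ is preserved (if $x\colon c\to c'$ is an invertible cell in $\wcat X$, then $g(x)\colon g(c)\to g(c')$ is an invertible cell in $\wcat Y$). Given $x\in W_X$ with data $\inv x$, $\unit_x$, $\counit_x$, the candidate inverse of $g(x)$ is $g(\inv x)$, with witnesses $g(\unit_x)$ and $g(\counit_x)$. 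The key point is that $g$, being a morphism of $\omega$-categories, commutes with the structure maps and hence with all the operations $c^{\wcat X}(-)$ — in particular with binary composition $\ast_n$ and with identities $\id_n$ — via the naturality equation $g(c^{\wcat X}(f)) = c^{\wcat Y}(Ug\circ f)$ stated in Section~\ref{subsec:omega-cats}. Therefore $g(x)\ast_n g(\inv x) = g(x\ast_n\inv x)$ and $g(\id_n(u)) = \id_n(g(u))$, so $g(\unit_x)$ really does have source $g(x)\ast_n g(\inv x)$ and target $\id_n(g(u))$, as required, and similarly for $g(\counit_x)$; and $g(\unit_x), g(\counit_x)\in g(W_X)$ by construction. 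This establishes $g(W_X)\subseteq F_Y(g(W_X))$.

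The main obstacle is bookkeeping rather than conceptual: one must be careful that the coinductive definition is genuinely ``global'' — the witnesses $\unit_x,\counit_x$ are required to lie in the \emph{same} postfixed set, so the argument has to be run with the full set $g(W_X)$ (respectively the closure of $W$ under $x\mapsto\inv x$) rather than with individual cells, and one must check that the boundary conditions match on the nose after transporting along $g$. Once the compatibility of $g$ with $\ast_n$ and $\id_n$ is invoked, everything else is immediate from the definitions.
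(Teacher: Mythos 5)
Your proof is correct and follows essentially the same route as the paper: symmetry by observing that $\inv x$ is invertible with $\inv{(\inv x)} = x$ and the roles of $\unit_x$ and $\counit_x$ swapped, and preservation by running the coinduction on the image $g(W_X)$ of the invertible cells. The only small caution is that the singleton $\{\inv x\}$ alone is not a postfixed set (the witnesses $\counit_x,\unit_x$ must also lie in the set used), but your hedge of working with $W$ enlarged by the swapped inverses handles this correctly.
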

\begin{proof}
  Let \(\wcat X\) be an \(\omega\)\-category and \(c\sim c'\) be equivalent
  cells. Then there exists an invertible cell \(x\) with source \(c\) and
  target \(c'\). The inverse \(\inv x\) is again invertible with
  \(\inv{(\inv x)} = x\), \(\unit_{\inv x} = \counit_x\) and
  \(\counit_{\inv x} = \unit_x\). Its source is \(c'\) and its target is \(c\),
  so \(c' \sim c\). Therefore, the relation \(\sim\) is symmetric.

  To show that a morphism \(f \colon \wcat X\to \wcat Y\) preserves equivalence,
  we will show coinductively that it preserves invertibility. More precisely,
  let \(U\) the set of cells of \(\wcat Y\) that are the image of some
  invertible cell of \(\wcat X\). Then for every cell \(y\in U\), there exists
  some invertible cell \(x\) of \(\wcat X\) such that \(f(x) = y\). Then we
  may define \(\inv y = f(\inv x)\) and \(\unit_y = f(\unit_x) \in U\) and
  \(\counit_y = f(\counit_x)\in U\). This shows that \(U\subset F(U)\), so
  every cell of \(U\) is invertible in \(\wcat Y\).
\end{proof}

\begin{prop}\label{prop:invertible-coh}
  Let \(B\) be a Batanin tree and \(A \in \Full_{n-1}(B)\) be a full
  sphere such that \(\dim B < n\). Then for every \(\omega\)\-category
  \(\wcat X\) and every morphism of globular sets \(f\colon \Pos B\to X\), the
  coherence cell \(\coh[\wcat X]{B}{A}{f}\) is invertible.
\end{prop}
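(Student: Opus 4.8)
The plan is to prove invertibility coinductively, using the method described after Definition~\ref{def:invertibility}: exhibit a set $U$ of cells closed under the functor $F$, and containing all coherence cells of the prescribed shape. The natural candidate is
\[
  U = \bigl\{\coh[\wcat X]{B}{A}{f} : \dim B < n,\ A \in \Full_{n-1}(B),\ f\colon \Pos B \to X \bigr\},
\]
taken across all $\omega$\-categories $\wcat X$ simultaneously, so that the coinductive hypothesis is available at the next dimension down. To show $U \subseteq F(U)$, fix such a coherence cell $x = \coh[\wcat X]{B}{A}{f}$ with $A = \mor ab$ and boundary $u \to v$, where $u = a^{\wcat X}(f)$ and $v = b^{\wcat X}(f)$. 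I must produce an inverse together with unit and counit cells that again lie in $U$.

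The key observation is that because $\dim B < n$, every full sphere over $B$ in dimension $n-1$ is itself a pair of parallel coherences, so there is a great deal of room to write down the inverse and the cancellation witnesses as further coherences. First I would take $\inv x = \coh[\wcat X]{B}{\mor ba}{f}$, using that $\mor ba \in \Full_{n-1}(B)$ whenever $\mor ab$ is (fullness is symmetric in the two components up to swapping the roles of $s_{n-1}^B$ and $t_{n-1}^B$, which still gives a full sphere because $\dim B < n$ forces both supports to be the whole of $\free\Pos{\bdry[n-1]B}$; more carefully, when $\dim B \le n-1$ both $a$ and $b$ cover $\free\Pos B$ after pushing forward, so swapping is harmless). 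Then $x \ast_n \inv x$ is a cell parallel to $\id_n(u)$, and both are obtainable as instances of coherence operations over the tree $B$ — indeed over the pasting diagram $\graft{n-1}{B}{B}$ suitably collapsed, or directly over $B$ since the composite factors through $f$. So I would define
\[
  \unit_x = \coh[\wcat X]{B}{\mor{(x \ast_n \inv x)}{\id_n(u)}}{f}, \qquad
  \counit_x = \coh[\wcat X]{B}{\mor{(\inv x \ast_n x)}{\id_n(v)}}{f},
\]
after checking that the indicated spheres are full and of dimension $\dim B < n < n+1$, so that $\unit_x, \counit_x \in U$. Each of these has the correct boundary by construction, which is exactly what $F$ demands.

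The main obstacle is the bookkeeping around fullness: I need to verify that the spheres $\mor ba$, $\mor{(x\ast_n\inv x)}{\id_n(u)}$, and $\mor{(\inv x\ast_n x)}{\id_n(v)}$ genuinely lie in $\Full_{n-1}(B)$, which amounts to computing supports and invoking Remark~\ref{rmk:alt-fullness}. The point that makes this work — and which I would state as a small lemma — is that when $\dim B < n$, any cell of $\free\Pos{\bdry[n-1]B}$ that arises from a coherence over $B$ (such as the unbiased composites $\unbcomp$, or the binary composites of such) automatically covers $\free\Pos{\bdry[n-1]B}$, because the support of a coherence $\coh{B}{A}{\id}$ includes all positions of $B$ and hence, after boundary, all positions of $\bdry[n-1]B$. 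Once this covering lemma is in place, fullness of all three spheres is immediate, naturality in $f$ handles the passage to an arbitrary morphism $f\colon\Pos B\to X$, and the coinductive step closes. A secondary point to be careful about is strictness: $x \ast_n \inv x$ is only parallel to $\id_n(u)$, not equal, so the unit and counit cells are genuinely needed and cannot be replaced by identities — but this is exactly why the statement is about invertibility rather than a strict inverse, and the coherence machinery supplies precisely these witnesses.
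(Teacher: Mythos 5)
Your proposal follows essentially the same route as the paper: a coinduction on the set of coherence cells, with the inverse obtained by swapping the two components of the sphere (legitimate because $\dim B < n$ forces $s_{n-1}^B = t_{n-1}^B = \mathrm{id}$, so both components cover the whole tree), and with $\unit_x$, $\counit_x$ given by further coherences over the same tree $B$, whose fullness is automatic since $B$ has no positions of dimension $n$.

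The one place where your write-up has a real (though fixable) imprecision is the definition of $\unit_x$ and $\counit_x$ for general $f$. As written, the sphere $\mor{(x\ast\inv x)}{\id(u)}$ is a pair of cells of $X$, but the sphere argument of $\coh{B}{-}{f}$ must be a full sphere of the computad $\free\Pos B$; so your formula does not typecheck until you exhibit cells $c_1, c_2 \in (T\Pos B)_n$ over the tree itself with $c_1^{\wcat X}(f) = x\ast\inv x$ and $c_2^{\wcat X}(f) = \id(u)$. You gesture at this (``the composite factors through $f$''), but it is exactly the step that has to be made explicit for the coinductive step to close. The paper sidesteps it by a cleaner two-stage organization: first run the coinduction entirely inside the free $\omega$\-category $T\Pos B$ with $f = \id$, where $x \ast_{n-1} \inv x$ and $\id(u)$ literally are cells of $T\Pos B$ and the spheres are well-formed on the nose; then observe that $\coh[\wcat X]{B}{A}{f} = (\alpha\circ Tf)(\coh{B}{A}{\id})$ is the image of an invertible cell under a morphism of $\omega$\-categories, and invoke Lemma~\ref{lem:equiv-sym-and-functors} (morphisms preserve invertibility, itself a one-line coinduction). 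If you restructure your argument this way, your ``covering lemma'' also becomes unnecessary: the only fullness checks needed are that $(v,u)$, $(u,u)$, $(v,v)$ are full (immediate from fullness of $(u,v)$ and $\dim B < n$) and that the $n$-supports of the four composite/identity cells are empty (immediate since $\Pos[n]{B} = \emptyset$). A last cosmetic point: in the setting of the proposition $x$ is an $n$-cell, so the relevant composition is $\ast_{n-1}$, not $\ast_n$.
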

\begin{proof}
  We will first show coinductively that for a Batanin tree \(B\), the set of
  cells \(U\) of the form \(\coh{B}{A}{\id}\) for some full sphere
  \(A\in \Full_{n-1}(B)\) such that \(\dim B < n\) consists of invertible
  cells. For that, let \(x = \coh{B}{A}{\id} \in X_n\) be such a cell and let
  \(A = (u,v)\). By the assumption on the dimension of \(A\), the \(n\)\-spheres
  \((u,u)\), \((v,v)\) and \((v,u)\) are all full, so we may define the
  inverse cell \(\inv x = \coh{B}{(v,u)}{\id}\). The support of the cells
  \(x\ast_{n-1} \inv x\), \(\id_n(u)\), \(\inv x\ast_{n-1} x\) and \(\id_n(v)\)
  are empty, since \(B\) has no positions of dimension \(n\), and their
  boundaries are full as explained above. Therefore, we may also define the
  cells
  \begin{align*}
    \unit_x &= \coh{B}{\mor {x\ast_{n-1} \inv x}{\id_n(u)}}{\id} \in U \\
    \counit_x &= \coh{B}{\mor{\inv x\ast_{n-1} x}{\id_n(v)}}{\id} \in U.
  \end{align*}
  By coinduction, it follows that every cell in \(U\) is invertible.

  Let now \(\wcat X = (X,\alpha)\) be an \(\omega\)\-category and
  \(f\colon \Pos B\to X\) be a morphism of globular sets. Then
  \(Tf \colon T\Pos B\to TX\) is a morphism of free \(\omega\)\-categories
  \(F\Pos B\to FX\), and \(\alpha \colon TX\to X\) is a morphism
  \(FX\to \wcat X\), hence the
  cell
  \[
    \coh[\wcat X]{B}{A}{f} = (\alpha\circ Tf)(\coh{B}{A}{\id})
  \]
  is the image of an invertible cell under a morphism of \(\omega\)\-categories.
  It follows by Lemma~\ref{lem:equiv-sym-and-functors} that
  \(\coh[\wcat X]{B}{A}{f}\) is invertible.
\end{proof}

\begin{corollary}\label{cor:equiv-refl}
  The relation \(\sim\) is reflexive.
\end{corollary}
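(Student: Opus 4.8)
The plan is to exhibit, for each cell $c \in X_n$ of an arbitrary $\omega$-category $\wcat X$, an invertible cell whose source and target are both $c$; by Definition~\ref{def:invertibility} this is exactly what reflexivity of $\sim$ demands. The natural candidate is the identity cell $\id_n^{\wcat X}(c)$, so the task reduces to showing that identities are invertible. First I would observe that $\id_n^{\wcat X} = \unbcomp_{D_n, n+1}^{\wcat X}$ is, by the discussion preceding this corollary, an instance of the operation $\coh[\wcat X]{B}{A}{f}$ with $B = D_n$, $A = A_{D_n, n}$, and $f$ the canonical isomorphism $\Pos{D_n} \cong \disk n \to X$ picking out $c$. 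Since $\dim D_n = n < n+1$, this is a coherence cell in the sense of Proposition~\ref{prop:invertible-coh}.

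Next I would simply invoke Proposition~\ref{prop:invertible-coh} with the tree $D_n$, the full sphere $A_{D_n,n} \in \Full_n(D_n)$, and the morphism $f \colon \Pos{D_n} \to X$ corresponding to $c$, to conclude that $\id_n^{\wcat X}(c) = \coh[\wcat X]{D_n}{A_{D_n,n}}{f}$ is invertible. Its source and target are both $c$ by the defining property of the identity operation recalled in Section~\ref{subsec:omega-cats}. Hence $c \sim c$ for every cell $c$, and $\sim$ is reflexive.

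The only subtlety I anticipate is bookkeeping rather than mathematics: one must make sure that $\id_n^{\wcat X}(c)$ genuinely is of the form $\coh[\wcat X]{B}{A}{f}$ with $\dim B < n+1$, which is immediate from $\dim D_n = n$, and that the relevant sphere $A_{D_n,n}$ is full, which is built into its definition in Section~\ref{subsec:omega-cats}. There is no real obstacle here; the corollary is a direct consequence of Proposition~\ref{prop:invertible-coh} once one identifies identities with coherences of strictly smaller-dimensional source tree. One could alternatively give a self-contained coinductive argument mirroring the proof of Proposition~\ref{prop:invertible-coh}, taking $U$ to be the set of all identity cells and their iterated units and counits, but reusing the proposition is cleaner.
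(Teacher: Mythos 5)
Your proposal is correct and matches the paper's argument: the paper's proof simply says the corollary ``follows by invertibility of identity cells,'' and the justification for that invertibility is exactly what you spell out, namely that $\id_n^{\wcat X}(c) = \unbcomp_{D_n,n+1}^{\wcat X}(c)$ is a coherence cell $\coh[\wcat X]{D_n}{A_{D_n,n}}{f}$ with $\dim D_n = n < n+1$, so Proposition~\ref{prop:invertible-coh} applies. Your extra bookkeeping (fullness of $A_{D_n,n}$, source and target both equal to $c$) is accurate and just makes explicit what the paper leaves implicit.
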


\begin{proof}
  The corollary follows by invertibility of identity cells.
\end{proof}



\section{Constructions in weak \texorpdfstring{\(\omega\)}{ω}-categories}
\label{sec:constructions}
This section is dedicated to expanding our toolbox to work with
\(\omega\)\-categories. We extend the language of computads with additional
constructions, defining simpler ways to describe some cells.

\subsection{Unbiased unitors}
We first define a family of cells that we call the \emph{unbiased unitors}.
Intuitively, those are cells which take composite of identities onto an
identity. More formally, a composite of identities is a composite where all
top-dimensional cells are identities. In order to define those precisely, we
rely on the following result.

\begin{lemma}\label{lemma:id-everywhere}
  For a Batanin tree \(B\) and every \(n\in\N\), the source and target inclusions
  \(s^{B}_n, t^{B}_n : \Pos{\bdry[n] B} \to \Pos{B}\) induce bijections between
  positions of dimension \(k < n\). Moreover, they are injective on positions of
  dimension \(n\), and for every position \(p \in \Pos[n]{B}\), there exists
  unique position \(q\in \Pos[n]{B}\) such that \(s_n^B(q)\), \(t_n^B(q)\) and
  \(p\) are parallel.
\end{lemma}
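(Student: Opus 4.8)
The plan is to prove this by induction on the structure of the Batanin tree $B = \bt{B_1,\dots,B_n}$, using the recursive definitions of $\Pos{-}$, $\bdry[k]{-}$, and the cosource/cotarget inclusions in terms of wedge sums and suspensions. The base case $B = \bt{}$ is immediate: when $n = 0$ there are no positions of dimension $\ge 1$, so both $s^B_0$ and $t^B_0$ are maps from $\Pos{\bt{}} \cong \disk 0$ to itself hitting the same single position, and the statements hold vacuously or trivially; for $n > 0$ there is nothing to prove since $\Pos{\bt{}}$ has positions only in dimension $0$.

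For the inductive step, I would treat the case $n = 0$ (where $\bdry[0]{B} = \bt{}$) separately from $n \ge 1$. When $n \ge 1$, I would use that $\Pos{B} = \bigvee_{i=1}^{m}\susp\Pos{B_i}$, that $\bdry[n]{B} = \bt{\bdry[n-1]{B_1},\dots,\bdry[n-1]{B_m}}$, and that $s^B_n = \bigvee_i \susp s^{B_i}_{n-1}$, similarly for $t$. Since the suspension shifts dimension by one and the wedge sum glues only along $0$-cells, a $k$-position of $\Pos{B}$ with $k \ge 1$ lies in exactly one summand $\susp\Pos{B_i}$ and corresponds to a $(k-1)$-position of $B_i$; the $0$-positions are the wedge points together with the images of the two apex $0$-cells of each suspension. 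I would then reduce each clause of the lemma to the corresponding clause for the $B_i$ at dimension $n-1$: bijectivity on positions of dimension $k < n$ follows because for $k \ge 1$ it reduces to the inductive hypothesis on each $B_i$ in dimension $k - 1 < n - 1$, while for $k = 0$ both inclusions restrict to the identity on the (shared) wedge-point/apex $0$-cells; injectivity in dimension $n$ and the existence-and-uniqueness of the parallel companion position $q$ likewise reduce summand-by-summand to the inductive hypothesis for $B_i$ in dimension $n-1$, using that parallelism in $\susp\Pos{B_i}$ corresponds to parallelism in $\Pos{B_i}$ one dimension down (and that distinct summands share only $0$-cells, so no spurious identifications or parallel pairs arise across summands).

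For the case $n = 0$: here $\bdry[0]{B} = \bt{}$ has a single $0$-position, and $s^B_0 = \inc_1(v_-)$, $t^B_0 = \inc_m(v_+)$ pick out the leftmost and rightmost $0$-positions of $\Pos B$. There are no positions of dimension $k < 0$, so the first claim is vacuous; both maps are trivially injective on the one-element set of $0$-positions of $\bt{}$; and for the last claim, given any $0$-position $p \in \Pos[0]{B}$, all $0$-positions are parallel by convention, and the unique $q \in \Pos[0]{\bt{}}$ is the unique position, whose images $s^B_0(q), t^B_0(q)$ are then parallel to $p$ as required (uniqueness being automatic since $\Pos{\bt{}}$ has only one $0$-position).

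The main obstacle I anticipate is bookkeeping the $0$-positions correctly through the wedge sum: the wedge sum identifies various $0$-disks, so when $n \ge 1$ I must be careful that the inductive hypotheses for the $B_i$ in dimension $n - 1 \ge 0$ — which themselves involve $0$-positions when $n = 1$ — glue consistently, i.e. that the companion position $q$ for a position $p$ in one summand is genuinely unique in $\Pos[n]{B}$ and not duplicated by a parallel position coming from another summand. This is handled by the observation that positions of dimension $\ge 1$ never lie in more than one summand and that $s^B_n, t^B_n$ are built as wedge sums of the per-summand maps, but it needs to be stated carefully; I would isolate it as the one genuinely non-formal point. Everything else is a routine unwinding of the recursive definitions together with the fact, already recorded in the excerpt, that $\Pos[k]{B}$ is empty exactly when $k > \dim B$.
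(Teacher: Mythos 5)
Your proposal is correct and follows the same route as the paper, which simply asserts that the lemma holds ``by straightforward induction on the tree and on $n$''; your write-up is a faithful elaboration of exactly that induction, including the one genuinely delicate point (that positions of dimension $\ge 1$ live in a unique wedge summand and that parallelism cannot cross summands, so the companion position $q$ is not duplicated). Note only that the statement's ``$q\in \Pos[n]{B}$'' must be read as $q\in\Pos[n]{\bdry[n]{B}}$ for $s_n^B(q)$ to make sense, which is how you correctly interpret it.
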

\begin{proof}
  The proof is by straightforward induction on the tree and on \(n\). It can
  be seen also from the pictorial description of trees and their positions.
\end{proof}

\noindent
Consider a Batanin tree \(B\) of dimension \(d\). Using
Lemma~\ref{lemma:id-everywhere} we define the map
\(\underline{\id} : \free\Pos{B} \to \free{\Pos{\bdry[d-1] B}}\),
defined recursively as follows:
\begin{itemize}
  \item To each position \(p\) of dimension \(k < d-1\), it assigns the cell
    \(\var q\), where \(q\) is the preimage of \(p\)
  \item To each position \(p\) of dimension \(d-1\), it assigns the cell
    \(\var q\), where \(q\) is the unique position such that \(s_{d-1}^B(q)\)
    and \(p\) are parallel
  \item To each position \(p\) of dimension \(d\), it assigns the cell
    \(\id(\underline{\id}(\src p))\)
\end{itemize}
By construction, it follows that \(\underline{\id}\) is a common retraction of
the source and target inclusions:
\[
  \underline{\id} \circ \free(s_{d-1}^B) =
  \underline{\id} \circ \free(t_{d-1}^B) =
  \id_{\free \Pos{\bdry B}}
\]

\begin{definition}
  Consider a Batanin tree \(B\) of dimension \(d\) together with a cell
  \(a \in (T\Pos{\bdry[d-1] B})_{d-1}\) that covers \(\bdry[d-1]{B}\).
  The \emph{unbiased unitor}
  \(\unitor(B,a)\) is the cell
  \[
    \unitor(B,a) = \coh{\bdry B}{u\to v}{\id_{\Pos{\bdry[d-1] B}}}\in
    (T\Pos{\partial B})_d
  \]
  where
  \begin{align*}
    u &= \coh{B}{T(s_{d-1}^{B})(a)\to T(t_{d-1}^{B})(a)}{\underline{\id}} &
    v &= \id(a)
  \end{align*}
\end{definition}

\noindent The assumption on the support of \(a\) implies that both \(u\) and
\(v\) are well-defined cells, while the two cells are parallel by the
computation of the source and target of \(\underline{\id}\). The cell acts as
a unitor, since it takes a composite over \(B\) where all top dimensional
cells are sent to identities to the identity of the composite.

\subsection{Filler cells}
We define a collection of coherence operations, that we call fillers,
generalising the associators,
unitors and interchangers. Those are operations in \(\omega\)\-categories, that
we describe again using cells over a globular pasting diagram. Recall that given
a diagram of Batanin trees \(f\colon \Pos{B}\to \bat\) indexed by a Batanin
tree \(B\), we may form the composite tree \(\mu^{\str}f\).

\begin{definition}
  Suppose that for each \(i \in \set{1,2}\), we are given
  \begin{itemize}
    \item a Batanin tree \(B_i\),
    \item morphisms \(f_{i} : \Pos{B_{i}} \to \bat\) such that
      \(\mu^{\str}(f_{1}) = \mu^{\str}(f_{2})\),
    \item full spheres \(A_i\) in \(\Pos{B_{i}}\)
    \item covering morphisms
      \(\sigma_{i} : \free \Pos{B_{i}} \to \free\Pos{\mu^{\str}(f_{i})}\) such
    that
    \begin{align*}
      \type(\sigma_{1})(A_{1}) &= \type(\sigma_{2})(A_{2})
    \end{align*}
  \end{itemize}
  we define the filler cell
  \[
    \filler(B_{i},f_{i},a_{i}\to b_{i},\sigma_{i}) = \coh{\mu^{\str}(f_{1})}
    {c_{1} \to c_{2}}{\id}
  \]
  where
  \begin{align*}
    c_{1} &= \coh{B_{1}}{A_1}{\sigma_{1}}
    & c_{2} &= \coh{B_{2}}{A_2}{\sigma_{2}}.
  \end{align*}
\end{definition}

\noindent The assumptions on \(A_i\) and \(\sigma_i\) ensure exactly that the
cells \(c_{1}\) and \(c_{2}\) are cell defined, parallel and covering the tree
\(\mu^{\str}(f_{i})\). By Proposition~\ref{prop:invertible-coh}, all the fillers
are all invertible cells.

\begin{example}
  The associator with source is three binary composed \(1\)\-cells, associated
  on the left and its target is three binary composed \(1\)\-cells, associated
  on the right can be obtained as a filler with:
  \begin{itemize}
  \item The tree \(B_{1} = \compcell{0}{D_{1}}{D_{1}}\), the map
    \(f_{1} = \pomap{\compcell{0}{D_{1}}{D_{1}}, D_{1}}\), the cells
    \(a_{1} = \var p\) and \(b_{1} = \var q\) where \(p,q\) are respectively the
    left- and right-most \(0\)\-positions of \(B_{1}\), and the morphism
    \(\sigma_{1} = \pomap{\unbcomp_{\compcell{0}{D_{1}}{D_{1}}},\id_{D_{1}}}\),
  \item The tree \(B_{2} = \compcell{0}{D_{1}}{D_{1}}\), the map
    \(f_{2} = \pomap{D_{1},\compcell{0}{D_{1}}{D_{1}}}\), the cells
    \(a_{2} = \var p\) and \(b_{2} = \var q\) where \(p,q\) are respectively the
    left- and right-most \(0\)\-positions of \(B_{2}\), and morphism
    \(\sigma_{2} = \pomap{\id_{D_{1}},\unbcomp_{\compcell{0}{D_{1}}{D_{1}}}}\).
  \end{itemize}
\end{example}
\begin{example}
  The unbiased unitor \(\unit(B,a)\) can also be obtained as a filler, by
  choosing:
  \begin{itemize}
  \item The tree \(B_{1} = B\), the map \(f_{1}\) associating to every position
    \(p\) the tree \(D_{\min(\dim p, \dim B -1)}\), the cells
    \(a_{1} = T(s^{B}_{d-1})(a)\) and \(a_{2} = T(t^{B}_{d-1})(a)\), and the map
    \(\sigma_{1} = \underline{\id}\).
  \item The tree \(B_{2} = D_{\dim B - 1}\), the map \(f_{2}\) associating to
    every position \(p\) the tree \(\bdry[\dim p] B\), the cells
    \(a_{2} = b_{2} = \var p\) where \(p\) is the unique maximal position of
    \(D_{\dim B - 1}\), and the map \(\sigma_{2}\) corresponding to the cell
    \(a\) via the Yoneda lemma.
  \end{itemize}
\end{example}

\subsection{Functorialisation of coherences}
Given a Batanin tree \(B\) of dimension \(d\) together with a set
\(X \subseteq \Pos[d]{B}\) of maximal positions of \(B\)
(c.f.~Definition~\ref{def:lmax-pos}), we define the functorialisation of the
tree \(B\) with respect to the set \(X\) by induction on \(B\), denoted
\(\fun{X}{B}\) as follows
\begin{align*}
  \fun{\emptyset}{B} &= B \\
  \fun{X}{(\bt{})}
  &= \bt{\bt{}}\\
  \fun{X}{(\bt{B_{1},\ldots,B_{n}})}
  &= \bt{\fun{(\inc_{1}^{-1}(X))}{B_{1}},\ldots,\fun{(\inc_{n}^{-1}(X))}{B_{n}}}
\end{align*}
Intuitively, this operation consists in selecting a set of leaves of the tree,
and growing one more branch on top of all the selected leaves. For instance,
consider the tree displayed on the left. The functorialisation of this tree with
respect to the set of locally maximal positions indicated as red produces
the tree represented on the right. The newly created branches are also displayed
in red, to improve legibility.
\[
  \begin{tikzcd}[column sep = 0.2em]
    \phantom{\bullet}\\
    {\color{red}\bullet} & \bullet &  & {\color{red} \bullet}  \\
    \bullet \ar[u,-]& & \bullet \ar[ul,-]\ar[ur,-] & \\
                         & \bullet \ar[ul,-]\ar[ur,-] & &
  \end{tikzcd}
  \qquad\rightsquigarrow\qquad
  \begin{tikzcd}[column sep = 0.2em]
    {\color{red} \bullet} & & & {\color{red} \bullet} \\
    {\color{red} \bullet} \ar[u,-,red]
                          & \bullet &  & {\color{red} \bullet} \ar[u,-,red]  \\
     \bullet\ar[u,-] & & \bullet \ar[ul,-]\ar[ur,-] & \\
    & \bullet \ar[ul,-]\ar[ur,-] & &
  \end{tikzcd}
\]
Consider for instance the Batanin tree
\(\Chain_{k} = \graft{0}{D_{1}}{\graft{0}{\ldots}{D_{1}}}\) and the position
\(f^{\Chain}_{i}\). Then the functorialisation is given by
\[
  \fun{f^{\Chain}_{i}}{\Chain_{k}} =
  \graft{0}{D_{1}}{\graft{0}{\ldots}{\graft{0}{D_{1}}
      {\graft{0}{D_{2}}{\graft{0}{D_{1}}{\graft{0}{\ldots}{D_{1}}}}}}},
\]
where in this expression, the disk \(D_{2}\) appears in the \(i\)-th position.

\begin{lemma}
  For a Batanin tree \(B\) of dimension \(d\), and a non-empty set \(X\) of
  maximal positions in \(B\), the dimension of the functorialisation is given by
  \(\dim(\fun{X}{B}) = d + 1\), and we have \(\bdry[d](\fun{X}{B}) = B\).
\end{lemma}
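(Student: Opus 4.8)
The plan is to prove both claims by induction on the structure of the Batanin tree $B$, following exactly the recursive definition of $\fun{X}{B}$. The key observation is that the three clauses of the definition partition into base cases and an inductive step, and the statement is compatible with each.

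First I would handle the base cases. When $X = \emptyset$ the statement is vacuous since $X$ is assumed non-empty; this clause is only needed to make the recursion total. When $B = \bt{}$ we have $d = 0$, and $X$ is necessarily the singleton containing the unique position of $\bt{}$; then $\fun{X}{B} = \bt{\bt{}} = \susp D_0 = D_1$, which has dimension $1 = d+1$, and $\bdry[0]{D_1} = \bt{} = B$ by Definition~\ref{def:bdry-tree}. For the inductive step, write $B = \bt{B_1,\ldots,B_n}$ so that $\fun{X}{B} = \bt{\fun{X_1}{B_1},\ldots,\fun{X_n}{B_n}}$ where $X_j = \inc_j^{-1}(X)$. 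Using the recursive description of positions of a wedge sum and the characterisation of maximal positions, each $X_j$ is a set of maximal positions of $B_j$, and since $X$ is non-empty at least one $X_j$ is non-empty. The dimension formula $\dim(\bt{C_1,\ldots,C_n}) = \max_j(\dim C_j + 1)$ then gives
\[
  \dim(\fun{X}{B}) = \max_j(\dim(\fun{X_j}{B_j}) + 1).
\]
For indices $j$ with $X_j \ne \emptyset$, the inductive hypothesis gives $\dim(\fun{X_j}{B_j}) = \dim B_j + 1$; for indices with $X_j = \emptyset$ we have $\fun{X_j}{B_j} = B_j$, contributing $\dim B_j + 1$. So the maximum is $\max_j(\dim B_j + 1) + 1 = d + 1$, using that some $X_j$ is non-empty and that this is precisely a branch realising the maximum (since $X$ consists of \emph{maximal} positions, hence $\dim X_j = \dim B_j = d - 1$ whenever $X_j \ne \emptyset$). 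For the boundary claim, apply Definition~\ref{def:bdry-tree}:
\[
  \bdry[d]{\fun{X}{B}} = \bt{\bdry[d-1]{\fun{X_1}{B_1}},\ldots,\bdry[d-1]{\fun{X_n}{B_n}}},
\]
and for each $j$ we need $\bdry[d-1]{\fun{X_j}{B_j}} = B_j$. When $X_j \ne \emptyset$ this is the inductive hypothesis (since $\dim(\fun{X_j}{B_j}) = \dim B_j + 1$ and $\dim B_j = d-1$, so $\bdry[\dim B_j]{\fun{X_j}{B_j}} = B_j$). When $X_j = \emptyset$ we need $\bdry[d-1]{B_j} = B_j$, which holds because $\dim B_j \le d - 1$ and the $k$-boundary is the identity on trees of dimension at most $k$ (a routine induction on Definition~\ref{def:bdry-tree}). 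Assembling these gives $\bdry[d]{\fun{X}{B}} = \bt{B_1,\ldots,B_n} = B$.

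The main obstacle I anticipate is the bookkeeping around the edge cases where a branch does not realise the dimension or receives no selected positions: one must be careful that $X_j = \emptyset$ does not break the dimension count, and that the hypothesis $\dim B_j = d - 1$ is available whenever $X_j \ne \emptyset$ — this uses that positions in $X$ are maximal in $B$, so a selected position in branch $B_j$ forces $\dim B_j = d - 1$. I would also need the small lemma that $\bdry[k]{C} = C$ when $\dim C \le k$, which is not stated explicitly but follows immediately by induction on $C$. Everything else is a direct unfolding of the recursive definitions, so the proof should be short.
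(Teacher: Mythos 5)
Your proof is correct and follows essentially the same route as the paper: structural induction on $B$, with the base case $\bt{}$, the case split on whether $\inc_j^{-1}(X)$ is empty, the use of maximality of the positions in $X$ to see that a branch receiving a selected position realises the dimension $d-1$, and the auxiliary fact that $\bdry[k]{C}=C$ when $\dim C\le k$. No substantive differences.
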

\begin{proof}
  We proceed by induction on the tree \(B\). If \(B = \bt{}\), then \(\dim B = 0\) and
  \(\dim ({\fun{X}{B}}) = \dim (D_{1}) = 1\), and moreover,
  \(\bdry[0]({\fun{X}{B}}) = B\). Otherwise, \(B = \bt{B_{1},\ldots,B_{n}}\) for \(n>0\), and then
  \(\dim B = 1+\max{(\dim B_{i})}\), while
  \[\dim(\fun{X}{B}) = 1+\max{(\dim(\fun{\inc_{i}^{-1}(X)}{B_{i}}))}.\] By
  induction, we have \(\dim(\fun{\inc_{i}^{-1}(X)}{B_{i}}) = 1 + \dim B_{i}\) if
  \(\inc_{i}^{-1}(X)\) is non-empty, and \(\dim B_{i}\) otherwise. Moreover,
  since \(X\) contains at least one maximal position of \(B\) there exists at
  least one \(i\) such that \(\inc_{i}^{-1}(X)\). We then have necessarily
  \(1 + \dim B_{i} = \dim B\), and thus \(\dim (\fun{X}{B}) = 1 + \dim B\).
  Moreover, if we let \(\dim B = d\), we have that
  \[\bdry[d](\fun{X}{B}) = \bt{\bdry[d-1](\fun{\inc_{1}^{-1}(X)}{B_{1}}),\ldots,
    \bdry[d-1](\fun{\inc_{n}^{-1}(X)}{B_{n}})}.\]
  If \(\inc_{i}^{-1}(X)\neq \emptyset\), then \(\dim B_i = d\) and by the
  induction, we have that \[\bdry[d-1](\fun{\inc_{i}^{-1}(X)}{B_{i}}) = B_{i}.\]
  Otherwise, we have that
  \(\fun{\inc_{i}^{-1}(X)}{B_{i}} = B_{i}\) is of dimension at most \(d-1\),
  from which the same equality holds. Hence, \(\bdry[d]{B} = B\).
\end{proof}
\begin{defi}
  Given a Batanin tree \(B\) with a set \(X\) of maximal positions, we
  define the globular set \(\Pos{B}\setminus X\) by letting
  \((\Pos{B}\setminus X)_{n} = \Pos[n]{B}\setminus X\), with the source and
  target maps the restriction of those of \(\Pos{B}\).
\end{defi}

\begin{lemma}\label{lemma:functorialisation-po}
  Given a Batanin tree \(B\) of dimension \(d\) with a set \(X\) of maximal
  positions, there exists a colimit cocone of the form
  \[
    \begin{tikzcd}
      &&  \Pos{B}\ar[ddr, "s^{\fun{X}{B}}_{d}"] \\
      & \coprod\limits_{x\in X} \disk{d}\ar[ur,"\pomap{x}"']
        \ar[dr,"\pomap{s^{D_{d+1}}_{d}}"] \\
      \Pos{B}\setminus X \ar[rruu, bend left, hook]\ar[rrdd, bend right, hook]
      && \coprod\limits_{x\in X} \disk{d+1}\ar[r] & \Pos{\fun{X}{B}} \\
      & \coprod\limits_{x\in X} \disk{d}\ar[dr,"\pomap{x}"]
        \ar[ur,"\pomap{t^{D_{d+1}}_{d}}"'] \\
      &&  \Pos{B}\ar[uur, "t^{\fun{X}{B}}_{d}"'] \\
    \end{tikzcd}
  \]
  The unnamed morphism picks the generators corresponding to the newly grown
  branches of the functorialised tree.
\end{lemma}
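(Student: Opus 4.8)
The plan is to argue by induction on the Batanin tree $B$, in the spirit of the proof of Proposition~\ref{prop:bin-comp-tree-po}. If $X=\emptyset$ then $\fun{X}{B}=B$, $d$ equals $\dim B$, and the stated cocone has both legs out of the copies of $\Pos{B}$ equal to the identity, so the statement is immediate; we therefore assume $X$ nonempty, as in the preceding lemma.

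For the base case $B=\bt{}$ the set $X$ is the singleton of the unique position, $d=0$, and $\fun{X}{B}=\bt{\bt{}}=D_1$. Here $\Pos{B}\setminus X=\emptyset$, so the diagram reduces to the span $\disk 0\xleftarrow{\,\pomap{x}\,}\disk 0\xrightarrow{\,s^{D_1}_0\,}\disk 1$ together with its mirror image on the target side. Since $\pomap{x}$ is the identity, a cocone under this diagram is exactly a single map out of $\disk 1$, so the colimit is $\disk 1\cong\Pos{D_1}=\Pos{\fun{X}{B}}$ with cocone legs $s^{D_1}_0$, $t^{D_1}_0$ and $\id_{\disk 1}$, as required.

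For the inductive step write $B=\bt{B_1,\dots,B_n}$ and $X_i=\inc_i^{-1}(X)$. A maximal position of $B$ in the $i$-th branch is exactly a maximal position of $B_i$, so for each $i$ either $X_i=\emptyset$, in which case $\fun{X_i}{B_i}=B_i$ and the associated instance of the diagram is a trivial colimit, or $\dim B_i=d-1$ and $X_i$ is a nonempty set of maximal positions of $B_i$, to which the induction hypothesis applies. From the recursion $\fun{X}{B}=\bt{\fun{X_1}{B_1},\dots,\fun{X_n}{B_n}}$ we obtain $\Pos{\fun{X}{B}}=\bigvee_{i=1}^{n}\susp\Pos{\fun{X_i}{B_i}}$. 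I would then suspend each colimit cocone furnished by the induction hypothesis and take the wedge sum of the results over $i$: since suspension and the wedge sum both preserve connected colimits (as recalled in the proof of Proposition~\ref{prop:bin-comp-tree-po}), and each diagram in sight is connected because the corresponding $X_i$ is nonempty, this again produces a colimit cocone, now with apex $\bigvee_i\susp\Pos{\fun{X_i}{B_i}}=\Pos{\fun{X}{B}}$.

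It remains to recognise this suspended-and-wedged diagram as the diagram of the statement for $(B,X)$. On most objects this is routine: $\bigvee_i\susp\Pos{B_i}=\Pos{B}$ by definition, $\susp\disk k\cong\disk{k+1}$ by Example~\ref{ex:disk-tree}, the suspension of a cosource or cotarget inclusion of disks is again such an inclusion directly from their recursive definition, and a cell-by-cell check gives $\bigvee_i\susp(\Pos{B_i}\setminus X_i)\cong\Pos{B}\setminus X$ using that $X=\coprod_i X_i$ sits in dimension $d$. The subtle point is the coproducts of disks: applying $\susp$ to a coproduct of disks does not return a coproduct of disks, since $\susp$ identifies all the source $0$-cells with one another and all the target $0$-cells with one another. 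This discrepancy is harmless in the colimit, because for $x$ in the $i$-th branch the $d$-disk over $x$ already maps into the summand $\susp\Pos{B_i}$ of $\Pos{B}$ by a map sending both of its $0$-cells to the two poles of that summand, so collapsing these cells in advance changes neither the colimit nor the cocone. Reading off the cocone legs of the identified diagram then yields $s^{\fun{X}{B}}_d$, $t^{\fun{X}{B}}_d$ and the map selecting the newly grown branches, completing the induction. I expect this last identification — in particular the bookkeeping for the coproduct-of-disks objects, their basepoints, and their gluing maps — to be the only real source of friction; as an alternative one could instead verify the universal property of the stated cocone directly, after establishing by a separate induction on $B$ that the positions of $\fun{X}{B}$ are those of $B$ together with, for each $x\in X$, one new $d$-position parallel to $x$ and one new $(d+1)$-position from $x$ to it.
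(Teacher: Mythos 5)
Your proof is correct and takes the same route the paper intends: the paper omits the argument entirely, stating only that "the proof of the lemma is by a straightforward induction on the tree $B$", and your induction — base case $\bt{}$, inductive step by suspending and wedging the cocones, with the observation that the failure of $\susp$ to preserve coproducts of disks is harmless because those disks' $0$-cells already land on the poles of the relevant wedge summand — is exactly that induction carried out in detail. The only cosmetic quibble is your remark that the diagrams are connected "because $X_i$ is nonempty": the indexing shape of the diagram is connected regardless of $X_i$, which is all that is needed for suspension and wedge sum to preserve the colimit.
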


The proof of the lemma is by a straightforward induction on the tree \(B\). This
gives a method for constructing a map \(\sigma : \Pos{\fun{X}{B}} \to Y\): Such
a map amounts to a pair of maps
\(\sigma_{-},\sigma_{+} : \Pos{B} \to Y\) which coincide on
\(\Pos{B}\setminus X\) as well as, for every \(x\in X\), a cell
\(f_{x} \in Y_{n+1}\) such that \(\src (f_{x}) = \sigma_{-}(x)\) and
\(\tgt(f_{x}) = \sigma_{+}(x)\).

\begin{remark}
  Lemma~\ref{lemma:functorialisation-po} gives a characterisation of the
  functorialisation without relying on the Batanin tree, hinting at a more
  general functorialisation operation valid on every globular set, or even every
  computad with respect to well-chosen generators. This construction has been
  studied by the first author in the setting of the type theory
  \catt~\cite{benjamin_type_2020}, and will not be useful for the purpose of
  this article.
\end{remark}

Having defined the functorialisation of a Batanin tree, we may define the
functorialisation of an operation \(\coh(B,A,\id)\) in a straightforward manner.
This is a special instance of the functorialisation of a cell, described by
the first author~\cite{benjamin_type_2020}.

\begin{definition}\label{def:functorialisaiton-coh}
  Given a Batanin tree \(B\) of dimension \(d\) together with a set of maximal
  positions \(X \subset \Pos{B}\), and two parallel cells
  \(a,b\in T\Pos{\bdry[d-1]{B}}\) that cover \(\bdry[d-1]{B}\), we define the
  functorialisation
  \[
    \fun{X}{(B,a\to b)} = \coh{\fun{X}{B}} {T(s^{\fun{X}{B}}_{d})(c) \to
      T(t^{\fun{X}{B}}_{d})(c)}{\id_{\Pos{\fun{X}{B}}}}
  \]
  where \(c = \coh{B}{a\to b}{\id_{\free \Pos{B}}}\) is covering \(T\Pos B\).
\end{definition}

\subsection{Chain reduction}

Before introducing our final operation on trees, that we call
\emph{chain reduction}, we need to introduce a family of trees that we call
\emph{chains}. Those are easily defined for \(k\in\N\) as the composites
\(\Chain_k = \graft{0}{D_1}{\graft{0}{\cdots}{D_1}}\), which correspond to the
composition of \(k\) consecutive \(1\)\-cells. Explicitly the globular set of
positions of \(\Chain_k\) is the following diagram
\[
  \begin{tikzcd}
    x^{\Chain}_{0} \ar[r,"f^{\Chain}_{1}"]& x^{\Chain}_{1} \ar[r,"f^{\Chain}_{2}"]
    & \ldots \ar[r,"f^{\Chain}_{k}"]& x^{\Chain}_{k}
  \end{tikzcd}
\]
We note that chains are precisely the trees of dimension at most \(1\). We can
then get higher dimensional chains \(\susp^n\Chain_d\) by using the suspension
operation. The tree
\(\susp^n\Chain_d = \graft{n}{D_{n+1}}{\graft{n}{\cdots}{D_{n+1}}}\) is the
tree consisting of \(k\) consecutive \((n+1)\)\-cells.

The \emph{chain reduction} of a Batanin tree \(B\) of dimension \(d\) is a new
tree \(\red{B}\) obtained by merging the chains of positions of dimension \(d\)
in \(B\) into a unique position. It is defined together with a morphism
\[\reduced_B\colon \free \Pos{\red{B}}\to \free\Pos{B}\] that sends the merged
position into the composite of the original chain. To define the chain reduction
of a tree, we define more generally for \(i \ge \dim B - 1\) a new tree
\(\red[i]{B}\) recursively as follows
\begin{align*}
  \red[i]{\bt{}} & = \bt{} \\
  \red[0]{\Chain_{k}} & = \bt{\bt{}} \\
  \red[i+1]{\bt{B_{1},\ldots,B_{n}}} & = \bt{\red[i]{B_{1}},\ldots,\red[i]{B_{n}}}.
\end{align*}
In these definitions, the first case takes precedence over the second one. It
follows that \(\red[i]{B} = B\) for \(i \ge \dim B\), so that the only newly
introduced operation is \(\red[\dim B-1]{B}\) which we will denote simply by
\(\red{B}\). For the morphism of computads \(\reduced_B\), we define again a
morphisms \(\reduced_{i,B} \colon \free\Pos{\red[i]{B}} \to \free\Pos{B}\)
recursively as follows:
\begin{align*}
  \reduced_{i,\bt{}}
  & = \id_{\free\disk{0}}\\
  \reduced_{0,\Chain_{k}}
  &= \unbcomp_{\Chain_{k}} \\
  \reduced_{i+1,\bt{B_{1},\ldots,B_{n}}}
  &= \pomap{\susp\reduced_{i,B_{1}},\ldots,\susp\reduced_{i,B_{n}}}.
\end{align*}
Those morphisms are again identities for \(i\ge \dim B\), and we denote simply
\(\reduced_{\dim B - 1,B}\) by \(\reduced_B\). Here,
\(\unbcomp_{\Chain_{k}}\) is the morphism \(\free \disk{1} \to \free C_{k}\),
which via the adjunction \(\free \dashv \cell\) corresponds to the map
\(\disk{1} \to TC_{k}\) given by the Yoneda lemma on the cell with the same
name. We note also that the last
case uses the equality \(\susp\free = \free\susp\) proven
in~\cite{benjamin_opposites_2024}, as well as the fact that \(\free\) is left
adjoint and thus preserves colimits.

\begin{lemma}\label{lemma:chain-reduce-map}
  Consider a Batanin tree \(B\) of dimension \(d\). Then, for every maximal
  position \(p\) of \(\red{B}\), there exists a natural number
  \(\length_{B}(p)\) and a map \(\chain_{B,p}\) satisfying the following:
  \begin{align*}
    \chain_{B,p}
    &\colon \Pos{\susp^{d-1}\Chain_{\length_{B}(p)}} \to \Pos{B}\\
    \reduced_{B}(p)
    &= T(\chain_{B,p})(\unbcomp_{\susp^{d-1}\Chain_{\length_{B} (p)}}).
  \end{align*}
\end{lemma}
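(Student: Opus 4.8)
The plan is to prove, by structural induction on $B$, the following mild strengthening: for every Batanin tree $B$ of dimension $d$, every $i \ge d - 1$, and every maximal position $p$ of $\red[i]{B}$, there are a natural number $\length_{i,B}(p)$ and a morphism of globular sets $\chain_{i,B,p}\colon\Pos{\susp^{d-1}\Chain_{\length_{i,B}(p)}}\to\Pos{B}$ such that $\reduced_{i,B}(p) = T(\chain_{i,B,p})(\unbcomp_{\susp^{d-1}\Chain_{\length_{i,B}(p)}})$; the lemma is the case $i = d - 1$. The recursion defining $\red[i]{-}$ clearly preserves dimension, so maximal positions of $\red[i]{B}$ have dimension $d$; when $d = 0$ the expression $\susp^{-1}\Chain_1$ is read as $\disk0$, a convention that will only be invoked at the leaves of the recursion.

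There are two base cases, matching the two base cases of the recursion for $\red[i]{-}$. If $B = \bt{}$, then $\red[i]{B} = \bt{}$, its unique position $p$ is maximal, and $\reduced_{i,B} = \id_{\free\disk0}$ sends $p$ to $\var p$, so $\length_{i,B}(p) = 1$ and $\chain_{i,B,p} = \id_{\disk0}$ work (here $\susp^{d-1}\Chain_1 = \disk0$ and $\unbcomp_{\disk0} = \var p$). If $B = \Chain_k$ and $i = 0$, then $\red[0]{\Chain_k} = \disk1$ has a unique maximal position $p$, and $\reduced_{0,\Chain_k} = \unbcomp_{\Chain_k}$ sends it to the unbiased composite $\unbcomp_{\Chain_k}$; since here $d - 1 = 0$, the choices $\length_{0,\Chain_k}(p) = k$ and $\chain_{0,\Chain_k,p} = \id_{\Pos{\Chain_k}}$ work.

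For the inductive step, let $B = \bt{B_1,\dots,B_n}$ with $n \ge 1$, and suppose we are not in the second base case; then necessarily $i \ge 1$, so $\red[i]{B} = \bt{\red[i-1]{B_1},\dots,\red[i-1]{B_n}}$ and $\reduced_{i,B} = \pomap{\susp\reduced_{i-1,B_1},\dots,\susp\reduced_{i-1,B_n}}$, using $\free\susp = \susp\free$. Since $\Pos{\red[i]{B}} = \bigvee_j\susp\Pos{\red[i-1]{B_j}}$ and $\dim\red[i-1]{B_j} = \dim B_j$, a position of $\red[i]{B}$ is maximal exactly when it has the form $p = \inc_j(\susp q)$ for a unique index $j$ with $\dim B_j = d - 1$ and a unique maximal position $q$ of $\red[i-1]{B_j}$, where $\inc_j$ denotes the $j$-th wedge inclusion. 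Applying the induction hypothesis to $B_j$ at the index $i - 1 \ge \dim B_j - 1$ gives $\ell := \length_{i-1,B_j}(q)$ and $\chain_{i-1,B_j,q}$, and I set $\length_{i,B}(p) := \ell$ and $\chain_{i,B,p} := \inc_j\circ\susp\chain_{i-1,B_j,q}$, a morphism $\Pos{\susp^{d-1}\Chain_\ell} = \susp\Pos{\susp^{\dim B_j-1}\Chain_\ell}\to\susp\Pos{B_j}\to\Pos{B}$. Unfolding $\reduced_{i,B}(p)$ along the definition of the induced wedge morphism and the equality $\susp^{\cell}(\var v) = \var v$ gives $\reduced_{i,B}(p) = T(\inc_j)(\susp^{\cell}(\reduced_{i-1,B_j}(q)))$; substituting the induction hypothesis and using that $\susp^{\cell}$ is compatible with applying operations, in the sense that $\susp^{\cell}(T(f)(c)) = T(\susp f)(\susp^{\cell}(c))$ — a consequence of the naturality of $\susp^{\cell}$ together with $\free\susp = \susp\free$ — and that suspension carries the unbiased composition over any tree to the unbiased composition over its suspension (Section~\ref{subsec:suspension}), we obtain $\reduced_{i,B}(p) = T(\inc_j\circ\susp\chain_{i-1,B_j,q})(\unbcomp_{\susp^{\dim B_j}\Chain_\ell}) = T(\chain_{i,B,p})(\unbcomp_{\susp^{d-1}\Chain_\ell})$, as required.

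The argument is almost entirely bookkeeping; the only part needing care is keeping track of the canonical identifications $\Pos{\susp B} = \susp\Pos{B}$ and $\free\susp = \susp\free$, of the naturality square for $\susp^{\cell}$, and of the fact that suspension preserves unbiased composites — all of which are already established. The combinatorial content, namely that merging a chain of top-dimensional positions yields a single position whose image under $\reduced_B$ is the unbiased composite of that chain, is carried entirely by the $\Chain_k$ base case and then transported to all trees by the suspension step.
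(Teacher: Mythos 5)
Your proof is correct and takes essentially the same approach as the paper: the base case is the chain \(\Chain_k\) (with \(\length=k\) and \(\chain\) the identity), and the inductive step writes a maximal position as \(\inc_j(\susp q)\), sets \(\chain_{B,p}=\inc_j\circ\susp\chain_{B_j,q}\), and verifies the equation via the compatibility of \(\susp^{\cell}\) with \(T\) and with unbiased composites. The only cosmetic difference is that you generalise over all indices \(i\ge d-1\) in a structural induction on \(B\), where the paper inducts on the dimension \(d\) and so only ever needs the case \(i=\dim B_j-1\) on the branches of top dimension; this is harmless.
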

\begin{proof}
  For the sake of simplicity, we often omit the index \(B\) in \(\length\) and
  \(\chain\). We prove this result by induction on \(d\): If \(d \le 1\), then by
  definition \(B = \Chain_{k}\) for unique \(k\), so we may let
  \(\length(p) = k\) and \(\chain_{p} = \id_{\Pos{B}}\).
  So it suffices to prove the result for
  \(d > 1\), assuming it holds for trees of dimension lower than \(d\). In this
  case, we write \(B = \bt{B_{1},\ldots,B_{l}}\). By induction, considering
  \(j\) such that \(\dim(B_{j}) = \dim(B)-1\), for every maximal position
  \(p_{j}\) of \(B_{j}\), we get an number \(\length_{B_{j}}(p_{j}) \in\N_{>0}\)
  and a map
  \begin{align*}
    \chain_{p_{j}}
    &\colon \free \Pos{\susp^{d-2}\Chain_{\length(p_{j})}} \to \free\Pos{B_{j}}\\
    \reduced_{B}(p_{j})
    &= T(\chain_{p_{j}})(\unbcomp_{\susp^{d-2}\Chain_{\length (p_{j})}}).
  \end{align*}
  Consider the canonical map \(\inc_{j} : \susp\Pos{B_{i}}\to \Pos{B}\), then
  for \(p = \inc_{j}(\susp p_{j})\), we let
  \(\length_{B}(p) = \length_{B_{j}}(p_{j})\) together with the map
  \(\chain_{p}\) to be the following composite
  \[
    \chain_{p} = \inc_{j} \circ \susp\chain_{p_{j}}.
  \]
  For this definition, we use the equality
  \(\susp\circ\PosOp = \PosOp\circ\susp\) (c.f.~\cite{benjamin_opposites_2024}).
  We have the following equation:
  \begin{align*}
    \reduced_{B}(p)
    &=
      \pomap{\susp\reduced_{i,B_{1}},\ldots,\susp\reduced_{i,B_{n}}}
      (\inc_{j}(\susp p_{j})) \\
    &= T(\inc_{j})\susp^{\cell}(\reduced_{i,B_{j}}(p_{j})) \\
    &= T(\inc_{j}\circ (\susp \chain_{p_{j}}))
      (\unbcomp_{\susp^{d-1}\Chain_{\length(p_{j})}})\\
    &= T(\chain_{p})(\unbcomp_{\susp^{d-1}\Chain_{\length(p)}}). \tag*{\qedhere}
  \end{align*}
\end{proof}

\noindent
Taking the reduction of a Batanin tree does not change its boundary. This
equality is compatible with the source and target inclusion of the boundary in
the pasting scheme, as stated by the following result.
\begin{lemma}
  \label{lemma:bdry-reduction}
  Given a Batanin tree \(B\) of dimension \(d\), we
  have the equality \(\bdry[d-1]\red{B} = \bdry B\) and the following
  squares commute
  \begin{align*}
    \begin{tikzcd}[ampersand replacement = \&]
      \free\Pos{\bdry[d-1](\red{B})}
      \ar[d,equal]\ar[r,"\free s^{\red{B}}_{d-1}"]
      \& \free\Pos{\red{B}} \ar[d,"\reduced_{B}"]\\
      \free \Pos{\bdry[d-1] B}\ar[r,"\free s^{B}_{d-1}"] \& \free\Pos{B}
    \end{tikzcd}
    \\
    \begin{tikzcd}[ampersand replacement = \&]
      \free\Pos{\bdry[d-1](\red{B})}
      \ar[d,equal]\ar[r,"\free t^{\red{B}}_{d-1}"]
      \& \free\Pos{\red{B}} \ar[d,"\reduced_{B}"]\\
      \free \Pos{\bdry[d-1] B}\ar[r,"\free t^{B}_{d-1}"] \& \free\Pos{B}
    \end{tikzcd}
  \end{align*}
\end{lemma}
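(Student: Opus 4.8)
The plan is to prove both the equality of trees $\bdry[d-1]\red{B} = \bdry B$ and the commutativity of the two squares simultaneously, by induction on the dimension $d$ of $B$, following exactly the recursive structure used to define $\red{B}$ and $\reduced_B$. The base cases are $d \le 1$, where the tree is $B = \Chain_k$ for a unique $k$. When $k = 0$, i.e.\ $B = \bt{}$, the reduction is the identity and everything is trivial. When $d = 1$, we have $\red[0]{\Chain_k} = \bt{\bt{}} = D_1$ and $\reduced_{0,\Chain_k} = \unbcomp_{\Chain_k}$; here $\bdry[0]\red{\Chain_k} = \bt{} = \bdry[0]\Chain_k$, and the squares assert precisely that the unbiased composition $\unbcomp_{\Chain_k} \colon \free\Pos{D_1} \to \free\Pos{\Chain_k}$ sends the source (resp.\ target) $0$-position of $D_1$ to the leftmost $x^{\Chain}_0$ (resp.\ rightmost $x^{\Chain}_k$) $0$-position of $\Chain_k$. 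This follows from the definition of the source and target of an unbiased composite, together with the formula $\src_0(\Chain_k, f) = (\bt{}, f\circ s^{\Chain_k}_0)$ from Definition~\ref{def:free-strict-cat}.

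For the inductive step, write $B = \bt{B_1, \ldots, B_n}$ with $\dim B = d > 1$, so each $\dim B_i \le d - 1$, and at least one $B_j$ has $\dim B_j = d - 1$. From the recursive definition, $\red{B} = \bt{\red[d-2]{B_1}, \ldots, \red[d-2]{B_n}}$ (using that $\red[d-2]{B_i} = B_i$ whenever $\dim B_i < d - 1$, since then $d - 2 \ge \dim B_i$), and hence
\[
  \bdry[d-1]\red{B} = \bt{\bdry[d-2]\red[d-2]{B_1}, \ldots, \bdry[d-2]\red[d-2]{B_n}}.
\]
By the induction hypothesis applied to each $B_i$ (with dimension at most $d-1$), we have $\bdry[d-2]\red[d-2]{B_i} = \bdry[d-2]{B_i}$, so the right-hand side equals $\bt{\bdry[d-2]{B_1}, \ldots, \bdry[d-2]{B_n}} = \bdry[d-1]{B}$ by Definition~\ref{def:bdry-tree}, giving the tree equality. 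For the squares, $\reduced_B = \pomap{\susp\reduced_{d-2,B_1}, \ldots, \susp\reduced_{d-2,B_n}}$ and, by the recursive formula for $s^B_{d-1}$, we have $s^{\red{B}}_{d-1} = \bigvee_i \susp\, s^{\red[d-2]{B_i}}_{d-2}$ and similarly for $B$ itself. Since $\free$ preserves the wedge-sum colimits and commutes with suspension, the outer square decomposes as a wedge of suspensions of the squares
\[
  \begin{tikzcd}[ampersand replacement=\&]
    \free\Pos{\bdry[d-2]\red[d-2]{B_i}} \ar[d,equal] \ar[r,"\free s^{\red[d-2]{B_i}}_{d-2}"] \& \free\Pos{\red[d-2]{B_i}} \ar[d,"\reduced_{d-2,B_i}"] \\
    \free\Pos{\bdry[d-2]{B_i}} \ar[r,"\free s^{B_i}_{d-2}"] \& \free\Pos{B_i}
  \end{tikzcd}
\]
which commute by the induction hypothesis; the target square is handled identically. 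I would carry this out by first recording the precise generalised statement for $\reduced_{i,B}$ with $i \ge \dim B - 1$, so that the induction has enough strength, and then verifying that suspension and the wedge sum are compatible with the boundary inclusions in the way just described.

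The main obstacle is bookkeeping rather than conceptual: one must be careful that the recursion on $\red[i]{-}$ with the ``first case takes precedence'' convention interacts correctly with the boundary operator, in particular that $\red[d-2]{B_i} = B_i$ for the low-dimensional branches so that the induction hypothesis is actually applicable to each $B_i$ (whose dimension may be strictly less than $d-1$). One also needs the already-established facts $\susp\free = \free\susp$ and $\susp\circ\PosOp = \PosOp\circ\susp$ from~\cite{benjamin_opposites_2024}, together with the recursive formulae for $s^B_k$ and $t^B_k$, to see that pre- and post-composing the wedge of suspensions with the boundary inclusions produces the claimed commuting square. Everything else is a routine unwinding of the definitions of $\red{-}$, $\reduced_{-}$, and $\bdry[-]{-}$.
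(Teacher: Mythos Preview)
Your proposal is correct and follows essentially the same route as the paper's proof: both argue by induction on the tree, treat the cases \(B=\bt{}\) and \(B=\Chain_k\) with \(i=0\) as base cases (the latter reducing to the source/target computation for \(\unbcomp_{\Chain_k}\)), and handle the inductive step by decomposing the square into a wedge of suspensions of the corresponding squares for the branches. Your observation that one should state the result for \(\reduced_{i,B}\) with \(i\ge\dim B-1\) so that the induction goes through uniformly over all branches (including those of strictly smaller dimension, where \(\reduced_{i-1,B_i}\) is the identity) is exactly how the paper sets up its induction.
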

\begin{proof}
  We will show by induction that \(\partial_i \red[i]{B} = \red[i]{B}\) for all
  \(i\ge \dim B-1\), and the corresponding diagrams commute. The case where
  \(i \ge \dim B\) is trivial with both sides equal to \(B\), so we let
  \(i = \dim B -1\). The case where \(B = \bt{}\) is then vacuously true. In the
  case where \(B = \Chain_{k}\) for some \(k\) and \(i = 0\), we have
  \(\bdry[0]{\Chain_{k}} = \bt{} = \bdry[0]{\bt{\bt{}}}\). Moreover, we have
  \[
    \reduced_{0,\Chain_{k}}\circ \free(s^{D_{1}}_{0})
    = \src(\unbcomp_{\Chain_{k}})
    = \free(s^{\Chain_{k}}_{0}).
  \]
  Finally, for \(i > 0\), we have \(B = \bt{B_{1},\ldots,B_{n}}\), and by
  induction, \(\bdry[i-1]{B_{k}} = \bdry[i-1]{\red[i-1]B_{k}}\), which imply the
  equality \(\bdry[i]{B} = \bdry[i]{\red[i]{B}}\). Moreover, we have that
  \begin{align*}
    \reduced_{i,B}\circ &\free(s^{\red[i]{B}}_{i}) \\
    &= \pomap{\susp(\reduced_{i-1,B_{1}}\circ \free(s^{\red[i-1]B_{1}}_{i-1})),
      \ldots,\susp\reduced_{i-1,B_{n}} \free(s^{\red[i-1]B_{n}}_{i-1})} \\
    &= \pomap{\susp \free(s^{B_{1}}_{i-1}),\ldots \susp \free(s^{B_{n}}_{i-1})}
    \\
    &= \free(s^{B}_{i}) \tag*{\qedhere}
  \end{align*}
\end{proof}

\noindent
If two Batanin trees \(B\) and \(B'\) have the same chain reduction, then they
must be of the same dimension and composable. The following lemma identifies the
reduction of their composite, as well as the chain maps.

\begin{lemma}\label{lemma:red-comp}
  Given two Batanin trees \(B\), \(B'\) of dimension \(d\) such that
  \(\red{B} = \red{B'}\), we have \(\red{(\compcell{d-1}{B}{B'})} = \red{B}\).
  Moreover, for every maximal position \(p \in \Pos[d]{\red{B}}\), we have that
  \[\length_{\graft{d-1}{B}{B'}} (p) = \length_{B}(p) + \length_{B'}(p)\]
  and that the following diagrams commute:
  \begin{align*}
    \begin{tikzcd}[ampersand replacement = \&]
      \susp^{d-1}\Pos{\Chain_{\length_{B}(p)}}
      \ar[r,"\chain_{B,p}"]
      \ar[d,hook,"\susp^{d-1} \inc^-"']
      \& \Pos{B}\ar[d,"\inc^-"] \\
      \susp^{d-1}\Pos{\Chain_{\length_{\graft{d-1}{B}{B'}}(p)}}\ar[r,"\chain_{p}"']
      \&\Pos{\graft{d-1}{B}{B'}}
    \end{tikzcd}
    \\
    \begin{tikzcd}[ampersand replacement = \&]
      \susp^{d-1}\Pos{\Chain_{\length_{B'}(p)}}
      \ar[r,"\chain_{B',p}"]
      \ar[d,hook,"\susp^{d-1} \inc^+"']
      \& \Pos{B}\ar[d,"\inc^+"] \\
      \susp^{d-1}\Pos{\Chain_{\length_{\graft{d-1}{B}{B'}}(p)}}\ar[r,"\chain_{p}"']
      \&\Pos{\graft{d-1}{B}{B'}}
    \end{tikzcd}
  \end{align*}
  for \(\inc^\pm\) the pushout inclusions associated to
  \(\Chain_{k+l} = \graft{0}{\Chain_{k}}{\Chain_{l}}\).
\end{lemma}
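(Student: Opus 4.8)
The plan is to prove all three claims simultaneously by induction on the dimension $d$, following the same recursive pattern used to define $\red{B}$, $\reduced_B$, $\length_B$, and $\chain_{B,p}$. For the base case $d \le 1$, both $B$ and $B'$ equal some $\Chain_k$ and $\Chain_{k'}$ respectively, and the hypothesis $\red{B}=\red{B'}$ forces $\bt{\bt{}}=\bt{\bt{}}$, which is no constraint; here $\graft{0}{B}{B'}=\Chain_{k+k'}$, its reduction is $\bt{\bt{}}=\red B$, and since $\chain_{B,p}$, $\chain_{B',p}$, $\chain_{p}$ are all identities while $\length$ values are $k$, $k'$, $k+k'$, the two squares become the defining pushout squares for $\Chain_{k+k'}=\graft{0}{\Chain_k}{\Chain_{k'}}$. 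For the inductive step $d>1$, we write $B=\bt{B_1,\dots,B_n}$ and $B'=\bt{B_1',\dots,B_n'}$ (same length, since they share a $(d-1)$-boundary). The hypothesis $\red{B}=\red{B'}$ unwinds to $\red[d-2]{B_i}=\red[d-2]{B_i'}$ for each $i$, so the inductive hypothesis applies to each pair $(B_i, B_i')$ at dimension $d-1$.

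The key steps are then: first, from $\red[d-2]{B_i}=\red[d-2]{B_i'}$ and $\graft{d-1}{B}{B'}=\bt{\graft{d-2}{B_1}{B_1'},\dots,\graft{d-2}{B_n}{B_n'}}$ (Definition~\ref{def:bin-comp-tree}), compute $\red{(\graft{d-1}{B}{B'})}=\bt{\red[d-2]{(\graft{d-2}{B_1}{B_1'})},\dots}$ and apply the inductive reduction-of-composite equality componentwise to get $\bt{\red[d-2]{B_1},\dots,\red[d-2]{B_n}}=\red{B}$. Second, for a maximal position $p$ of $\red B$, we have $p=\inc_j(\susp p_j)$ for the unique $j$ with $\dim B_j=d-1$ and $p_j$ maximal in $\red{B_j}$; then $\length_{\graft{d-1}{B}{B'}}(p)=\length_{\graft{d-2}{B_j}{B_j'}}(p_j)$ by the recursive definition of $\length$, which equals $\length_{B_j}(p_j)+\length_{B_j'}(p_j)=\length_B(p)+\length_{B'}(p)$ by the inductive hypothesis and the recursive definition of $\length$. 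Third, for the squares: $\chain_{\graft{d-1}{B}{B'},p}=\inc_j\circ\susp\chain_{\graft{d-2}{B_j}{B_j'},p_j}$ by Lemma~\ref{lemma:chain-reduce-map}'s recursion, so applying $\susp$ to the inductive square and postcomposing with $\inc_j$ reduces the claim to compatibility of the pushout inclusions $\inc^{\pm}$ with the wedge-sum inclusions $\inc_j$ and suspension — that is, the commuting square relating $\inc_{d-1,B,B'}^{-}\circ\inc_j^{B}$ with $\inc_j^{\graft{d-1}{B}{B'}}\circ\susp\inc_{d-2,B_j,B_j'}^{-}$, which follows from the recursive definition of the pushout inclusions in Proposition~\ref{prop:bin-comp-tree-po} (namely $\inc_{d-1,B,B'}^{\pm}=\bigvee_i\susp\inc_{d-2,B_i,B_i'}^{\pm}$) together with the functoriality of $\susp$ on $\PosOp$ used throughout.

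The main obstacle I expect is bookkeeping the interaction of three simultaneous recursions — the wedge-sum/suspension decomposition of positions, the pushout inclusions $\inc^{\pm}_{k,B,B'}$ of Proposition~\ref{prop:bin-comp-tree-po}, and the chain maps $\chain_{B,p}$ — and checking that the various "obvious" naturality squares (suspension commuting with $\PosOp$, wedge inclusions commuting with the boundary pushout inclusions) compose correctly. None of these is deep, but getting the indices and the direction of each inclusion right in the commuting-square verification is delicate; this is where I would be most careful, essentially reducing everything to the single identity $\inc_j\circ\susp\inc^{\pm}_{d-2,B_j,B_j'}=\inc^{\pm}_{d-1,B,B'}\circ\inc_j$ which holds by unfolding the definition $\inc^{\pm}_{d-1,B,B'}=\bigvee_i\susp\inc^{\pm}_{d-2,B_i,B_i'}$ and the universal property of the wedge sum.
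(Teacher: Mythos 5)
Your proposal is correct and follows essentially the same route as the paper: induction on the dimension, with the base case handled by the identity \(\Chain_{k+l}=\graft{0}{\Chain_k}{\Chain_l}\) and trivial chain maps, and the inductive step decomposing \(p=\inc_j(\susp p_j)\) and applying the inductive hypothesis componentwise. The key identity you isolate, \(\inc_j\circ\susp\inc^{\pm}_{d-2,B_j,B_j'}=\inc^{\pm}_{d-1,B,B'}\circ\inc_j\), is exactly the step the paper uses (implicitly, via the recursive definition \(\inc^{\pm}_{k+1,B,B'}=\bigvee_i\susp\inc^{\pm}_{k,B_i,B_i'}\)) in its chain of equalities for the commuting square.
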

\begin{proof}
  We proceed by induction on the dimension of \(B\) and \(B'\). If both trees
  are of dimension at most \(1\), then the identity
  \(\Chain_{k+l} = \graft{0}{\Chain_{k}}{\Chain_{l}}\) implies the result on
  length, the maps \(\chain_{p}\) are identity maps. If \(B\) and \(B'\) are of
  dimension \(d+1\), then we write \(B = \bt{B_{1},\ldots,B_{n}}\) and
  \(B' = \bt{B'_{1},\ldots,B'_{m}}\). The condition \(\red{B} = \red{B'}\)
  implies that \(m = n\) and that for every \(i\),
  \(\red{B_{i}} = \red{B'_{i}}\). Then, given a maximal position
  \(p\in \Pos{\red{B}}\), there exists \(p_{i} \in \Pos{\red{B_{i}}}\) such that
  \(\inc_{i}(p_{i}) = p\). We then have by induction
  \begin{align*}
    \length_{\graft{d}{B}{B'}}(p)
    &= \length_{\graft{d+1}{B_{i}}{B'_{i}}}(p_{i})\\
    &= \length_{B_{i}}(p_{i}) + \length_{B'_{i}}(p_{i}) \\
    &= \length_{B}(p) + \length_{B'}(p).
  \end{align*}
  Moreover, we can prove the first square by induction as follows:
  \begin{align*}
    \chain_{\graft{d}{B}{B'},p} \circ\susp^{d}{\inc^-}
    &= \inc_{i} \circ\susp(\chain_{\graft{d}{B_{i}}{B'_{i}},p_{i}}
      \circ \susp^{d-1}{\inc^-})\\
    &= \inc_{i} \circ\susp(\inc^-\circ \chain_{B_{i},p_{i}})\\
    &= \inc^-\circ\inc_{i}\circ \susp \chain_{B_{i},p_{i}} \\
    &= \inc^- \circ \chain_{B,p}.
  \end{align*}
  The commutativity of the other square is similar.
\end{proof}

Using those lemmas, we may now define the chain reduction of a composite cell
in a computad. Suppose that a computad \(C\) is given together with a cell
\[
  c = \coh{B}{A}{\sigma} \in \cell_d(C)
\]
where \(d = \dim B\). As explained in Remark~\ref{rmk:alt-fullness} that implies
that \[A = T(s^{B}_{d-1})(a) \to T(t^{B}_{d-1})(b)\]
for some covering cells \(a,b\in (T\Pos{\partial_{d-1}B})_{d-1}\).
Using Lemma~\ref{lemma:bdry-reduction}, we define the chain reduction of \(c\)
to be
\[
  \red{c} = \coh{\red{B}}{s^{\red{B}}_{d-1}(a) \to
    t^{\red{B}}_{d-1}(b)}{\sigma\circ\reduced_{B}} \in \cell(C).
\]
This cell defines a particular biassing of \(c\), in the sense that it is a cell
which is weakly equivalent to the cell \(c\).
This can be seen by constructing a filler
\[
  \assoc(c) = \filler \left(B_{i},f_{i},a_{i} \to b_{i}, \sigma_{i}\right)
  \colon c\to \red{c},
\]
where \(B_{1} = B\), \(f_{1}\) is the map associating to \(p\) the disc
\(D^{\dim p}\), \(a_{1} = T(s^{B}_{d-1})(a)\), \(b_{1} = T(t^{B}_{d-1})(b)\) and
\(\sigma_{1} = \sigma\), while \(B_{2} = \red{B}\), \(f_{2}\) is the map
associating to every maximal position \(p\) the tree
\(\susp^{d-1}\Chain_{\length (p)}\), \(a_{2} = T(s^{\red B}_{d-1})(a)\),
\(b_{2} = T(t^{\red B}_{d-1})(b)\) and \(\sigma_{2} = \sigma\circ\reduced_{B}\).
This filler is an invertible cell whose source is \(c\) and whose target is
\(\red{c}\).



\section{Composite of invertible cells}
\label{sec:inverse-composite}
Our aim in this section is to show that a cell \(c\) obtained as a composite of
invertible cells is invertible. We achieve this by constructing explicitly the
inverse \(\inv{c}\) and the witnesses \(\unit_{c}, \counit_{c}\). Our strategy
for constructing these cells can be illustrated on a simple example: Consider
the following composite cell \(c\) displayed on the left-hand side (with
\(a,b,c\) invertible in \(\wcat X\)), the inverse that we construct is displayed
on the right hand-side
\begin{align*}
  c &=
  \begin{tikzcd}[ampersand replacement = \&]
    \bullet \ar[r,bend left=50]\ar[r,bend right=50]\ar[r, bend left = 25,
    phantom, "\Downarrow_{a}"]\ar[r, bend right = 25,
    phantom, "\Downarrow_{b}"]\ar[r]
    \& \bullet\ar[r, bend left]\ar[r, bend right]\ar[r,phantom,"\Downarrow_{c}"]
    \& \bullet
  \end{tikzcd}
  & \inv{c} &=
  \begin{tikzcd}[ampersand replacement = \&]
    \bullet \ar[r,bend left=50]\ar[r,bend right=50]\ar[r, bend left = 25,
    phantom, "\Downarrow_{\inv{b}}"]\ar[r, bend right = 25,
    phantom, "\Downarrow_{\inv{a}}"]\ar[r]
    \& \bullet\ar[r, bend left]\ar[r, bend right]
    \ar[r,phantom,"\Downarrow_{\inv{c}}"]
    \& \bullet
  \end{tikzcd}
\end{align*}
We note that the order of \(a\) and \(b\) must be swapped, so \(\inv c\) must be
a composite over the opposite pasting diagram. Defining the cell \(\unit_{x}\)
requires composing
\(\unit_{a}\), \(\unit_{b}\) and \(\unit_{c}\) together in order to cancel each
of the cells with its inverse. This can be done in multiple ways, as long as
\(b\) is cancelled before \(a\). In order to get a systematic scheme, we define
the cancellation of the composite of \(a\)
and \(b\) with the composite of \(b\) and \(a\) as an intermediate step, called
a \emph{telescope} below, and we
then perform the cancellation of this composite in parallel with the
cancellation of \(c\) in an unbiased way. In general, we cancel the
composition of maximal cells in codimension \(1\) first and then proceed in
an unbiased way.

\subsection{Pointwise inverse of a morphism}

Out of a morphism out of a globular pasting diagram that sends maximal positions
to invertible cells, we may derive a new morphism out of the opposite pasting
diagram by inverting the images of the maximal positions. This new morphism will
be useful for constructing the inverse of a composite of invertible cells, as
illustrated in the example at the beginning of this section. Existence and
uniqueness of such a morphism is guaranteed by the following lemma.

\begin{lemma}
  \label{lemma:inverse-comp-sub}
  Let \(X\) be a globular set, \(d > 0\) a positive natural number and \(B\) a
  Batanin tree of dimension at most \(d\). Then \(\op_{\set{d}}B = B\), and
  for every morphism of globular sets \(\sigma \colon \Pos{B}\to X\)
  and every family of cells \(\inv{\sigma(p)}\),
  indexed by \(p\in \Pos[d]{B}\), whose source and target are those of
  \(\sigma(p)\) swapped, there exists a well-defined morphism of globular sets
  \(\bar{\sigma} \colon \Pos{B}\to X\) sending a position
  \(q\in \Pos[k]{B}\) to
  \[
    \bar{\sigma}(q) = \begin{cases}
      \inv{\sigma(\op^{B}_{\set{d}}(q))} & \text{if } k = d, \\
      \sigma(\op^{B}_{\set{d}}(q)) & \text{if } k < d,
    \end{cases}
  \]
  where \(\op^B_{\set{d}}\colon
  \Pos{B}= \Pos{\op_{\set{d}} B} \to \op_{\set{d}}\Pos{B}\)
  is the isomorphism described in Section~\ref{subsec:suspension}. Moreover,
  the source and target of \(\sigma\) and \(\bar{\sigma}\) are swapped, in that
  \begin{align*}
    \bar{\sigma} \circ s_{d-1}^B &= \sigma \circ t_{d-1}^B &
    \bar{\sigma} \circ t_{d-1}^B &= \sigma \circ s_{d-1}^B
  \end{align*}
\end{lemma}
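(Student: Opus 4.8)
The plan is to realise \(\bar\sigma\) as a composite of morphisms that are already available, rather than verifying the globularity relations for the explicit case split by hand. First I would dispose of the routine facts. Reversing the source and target of \(n\)\-cells for \(n\in\set{d}\) changes neither the combinatorial structure of a Batanin tree of dimension at most \(d\) nor any of its positions of dimension below \(d\), so a straightforward induction on \(B\) yields \(\op_{\set{d}}B = B\), and moreover \(\op^{C}_{\set{d}} = \id_{\Pos{C}}\) whenever \(\dim C < d\). I also record that \(\op_{\set{d}}\) is a strict involution, both on globular sets and on their morphisms.

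Next I would build an auxiliary morphism into the opposite of \(X\). Let \(\tau\colon\Pos{B}\to\op_{\set{d}}X\) send a position \(q\in\Pos[k]{B}\) to \(\sigma(q)\) when \(k<d\) and to \(\inv{\sigma(q)}\) when \(k=d\) (there are no positions of dimension above \(d\)). That \(\tau\) is a morphism of globular sets I would verify by inspecting a position \(q\) of dimension \(k+1\): for \(k+1<d\) it is immediate from \(\sigma\) being a morphism, since \(\op_{\set{d}}X\) and \(X\) share their source and target maps in those dimensions; for \(k+1=d\) the source and target maps of \(\op_{\set{d}}X\) on \(d\)\-cells are the target and source maps of \(X\), so the two required identities reduce, using that \(\sigma\) is a morphism, to the hypothesis that \(\inv{\sigma(q)}\) has the boundary of \(\sigma(q)\) reversed; and for \(k+1>d\) there is nothing to check. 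I then set
\[
  \bar\sigma = (\op_{\set{d}}\tau)\circ\op^{B}_{\set{d}}\colon
  \Pos{B}\xrightarrow{\op^{B}_{\set{d}}}\op_{\set{d}}\Pos{B}
  \xrightarrow{\op_{\set{d}}\tau}\op_{\set{d}}\op_{\set{d}}X = X,
\]
which is a morphism of globular sets as a composite of such, using \(\op_{\set{d}}B=B\) to read \(\op^{B}_{\set{d}}\) as having domain \(\Pos{B}\). Since \(\op_{\set{d}}\tau\) acts on cells exactly as \(\tau\) does and \(\op^{B}_{\set{d}}\) preserves dimensions, \(\bar\sigma(q)=\tau(\op^{B}_{\set{d}}(q))\) is exactly the cell prescribed in the statement; uniqueness is automatic, since the formula determines \(\bar\sigma\) on every position.

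For the last clause, the single external input is the compatibility of the isomorphisms \(\op^{B}\) with the boundary inclusions from~\cite{benjamin_opposites_2024}: reversing the \(d\)\-cells interchanges the \((d-1)\)\-source and \((d-1)\)\-target inclusions, and since \(\op_{\set{d}}\) acts trivially on the tree \(\bdry[d-1]{B}\) and its positions, this specialises to \(\op^{B}_{\set{d}}\circ s_{d-1}^{B}=t_{d-1}^{B}\) and \(\op^{B}_{\set{d}}\circ t_{d-1}^{B}=s_{d-1}^{B}\), read as morphisms into \(\op_{\set{d}}\Pos{B}\) — this is legitimate since their images lie in dimensions below \(d\), where \(\op_{\set{d}}\Pos{B}\) and \(\Pos{B}\) agree. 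Composing on the left with \(\op_{\set{d}}\tau\), which restricts to \(\sigma\) below dimension \(d\), then gives \(\bar\sigma\circ s_{d-1}^{B}=\sigma\circ t_{d-1}^{B}\) and \(\bar\sigma\circ t_{d-1}^{B}=\sigma\circ s_{d-1}^{B}\).

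The part needing the most care is the bookkeeping around the opposite functor: keeping straight that \(\op_{\set{d}}X\) and \(\op_{\set{d}}\Pos{B}\) carry the same underlying cells as \(X\) and \(\Pos{B}\), that \(\op^{B}_{\set{d}}\) preserves dimensions, and above all pinning down the precise form of the compatibility of \(\op^{B}\) with boundary inclusions that yields \(\op^{B}_{\set{d}}\circ s_{d-1}^{B}=t_{d-1}^{B}\). If that is not available off the shelf in the cited form, it can be obtained by the same induction on \(B\) that proves \(\op_{\set{d}}B=B\), unwinding the recursive definitions of \(s^{B}_{k}\), \(t^{B}_{k}\) and \(\op^{B}\) through the wedge sum and the suspension.
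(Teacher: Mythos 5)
Your proof is correct, but it takes a genuinely different route from the paper's. The paper constructs \(\bar{\sigma}\) by induction on the tree \(B\): in the base case \(d=1\) it reverses the list of \(1\)\-cells by hand, and in the inductive step it decomposes \(\sigma\) via the universal property of the wedge sum into morphisms \(\Sigma\Pos{B_i}\to X\), transposes these into the loop globular set \(\Omega X\), applies the inductive hypothesis there, and reassembles by suspension and wedging; the explicit formula for \(\bar{\sigma}\) is then recovered from the recursive description \(\op^{B}_{\set{d}}=\bigvee_i\susp\op^{B_i}_{\set{d-1}}\). You instead verify directly, by a case split on dimension, that the prescription \(q\mapsto\sigma(q)\) (for \(\dim q<d\)) and \(q\mapsto\inv{\sigma(q)}\) (for \(\dim q=d\)) is a morphism \(\tau\colon\Pos{B}\to\op_{\set{d}}X\) — the only nontrivial globularity check, at dimension \(d\), being exactly the hypothesis on \(\inv{\sigma(p)}\) — and then set \(\bar{\sigma}=(\op_{\set{d}}\tau)\circ\op^{B}_{\set{d}}\), using that \(\op_{\set{d}}\) is an involution acting trivially on underlying cells. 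This avoids the tree induction for the construction entirely and makes the single genuinely categorical input explicit. Both arguments rely on the same two external facts — \(\op_{\set{d}}B=B\) (with \(\op^{C}_{\set{d}}=\id\) in dimensions below \(d\)) and the compatibility of \(\op^{B}_{\set{d}}\) with the boundary inclusions from the earlier paper — and your derivation of the swap \(\bar{\sigma}\circ s^{B}_{d-1}=\sigma\circ t^{B}_{d-1}\) from \(\op^{B}_{\set{d}}\circ s^{B}_{d-1}=t^{B}_{d-1}\) matches the paper's. What the paper's inductive construction buys is uniformity with the recursive style of the surrounding development and independence from the opposite of \(X\) itself; what yours buys is brevity and a cleaner separation between the combinatorial reindexing \(\op^{B}_{\set{d}}\) and the pointwise inversion.
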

\begin{proof}
  We proceed by induction on the Batanin tree \(B = \bt{B_1,\dots,B_n}\).
  Suppose first that \(d = 1\), then by the hypothesis on the dimension, it
  must be the case that \(B_i = \bt{}\) for every \(i\), so that
  \[
    \op_{\set{1}}B = \bt{B_n,\dots,B_1} = B.
  \]
  A morphism \(\sigma \colon \Pos{B}\to X\) amounts to a sequence of consecutive
  \(1\)\-cells \(f_1,\dots,f_n\) of \(X\) when \(n > 0\), or to a \(0\)\-cell
  otherwise. The morphism \(\bar{\sigma}\) corresponding to
  the consecutive cells \(\inv f_n, \dots, \inv f_1\) when \(n > 0\), or the
  chosen \(0\)\-cell otherwise, satisfies the claimed formula by definition of
  the automorphism \(\op_{\set{1}}^B\).

  Suppose now that \(d > 1\). Then every tree \(B_i\) has dimension at most
  \(d-1\) and by the inductive hypothesis, we get that
  \[
    \op_{\set{d}}B = \bt{\op_{\set{d-1}}B_1, \dots,\op_{\set{d-1}}B_n}
    = \bt{B_1,\dots,B_n} = B.
  \]
  By the universal property of the wedge sum, a morphism
  \(\sigma \colon \Pos{B}\to X\) can be written as a wedge sum of morphisms
  \(\sigma_i \colon \Sigma \Pos{B_i}\to X\). Transposing those morphisms,
  we get morphisms \(\sigma_i^\dagger \colon \Pos{B_i}\to \Omega X\) for the
  appropriate choices of basepoints for \(X\) depending on \(i\). By the
  inductive hypothesis applied to the morphisms
  \(\sigma_i^\dagger\) and the family of cells
  \(\inv{\sigma_i(p)} = \inv{\sigma(\inc_i(p))},\) indexed by
  \(p\in\Pos[d-1]{B_i}\), there exist morphisms
  \(\bar{\sigma}_i \colon \Pos{B_i}\to \Omega X\) satisfying the formula of the
  lemma. Finally, transposing those morphisms again and wedging them, we get a
  morphism
  \[
    \bar{\sigma} =
    \bigvee_{i=1}^n\bar{\sigma}_i^\dagger\colon
    \Pos{B} \to X
  \]
  In the case that \(d > 0\), we have that the opposite of the wedge sum is
  equal to the wedge sum of the opposites, and similarly with the suspension,
  so the isomorphism \(\op_{\set{d}}^B\) is given simply by
  \[\op_{\set{d}}^B = \bigvee_{i=1}^n \susp \op_{\set{d-1}}^{B_i}.\]
  It follows that the constructed morphism \(\bar{\sigma}\) is given by the
  claimed formula.

  Finally, it remains to show that the source and target of the morphisms
  \(\sigma\) and \(\bar{\sigma}\) are swapped. To show that, we observe first
  that for every tree \(B\) and for every \(d > \dim B\), the globular sets
  \(\op_{\set{d}}\Pos{B}\) and \(\Pos{B}\) are equal, since the latter has no
  positions of dimension \(d\). Moreover, by induction on the tree, we can
  further show that the automorphism \(\op_{\set{d}}^B\) is the identity. It
  follows in particular that for arbitrary tree \(B\) and arbitrary \(d > 0\),
  the isomorphism \(\op_{\set{d}}^{\bdry[d-1]{B}}\) is an identity. Combining
  this fact with the defining formula of \(\bar{\sigma}\), and the compatibility
  of the isomorphism \(\op^B_{\set{d}}\) with the source and target morphisms,
  established in our previous work~\cite[Lemma~7]{benjamin_opposites_2024}, the
  claim follows.
\end{proof}

\subsection{Telescopes}

We define the \emph{telescopes}, a family of operations that allows us to cancel
sequences of consecutive cells composed with their inverses. Again,
using the suspension, it suffices to define those operations
for the composite of \(k\) consecutive \(1\)\-cells in codimension \(0\).
Contrary to previously defined operations such as identities and composites,
the telescopes have arities computads
that are not globular pasting diagrams. In general, such cells can be thought of
as ``proof tactics''. Here, the tactics
we are describing consist of the following steps: associate
the two cells in the middle of a composite of even arity, rewrite their
composition into an identity, cancel the identity, and repeat inductively.

We start by defining the \(2\)\-computad \(\Tel_{k}\) for \(k \in \N_{>0}\),
which are the smallest computads in which the telescope cells are defined.
Those computads can be visualised as
\[
  \Tel_{k} =
  \begin{tikzcd}
    x_{0}
    \ar[r, bend left, "f_{1}"]
    \ar[r, phantom, "\overset{\alpha_{1}}{\Leftarrow}"]
    &
      \ar[l, bend left, "g_{1}"]
      x_{1}
      \ar[r, bend left, "f_{2}"]
      \ar[r, phantom, "\overset{\alpha_{2}}{\Leftarrow}"]
    &
      \ar[l, bend left, "g_{2}"]
      x_{2}
      \ar[r, bend left, "f_{3}"]
      \ar[r, phantom, "\overset{\alpha_{3}}{\Leftarrow}"]
    &
       \ar[l, bend left, "g_{3}"]
      \cdots
      \ar[r, bend left, "f_{k}"]
      \ar[r, phantom, "\overset{\alpha_{k}}{\Leftarrow}"]
    &
       \ar[l, bend left, "g_{k}"]
      x_{k}
  \end{tikzcd}
\]
where \(\alpha_i \colon \graft{0}{f_i}{g_i} \to \id\). More formally, the
computad \(\Tel_k\) is determined by
\begin{align*}
  V^{\Tel_k}_0 &= \set{x^{\Tel}_0,\dots,x^{\Tel}_k}
  & \phi^{\Tel_k}_1(f^{\Tel}_i) &= x^{\Tel}_i\to x^{\Tel}_{i+1} \\
  V^{\Tel_k}_1 &= \set{f^{\Tel}_1,g^{\Tel}_1\dots,f^{\Tel}_k,g^{\Tel}_k}
  & \phi^{\Tel_k}_1(g_i^{\Tel}) &= x^{\Tel}_{i+1}\to x^{\Tel}_i \\
  V^{\Tel_k}_2 &= \set{\alpha^{\Tel}_1,\dots,\alpha^{\Tel}_k}
  & \phi^{\Tel_k}_2(\alpha^{\Tel}_i) &= \graft{0}{f^{\Tel}_i}{g^{\Tel}_i} \to
                                       \id (x^{\Tel}_{i-1})
\end{align*}
and by \(V_n^{\Tel_k} = \emptyset\) for \(k > 2\), where the generators
\(x^{\Tel}_i,f^{\Tel}_i,g^{\Tel}_i,\alpha^{\Tel}_i\) are drawn from disjoint countable sets.
The inclusion of the generators of \(\Tel_{k}\) into generators of
\(\Tel_{k+1}\) induces a generator-preserving monomorphism of computads, that we
denote by \(\telinc_{k} : \Tel_{k} \hookrightarrow \Tel_{k+1}\). Moreover,
the \(1\)\-generators of the computad \(\Tel_{k}\) can be
composed: we may first define the morphism of computads
\(\loopmap_{k} : \free \Pos{\Chain_{2k}} \to \Tel_{k}\), characterised by
\[
  \loopmap_{k}(f^{\Chain}_{i}) =
  \begin{cases}
    \var f^{\Tel}_{i} & \text{If \(1 \leq i \leq k\)} \\
    \var g^{\Tel}_{2k-i+1} & \text{If \(k+1 \leq i \leq 2k\)}
  \end{cases}
\]
This lets us construct the cell
\(\cell(\loopmap_{k})(\unbcomp_{\Chain_{2k}}) \in \cell_1 (\Tel_{k})\). One can
check that it is a cell whose source and target are both given by
\(\var x^{\Tel}_{0}\).

\begin{theorem}
  There exists a cell \(\tel_{k}\in\cell(\Tel_{k})\) whose
  source is the composite \(\cell(\loopmap_{k})(\unbcomp_{\Chain_{2k}})\) and
  whose target is the identity \(\idcell{0}{x^{\Tel}_{0}}\).
\end{theorem}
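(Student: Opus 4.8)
The plan is to argue by induction on $k$, implementing the tactic described above: use the generator $\alpha^{\Tel}_k$ to cancel the innermost bigon $\graft{0}{f^{\Tel}_k}{g^{\Tel}_k}$, delete the resulting identity with a unitor, and recurse on the length-$k{-}1$ telescope sitting inside $\Tel_k$. Write $\xi_k = \cell(\loopmap_k)(\unbcomp_{\Chain_{2k}}) \in \cell_1(\Tel_k)$ for the source cell in the statement. Unwinding the definition of $\loopmap_k$ shows that it sends the leftmost and rightmost $0$\-positions of $\Chain_{2k}$ both to $x^{\Tel}_0$, so $\xi_k$ and $\idcell{0}{x^{\Tel}_0}$ are parallel, as they must be. For $k=1$, the computad $\Tel_1$ has a single $2$\-generator $\alpha^{\Tel}_1 \colon \graft{0}{f^{\Tel}_1}{g^{\Tel}_1} \to \idcell{0}{x^{\Tel}_0}$ and $\xi_1 = \graft{0}{f^{\Tel}_1}{g^{\Tel}_1}$, so I would take $\tel_1$ to be the generator cell of $\alpha^{\Tel}_1$.

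For the inductive step, assume $\tel_k$ has been constructed and work in $\Tel_{k+1}$, writing $c_L = f^{\Tel}_1 \ast_0 \dots \ast_0 f^{\Tel}_k \colon x^{\Tel}_0 \to x^{\Tel}_k$ and $c_R = g^{\Tel}_k \ast_0 \dots \ast_0 g^{\Tel}_1 \colon x^{\Tel}_k \to x^{\Tel}_0$ for some fixed parenthesisation. I would take $\tel_{k+1}$ to be the vertical composite, read from source to target, of the following four $2$\-cells of $\Tel_{k+1}$: (i) a coherence $\gamma$ reassociating the unbiased composite $\xi_{k+1}$ into $c_L \ast_0 (\graft{0}{f^{\Tel}_{k+1}}{g^{\Tel}_{k+1}}) \ast_0 c_R$; (ii) the whiskering $c_L \ast_0 \alpha^{\Tel}_{k+1} \ast_0 c_R$ of the generator $\alpha^{\Tel}_{k+1} \colon \graft{0}{f^{\Tel}_{k+1}}{g^{\Tel}_{k+1}} \to \idcell{0}{x^{\Tel}_k}$, which is a legitimate unbiased composite over the tree $\graft{0}{D_1}{\graft{0}{D_2}{D_1}}$ because $c_L$ ends and $c_R$ begins at $x^{\Tel}_k$, and whose target is $c_L \ast_0 \idcell{0}{x^{\Tel}_k} \ast_0 c_R$; (iii) a coherence $\delta$ from $c_L \ast_0 \idcell{0}{x^{\Tel}_k} \ast_0 c_R$ to $\cell(\telinc_k)(\xi_k)$, simultaneously deleting the central identity and reassociating; and (iv) the cell $\cell(\telinc_k)(\tel_k)$, whose source is $\cell(\telinc_k)(\xi_k)$ and whose target is $\idcell{0}{x^{\Tel}_0}$ since $\telinc_k$ preserves generators and identities. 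Adjacent factors then have matching target and source, so the vertical composite is well defined, has source $\xi_{k+1}$ and target $\idcell{0}{x^{\Tel}_0}$, and I would declare it to be $\tel_{k+1}$.

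The coherences $\gamma$ and $\delta$ exist because in each case both endpoints are composites of the same $1$\-generators among $f^{\Tel}_1,\dots,g^{\Tel}_1$, possibly with an extra copy of $\idcell{0}{x^{\Tel}_k}$ that contributes no $1$\-generator; pulling these composites back along the evident map $\free\Pos{\Chain_m} \to \Tel_{k+1}$ gives parallel $1$\-cells covering $\Chain_m$, hence a full $1$\-sphere in the sense of Section~\ref{subsec:computads}, and the associated coherence is a $2$\-cell since $\dim\Chain_m = 1 < 2$. I expect the real work to be purely organisational: fixing conventions for the generators of $\Tel_k$ and for the parenthesisations of the iterated composites so that the sources and targets of $\gamma$, the whiskered $\alpha^{\Tel}_{k+1}$, $\delta$ and $\cell(\telinc_k)(\tel_k)$ agree on the nose, and verifying that each coherence sphere really is full. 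Invertibility of $\gamma$, $\delta$ and $\cell(\telinc_k)(\tel_k)$ plays no role in this statement but follows from Proposition~\ref{prop:invertible-coh} and Lemma~\ref{lem:equiv-sym-and-functors}, and will matter when the telescopes are used later. One could instead package $\gamma$ and $\delta$ as fillers in the sense of Section~\ref{sec:constructions}, but the ad hoc coherences above already suffice.
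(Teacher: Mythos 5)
Your proposal is correct and follows essentially the same route as the paper: base case $\tel_1 = \var\alpha^{\Tel}_1$, then a four-fold vertical composite consisting of a reassociating coherence, an application of $\alpha^{\Tel}_{k+1}$, an identity-deleting coherence, and the image of $\tel_k$ under $\telinc_k$. The only (immaterial) difference is in the middle step, where the paper keeps all $2k+1$ factors separate and realises the use of $\alpha^{\Tel}_{k+1}$ as the functorialisation of $\unbcomp_{\Chain_{2k+1}}$ at the middle position, whereas you group the outer factors into $c_L$ and $c_R$ and whisker over a $3$\-chain; both yield full spheres and composable cells.
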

\begin{proof}
  We construct the cell \(\tel_{k}\) by induction on \(k\), starting from
  \(\tel_0\) being the identity. We choose then
  \(\tel_{1} = \var \alpha^{\Tel}_{1}\), and we define \(\tel_{k+1}\) as the
  composite of four cells, corresponding to the four steps in the proof tactics
  that \(\tel_{k+1}\) encodes:
\begin{align*}
  \tel_{k+1} =
  & \compcell{1}{\tel_{k+1}^{1}}
    {\compcell{1}{\tel_{k+1}^{2}}
    {\compcell{1}{\tel_{k+1}^{3}}
    {\cell(\telinc_{k})(\tel_{k})}}}.
\end{align*}
It suffices to define the cells \(\tel_{k+1}^{1}\), \(\tel_{k+1}^{2}\) and
\(\tel_{k+1}^{3}\). The cell \(\tel_{k+1}^{1}\) is the application of an
associator that takes the unbiased composite of the boundary of the telescope to
a composite where the cells \(\var f^{\Tel}_{k+1}\) and \(\var g^{\Tel}_{k+1}\)
are associated together. To define this cell, we note that there is a morphism
of computads
\(\sigma\colon\free\Pos{\Chain_{2k+1}} \to \free\Pos{\Chain_{2k+2}}\),
defined by
\[
  \sigma_{V}(f^{\Chain}_{i}) =
  \begin{cases}
    \var f^{\Chain}_{i} & \text{if \(i < k+1\)} \\
    \compcell{0}{(\var f^{\Chain}_{k+1})}{(\var f^{\Chain}_{k+2})}
                        & \text{if \(i = k+1\)}\\
    \var f^{\Chain}_{i+1} & \text{if \(i > k+1\)}
  \end{cases}
\]
We then define the first cell of the telescope to be the following cell, given
here with its source and target:
\begin{align*}
  \tel_{k+1}^{1} &= \coh{\Chain_{2k+2}} {\unbcomp_{\Chain_{2k+2}} \to
                   \cell(\sigma)(\unbcomp_{\Chain_{2k+1}})}
                   {\loopmap_{k+1}} \\
  &
    \begin{aligned}
      \src(\tel_{k+1}^1) &= \compcell{0}{f^{\Tel}_{1}}
                           {\compcell{0}{\ldots}{\compcell{0}{f^{\Tel}_{k+1}}
                           {\compcell{0}{g^{\Tel}_{k+1}}
                           {\compcell{0}{\ldots}{g^{\Tel}_{1}}}}}}\\
      \tgt(\tel_{k+1}^1) &= \compcell{0}{f^{\Tel}_{1}}
                           {\compcell{0}{\ldots}{\compcell{0}{f^{\Tel}_{k}}
                           {\compcell{0}{(\compcell{0}{f^{\Tel}_{k+1}}{g^{\Tel}_{k+1}})}
                           {\compcell{0}{g^{\Tel}_{k}}
                           {\compcell{0}{\ldots}{g^{\Tel}_{1}}}}}}}.
    \end{aligned}
\end{align*}
The second cell consists in using the generator \(\alpha^{\Tel}_{k+1}\) in order
to relate \(\compcell{0}{f^{\Tel}_{k+1}}{g^{\Tel}_{k+1}}\) to the identity on
\(x^{\Tel}_{k}\). We define the cell as follows, given here with its source and
target:
\begin{align*}
  \tel_{k+1}^{2}
  &= \cell(\pomap{f^{\Tel}_{1},\ldots,f^{\Tel}_{k},\alpha^{\Tel}_{k+1},
    g^{\Tel}_{k},\ldots,f^{\Tel}_{1}})
    ((\unbcomp_{\fun{f^{\Chain}_{k+1}}{(\Chain_{2k+1})}})) \\
  & \begin{aligned}
    \src(\tel_{k+1}^{2}) &= \compcell{0}{f^{\Tel}_{1}}
                           {\compcell{0}{\ldots}{\compcell{0}{f^{\Tel}_{k}}
                           {\compcell{0}{(\compcell{0}{f^{\Tel}_{k+1}}{g^{\Tel}_{k+1}})}
                           {\compcell{0}{g^{\Tel}_{k}}
                           {\compcell{0}{\ldots}{g^{\Tel}_{1}}}}}}}\\
    \tgt(\tel_{k+1}^{2}) &= \compcell{0}{f^{\Tel}_{1}}
                           {\compcell{0}{\ldots}{\compcell{0}{f^{\Tel}_{k}}
                           {\compcell{0}{\id (x^{\Tel}_{k})}
                           {\compcell{0}{g^{\Tel}_{k}}
                           {\compcell{0}{\ldots}{g^{\Tel}_{1}}}}}}}.
  \end{aligned}
\end{align*}
The last cell removes the identity in the middle of the composite. We define a
map \(\tau : \free\Pos{\Chain_{2k+1}} \to \free\Pos{\Chain_{2k}}\), by the
following assignment:
\[
  \tau(f^{\Chain}_{i}) =
  \begin{cases}
    \var f^{\Chain}_{i} & \text{if \(i < k+1 \)} \\
    \id(\var x^{\Chain}_{k}) & \text{if \(i = k+1\)} \\
    \var f^{\Chain}_{i-1} & \text{if \(i > k+1\).}
  \end{cases}
\]
This lets us define the last of the three cells as follows, given again with
its source and target:
\begin{align*}
  \tel_{k+1}^{3}
  &= \coh{\Chain_{2k}}
    {\cell(\tau)(\unbcomp_{\Chain_{2k+1}}) \to \unbcomp_{\Chain_{2k}}}
    {\loopmap_{k}}\\
  & \begin{aligned}
    \src(\tel_{k+1}^{3}) &= \compcell{0}{f^{\Tel}_{1}}
                           {\compcell{0}{\ldots}{\compcell{0}{f^{\Tel}_{k}}
                           {\compcell{0}{\id (x^{\Tel}_{k})}
                           {\compcell{0}{g^{\Tel}_{k}}
                           {\compcell{0}{\ldots}{g^{\Tel}_{1}}}}}}}\\
    \tgt(\tel_{k+1}^{3}) &= \compcell{0}{f^{\Tel}_{1}}
                           {\compcell{0}{\ldots}{\compcell{0}{f^{\Tel}_{k}}
                           {\compcell{0}{g^{\Tel}_{k}}
                           {\compcell{0}{\ldots}{g^{\Tel}_{1}}}}}}.
  \end{aligned}
\end{align*}
This finishes the definition of the cell \(\tel_{k+1}\).
\end{proof}

\subsection{Invertibility of composition}
From now on, we consider an \(\omega\)-category
\(\wcat X = (X, \alpha : TX \to X)\). Our aim is to show that a composite of
invertible cells in \(\wcat X\) is itself invertible, we start by making this
statement precise. Given a set of cells \(A\) of \(\mathbf{X} = \sqcup X_{n}\),
we define the set \(C(A)\) of composites of cells of \(A\) to be the set of all
cells of the form
\[\coh[\wcat X]{B}{T(s^{B}_{d-1})(a)\to T(t^{B}_{d-1})(b)}{\sigma},\] where
\(B\) is a Batanin tree of dimension \(d\), the cells
\(a,b \in (T\Pos{\bdry[d-1]B})\) are cover \(\bdry[d-1]{B}\) and
\(\sigma : \Pos{B} \to X\) is a map sending maximal positions of \(B\) onto
cells in \(A\). Given the set \(W\) of invertible cells in \(\wcat X\), the set
of composites of invertible cells is the set \(C(W)\), and we prove here that
every cell in \(C(W)\) is invertible. For the coinductive hypothesis, we need a
slightly stronger statement, namely that iterated composited of invertible cells
are invertible. To set up the notation, define
\begin{align*}
  \left\{\begin{aligned}
    W_{0} &= W \\
    W_{n+1} &= C\left( W_{\leq k}\right)
  \end{aligned}\right. &
  & W_{\leq n} &= \bigcup\limits_{k\leq n} W_{k}
  & W_{\infty} &= \bigcup\limits_{n} W_{n}.
\end{align*}
The set \(W_{\infty}\) is the set of iterated composites of invertible cells.

\begin{thm}\label{thm:inv-comp}
  All cells in \(W_{\infty}\) are invertible. In other words,
  \(W_{\infty} \subset W\).
\end{thm}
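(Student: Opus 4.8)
The plan is to prove the theorem coinductively, exhibiting $W_\infty$ as a post-fixed point of the invertibility functor $F$ from Definition~\ref{def:invertibility}; that is, I will show $W_\infty \subseteq F(W_\infty)$. Fixing a cell $c = \coh[\wcat X]{B}{T(s^B_{d-1})(a) \to T(t^B_{d-1})(b)}{\sigma} \in W_{n+1}$, where $\sigma$ sends maximal positions of $B$ to cells in $W_{\leq n}$, I must produce an inverse $\inv c$ together with cancellation witnesses $\unit_c$, $\counit_c$ that themselves lie in $W_\infty$. The first reduction is to replace $c$ by its chain reduction $\red c$: the filler $\assoc(c)\colon c \to \red c$ is invertible by Proposition~\ref{prop:invertible-coh}, and since invertibility is closed under equivalence (composing invertible cells with invertible witnesses — which is precisely the content being bootstrapped, handled by the coinductive bookkeeping), it suffices to invert $\red c$. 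This lets me assume $B$ has a single maximal position per top-dimensional chain, so that $c$ is, up to the chain maps $\chain_{B,p}$, an unbiased composite of the invertible cells $\sigma(\chain_{B,p}(\unbcomp))$ along a simpler diagram.

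Next I would construct $\inv c$ using the pointwise inverse $\bar\sigma$ from Lemma~\ref{lemma:inverse-comp-sub}: since $\op_{\{d\}}B = B$ for $B$ of dimension $d$, the morphism $\bar\sigma\colon \Pos{B}\to X$ sends each maximal position to a chosen inverse of $\sigma$ on the opposite position, with source and target of $\sigma$ and $\bar\sigma$ swapped. I set $\inv c = \coh[\wcat X]{\op_{\{d\}}B}{\cdots}{\bar\sigma}$ over the opposite pasting diagram — this matches the motivating example where the order of $a$ and $b$ is reversed. For the witnesses, the key device is the telescope: the cell $\tel_k \in \cell(\Tel_k)$ provides, after suspension $\susp^{d-1}$ and after substituting the concrete cells $f_i := \sigma(\chain_{B,p_i}(\cdots))$, $g_i := \bar\sigma(\cdots)$, $\alpha_i := \unit$ or $\counit$ of the $i$-th invertible cell, a cell cancelling the codimension-$1$ chain $f_1 \ast_0 g_1 \ast_0 \cdots$ down to an identity. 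One then assembles $\unit_c$ by composing this telescope cancellation in each top-dimensional chain with the unbiased unitor $\unitor$ (which collapses a composite all of whose top cells are identities to an identity) and a filler relating the biased telescope-shaped composite to the unbiased composite $c \ast_{n} \inv c$; the construction of $\counit_c$ is symmetric, using $\inv c \ast_n c$ instead.

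The bookkeeping step is to check closure: $\inv c$ is again a composite of cells that are either in $W$ (inverses of invertible cells are invertible, by Lemma~\ref{lem:equiv-sym-and-functors} read through $\inv{(\inv x)} = x$) or chain reductions/fillers of such, hence $\inv c \in W_\infty$; and $\unit_c$, $\counit_c$ are composites of telescopes, unbiased unitors, fillers, and the lower-dimensional cancellation data of the $\sigma(p)$, all of which are invertible and so lie in $W_\infty$ — this is where the stratification $W_n$ pays off, since the witnesses live one composition-level up but still inside $W_\infty$. I expect the main obstacle to be the combinatorial compatibility in this last assembly: verifying that the boundaries match, i.e.\ that the telescope cell, once suspended and substituted, really has source equal to the relevant whiskered composite and target an identity, and that the outer filler correctly mediates between the telescope-shaped arity $\Tel_k$ (not a pasting diagram) and the genuine composite $c \ast_n \inv c$ over a globular pasting diagram. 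Carefully tracking the chain maps $\chain_{B,p}$, the opposite isomorphism $\op^B_{\{d\}}$, and Lemma~\ref{lemma:bdry-reduction} through these substitutions is the technical heart of the argument, and I would organise it by handling the codimension-$1$ cancellation via the telescope first and only then performing the remaining cancellation in an unbiased way, exactly as the informal description at the start of Section~\ref{sec:inverse-composite} suggests.
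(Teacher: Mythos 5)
Your overall architecture coincides with the paper's: coinduction exhibiting \(W_\infty\) as a post-fixed point, strong induction on the stratification \(W_n\), the pointwise inverse \(\bar\sigma\) of Lemma~\ref{lemma:inverse-comp-sub} for \(\inv c\), and a three-stage \(\unit_c\) built from a filler, telescope cancellations, and the unbiased unitor. However, your opening move --- replacing \(c\) by its chain reduction \(\red{c}\) and declaring that ``it suffices to invert \(\red{c}\)'' --- is a genuine gap. First, transporting invertibility from \(\red{c}\) back to \(c\) along \(\assoc(c)\) presupposes closure of invertibility under composition, which is the theorem itself; the ``coinductive bookkeeping'' does not obviously rescue this, since the candidate \(\inv c\) must still be produced. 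Second, and decisively, the reduction does not simplify the inductive situation: the maximal positions of \(\red{B}\) are sent by \(\sigma\circ\reduced_B\) to unbiased composites of cells of \(W_{\leq n}\), i.e.\ to elements of \(W_{n+1}\) rather than \(W_{\leq n}\), so \(\red{c}\) lies in \(W_{n+2}\) and the inductive hypothesis cannot be applied to its maximal inputs. Your phrase ``the invertible cells \(\sigma(\chain_{B,p}(\unbcomp))\)'' begs the question: those cells are exactly the composites whose invertibility is at stake.

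The paper avoids this by never reducing \(c\): the inverse is taken over the same tree \(B=\op_{\set{d}}B\) with the sphere flipped, and chain reduction is applied only inside the construction of \(\unit_c\), to the \emph{doubled} tree \(\graft{d-1}{B}{B}\). This is the step your outline misses. By Lemma~\ref{lemma:red-comp}, each maximal position of \(\red{(\graft{d-1}{B}{B})}=\red{B}\) corresponds to a chain of length \(2k\) whose first \(k\) entries come from \(\sigma\) and whose last \(k\) come from \(\bar\sigma\), i.e.\ a palindromic chain \(f_1,\dots,f_k,\inv{f_k},\dots,\inv{f_1}\) --- exactly the shape that \(\tel_k\) cancels using the \(\unit\)-witnesses of the \(f_i\), which are available by the induction on \(n\). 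The other piece you leave implicit is how the per-chain telescopes are composed ``in parallel'': the paper does this via the functorialisation of the reduced tree at its set of maximal positions, using Lemma~\ref{lemma:functorialisation-po} to assemble a single morphism \(\tau\) out of the family \(\tau_p=\cancel_p(\susp^{\cell,d-1}\tel_k)\). With the initial reduction step deleted and the telescope machinery applied to the doubled tree rather than to \(c\) alone, your outline would match the paper's proof.
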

\begin{proof}
  By coinduction, it suffices to show that given a cell \(c \in W_{\infty}\),
  there exists cells \(\inv c \in \mathbf{X}\) and
  \(\unit_{c},\counit_{c}\in W_{\infty}\) satisfying the correct boundary
  conditions. Indeed, this translates into \(W_{\infty}\) being a postfixed
  point of the function defining \(W\), thus it implies that
  \(W_{\infty} \subset W\). Since \(W_\infty\) is a union of sets, it suffices
  to prove for every \(n\in\N\) that for every \(c\in W_n\), there exist
  \(\inv c \in \mathbf{X}\) and \(\unit_{c},\counit_{c}\in W_{\infty}\)
  satisfying the correct boundary conditions. We shall prove this statement by
  strong induction on \(n\in\N\). The base case \(n = 0\) being true by
  definition of invertibility. The rest of the proof is devoted to the inductive
  step. For that, we fix a number \(n\in \N\) and a cell \(c\in W_{n+1}\). By
  definition of \(W_{n+1}\) and Remark~\ref{rmk:alt-fullness}, we may write
  \(c \in X_d\) in the form
  \[
    c = \coh[\wcat X]{B}{T(s^{B}_{d-1})(a) \to T(t^{B}_{d-1})(b)}{\sigma}
  \]
  for some tree \(B\) is a Batanin tree of dimension \(d\), a pair of cells
  \(a,b \in T\Pos{\bdry[d-1]B}\) that cover \(\bdry[d-1]B\), and a morphism
  \(\sigma\colon \Pos{B}\to X\) sending all maximal positions of \(B\)
  to cells in \(W_{\leq n}\).

  To simplify the notation, we will drop the subindices from
  \(\graft{d-1}{B}{B}\) and from \(\op_{\set{d}}\). We will also introduce the
  cells
  \begin{align*}
    c_{0} = \coh{B}{T(s^{B}_{d-1})(a) \to T(t^{B}_{d-1})(b)}{\id_{\free\Pos{B}}}. \\
    c_0' = \coh{B}{T(s^{B}_{d-1})(b) \to T(t^{B}_{d-1})(a)}{\id_{\free\Pos{B}}}.
  \end{align*}
  The former is a cell satisfying that \(c = c_0^{\wcat X}(\sigma)\). By the
  inductive hypothesis, for any maximal position \(p \in \Pos[d]{B}\),
  we have constructed a cell \(\inv{\sigma_{d,V}(p)} \in W_{\le n}\) with the
  opposite source and target than \(\sigma_{d,V}(p)\). Therefore, we may define
  \begin{align*}
    \inv c &= (c'_0)^{\wcat X}(\bar{\sigma}) =
    \coh[\wcat X]{B}{T(s^{B}_{d-1})(b) \to T(t^{B}_{d-1})(a)}{\bar\sigma},
  \end{align*}
  where \(\bar{\sigma} : \Pos{B} \to X\) is the map constructed by
  Lemma~\ref{lemma:inverse-comp-sub}. By the same lemma, we can see that
  \(\inv c\) has the appropriate boundary.

  To conclude the proof, we will construct the cell \(\unit_c\). The cell
  \(\counit_c\) can be obtained in the same way by replacing \(c_0\) with the
  cell \(c'_0\) and the morphism \(\sigma\) with \(\bar{\sigma}\).
  Alternatively, we may assume by induction on \(n\) that
  \(\inv c \in W_{n+1}\). In this case, the constructed inverse
  \(\inv{(\inv c)}\) is equal to the original cell \(c\), so long as we choose
  the inverses of the cells in \(W_n\) to have the same property. Therefore,
  constructing the cell \(\unit_c\) for every \(c\in W_{n+1}\) suffices,
  since we may then define \(\counit_c = \unit_{\inv c}\).

  The cell
  \(\unit_{c} \in W_{\infty}\) will be defined as a composite of three different
  cells
  \[
    \unit_{c} = \compcell{n+1}{m^{1}}{\compcell{n+1}{m^{2}}{m^{3}}}
  \]
  a generalised associator, a composite of telescopes and a generalised unitor.
  The associator and the unitor will both be invertible by
  Proposition~\ref{prop:invertible-coh}. Each of the telescopes will be a cell
  in \(W_\infty\), as explained below, so that \(\unit_c\in W_\infty\).
  Moreover, by construction, we will have that
  \begin{align*}
    \src(\unit_{c}) &= \src(m_1) = \graft{n}{c}{\inv c}
    & \tgt(\unit_{c}) &= \tgt(m_3) = \id(\src c),
  \end{align*}
  so that \(\unit_c\) has the correct boundary.

  \paragraph{Associator.} The first step in constructing \(\unit_{c}\) consists
  in reassociating the binary composite of \(c\) and \(\inv c\). In the target,
  all the maximal cells that are composed in codimension \(1\) are associated
  together.
  First, we define a filler cell
  \[
    m^{1}_{0} = \filler(B_{i},f_{i},A_i, \sigma_{i}),
  \]
  using the following data. The first tree is
  \(B_{1} = \graft{}{D_{d}}{D_{d}}\) and the morphism
  \(f_{1}\colon B_1\to \bat\) picks \(B\) and \(B\). The full sphere \(A_1\)
  is the one defining the unbiased composite over \(B_{1}\), and
  \(\sigma_{1}\) is the map picking \(c_{0}\) and \(c'_{0}\).
  The second tree is \(B_{2} = \red{(\graft{}{B}{B})} = \red{B}\), and the
  second morphism \(f_{2}\) is the morphism that sends any maximal position \(p\)
  onto \(\susp^{d-1}\Chain_{\length_{\graft{}{B}{B}}(p)}\) and lower
  dimensional positions to disks of the same dimension. The sphere
  \(A_2 = a_{2}\to b_{2}\) is given by the cells
  \begin{align*}
    a_{2} &= T(s_{d-1}^{B_2})(a) &
    b_{2} &= T(t_{d-1}^{B_2})(a)
  \end{align*}
  and the map \(\sigma_2\colon \Pos{B_2}\to \Pos{\graft{}{B}{B}}\) is precisely
  \(\reduced_{\graft{}{B}{B}}\). One can check that
  \(\mu^{\str}(f_{1}) = \mu^{\str}(f_{2}) = \graft{}{B}{B}\) by induction on the
  tree. Moreover, the following equality
  \[ \type(\sigma_1)(A_1) = \type(\sigma_2)(A_2),\]
  a consequence of Lemma~\ref{lemma:bdry-reduction},
  justifies that the filler \(m^{1}_{0}\) is well defined. We define then
  \[m^{1} = (m^{1}_{0})^{\wcat X}(\pomap{\sigma,\bar\sigma}).\]
  The source and target of those cells are given by
  \begin{align*}
    \src(m^{1}_{0})
    &= \compcell{}{c_{0}}{c'_{0}} \\
    \tgt(m^{1}_{0})
    &= \coh{B_2}
      {T(s^{B_2}_{d-1})(a)\to T(t^{B_2}_{d-1})(a)}
      {\reduced_{\graft{}{B}{B}}} \\
    \src(m^{1})
    &= \compcell{d-1}{c}{\inv{c}} \\
    \tgt(m^{1})
    &= \coh{B_2}
      {T(s^{B_2}_{d-1})(a) \to
      T(t^{B_2}_{d-1})(a)}
      {\reduced_{\graft{}{B}{B}}}^{\wcat X}(\pomap{\sigma,\bar\sigma}).
  \end{align*}

  \paragraph{Telescope cancellation.}
  Our next step consists in cancelling away the codimension \(1\) composites of
  maximal cells into identities, using the telescopes. For that, we will use the
  functorialisation operation, and Lemma~\ref{lemma:functorialisation-po}.
  We denote by \(M = \Pos[d]{\graft{}{B}{B}}\) the set of maximal positions of
  the tree \(B_2 = \red{(\graft{}{B}{B})}\). We define also the maps
  \begin{align*}
    \tau_{\pm} &: \Pos{B_2} \to X \\
    \tau_{-} &= \alpha \circ T(\pomap{\sigma,\bar\sigma})\circ
      \cell(\reduced_{\graft{}{B}{B}})
      \circ \eta_{\Pos{B_2}} \\
    \tau_{+} &= \alpha \circ
      T(\pomap{\sigma,\bar\sigma}\circ s^{B_2}_{d-1})
      \circ \cell(\underline{\id})
      \circ \eta_{\Pos{B_2}}.
  \end{align*}
  To use Lemma~\ref{lemma:functorialisation-po}, it suffices to define
  for every position \(p\) of \(B_2\), a
  cell \(\tau_{p}\) such that \(\src(\tau_{p}) = \tau_{-}(p)\) and
  \(\tgt(\tau_{p}) = \tau_{+}(p)\). This provides a morphism
  \[
    \tau \colon \Pos{\fun{M}{B_2}} \to X
  \]
  with \(\tau\circ s^{B_2}_{d-1} = \tau_{-}\) and \(\tau\circ
  t^{B_2}_{d-1} = \tau_{+}\). This allows us to define a cell
  \[
    m^{2} = (\fun{M}{(B_2,
      T(s^{B_2}_{d-1})(a) \to
      T(t^{B_2}_{d-1})(a)) })^{\wcat X}(\tau).
  \]
  whose source and target are given by
  \begin{align*}
    \src(m^{2})
      &= \alpha\circ T(\tau_-)(\coh{B_2}{T(s^{B_2}_{d-1})(a) \to
        T(t^{B_2}_{d-1})(a)}{\id})\\
      &= \coh{B_2}
        {T(s^{B_2}_{d-1})(a) \to
        T(t^{B_2}_{d-1})(a)}
        {\reduced_{\graft{}{B}{B}}}^{\wcat X}(\pomap{\sigma,\bar\sigma}) \\
    \tgt(m^{2})
      &= \alpha\circ T(\tau_+)(\coh{B_2}{T(s^{B_2}_{d-1})(a) \to
        T(t^{B_2}_{d-1})(a)}{\id})\\
      &= \coh{B_2}
        {T(s^{B_2}_{d-1})(a) \to
        T(t^{B_2}_{d-1})(a)}
        {\underline{\id}}^{\wcat X}(\pomap{\sigma,\bar\sigma}\circ s^{B_2}_{d-1})
  \end{align*}
  by the \(\alpha\circ T\alpha = \alpha\circ \mu\), naturality, and the unit law
  of the free \(\omega\)\-category monad.

  To finish the construction of \(\tau\), we fix a maximal position \(p\in M\),
  and we
  let \(k = \length_{B}(p)\). Then by Lemma~\ref{lemma:red-comp},
  \(\length_{\graft{}{B}{B}}(p) = 2k\). By the assumption that
  \(c\in W_{n+1}\), we have for every maximal position
  \({f_i^{\Chain}\in \susp^{d-1}\Chain_{k}}\), that the cell
  \[c_{p,i} = \sigma_V(\chain_p(f_i^{\Chain}))\]
  belongs in \(W_{\le n}\). By the induction on \(n\), we may assume that the
  cells \(\inv{c_{p,i}} \in X_d\) and \(\unit_{c_{p,i}}\in W_\infty\) have
  already been constructed. This lets us define the morphism
  \[\cancel_{p}:\susp^{d-1} \Tel_{k} \to \wcat{X},\]
  by the following assignment:
  \begin{align*}
    x_{i}^{\Tel} & \mapsto \sigma_{V}(\chain_{p}(x_{i}^{\Chain})) &
    f_{i}^{\Tel} &\mapsto c_{p,i} &
    g_{i}^{\Tel} &\mapsto \inv{c}_{p,i} &
    \alpha_{i}^{\Tel} &\mapsto \unit_{c_{p,i}},
  \end{align*}
  and the cell
  \[\tau_{p} = \cancel_p(\susp^{\cell,d-1}\tel_k)\in W_{\infty}.\]
  Using Lemma~\ref{lemma:red-comp}, we get the following commutative diagram of
  globular sets:
  \[
    \begin{tikzcd}[row sep = small, column sep = large]
      & \Pos{\susp^{d-1}\Chain_{k}}
        \ar[dd, "\susp^{d-1}\inc^{-}"']
        \ar[dr, "\chain_{p}"]\\
      & &   \Pos{B}\ar[dd,"\inc^{-}"] \\
      \Pos{\susp^{d-1}\Chain_{k}}\ar[r,"\susp^{d-1}\inc^{+}"]
      \ar[dr, "\chain_{p}"']
      & \Pos{\susp^{d-1}\Chain_{2k}} \ar[dr,"\chain_{p}"]\\
      &  \Pos{B}\ar[r,"\inc^{+}"'] & \Pos{\graft{}{B}{B}}
    \end{tikzcd}
  \]
  from which we deduce the equality of morphisms
  \[\pomap{\sigma,\bar\sigma}\circ\chain_{\graft{}{B}{B},p} =
    \pomap{\sigma\circ\chain_{B,p},\bar\sigma\circ\chain_{B,p}}.\] Moreover, one
  can show that the following diagram commutes, by showing that it commutes on
  generators and that both sides are morphisms of \(\omega\)\-categories.
  \[
    \begin{tikzcd}
      T\Pos{\susp^{d-1}\Chain_{2k}}
      \ar[rr,"\cell(\susp^{d-1}\loopmap_{k})"],\ar[d,equal]
      && \cell(\susp^{d-1}\Tel_{k}) \ar[dd,"\cancel_{p}"]\\
      T\Pos{\graft{}{\susp^{d-1}\Chain_{k}}{\susp^{d-1}\Chain_{k}}}
      \ar[d,"T\pomap{\chain_{B,p},\chain_{B,p}}"'] \\
      T\Pos{\graft{}{B}{B}}\ar[r,"T\pomap{\sigma,\bar\sigma}"']
      & TX \ar[r,"\alpha"]& X
    \end{tikzcd}
  \]
  Together with the definition of \(\chain\) from
  Lemma~\ref{lemma:chain-reduce-map}, this shows that the source of
  \(\tau_p\) is given by:
  \begin{align*}
    \src(\tau_{p})
    &= \src (\cancel_p (\susp^{\cell,d-1}\tel_k)) \\
    &= \cancel_{p}(\cell(\susp^{d-1}\loopmap_{k}))
      (\susp^{\cell,d-1}\unbcomp_{\Chain_{2k}}) \\
    &= \alpha\circ T(\pomap{\sigma \circ \chain_{B,p},
      \bar{\sigma} \circ \chain_{B,p}})(\unbcomp_{\susp^{d-1}\Chain_{2k}})\\
    &= \alpha\circ T(\pomap{\sigma,\bar\sigma}\circ \chain_{\graft{}{B}{B},p})
      (\reduced_{\graft{}{B}{B},V}(p))\\
    &= \tau_{-}(p).
  \end{align*}
  Finally, we note that the position
  \(\chain_{B,p}(x_{0}^{\Chain}) \in\Pos[d-1]{\graft{}{B}{B}}\) is the unique
  position parallel to \(\src p\) which belongs to the image of
  \(s^{\graft{}{B}{B}}_{d-1}\), so by definition of \(\underline{\id}\), we
  deduce
  \[T(s^{\graft{}{B}{B}}_{d-1})(\underline{\id}_{V}(p)) =
  \id_{d-1}(\chain_{\graft{}{B}{B},p}(x_{0}^{\Chain})).\] The target of
  \(\tau_{p}\) is thus given by:
  \begin{align*}
    \tgt(\tau_{p})
    &= \tgt (\cancel_p (\susp^{\cell,d-1}\tel_k)) \\
    &= \cancel_{p}(\susp^{\cell,d-1}(\id_0(\var x_0^{\Tel}))) \\
    &= \id_{d-1}(\cancel_{p,V}(x_0^{\Tel}))\\
    &= \id_{d-1}(\sigma_{V}(\chain_{B,p}(x_{0}^{\Chain})))\\
    &= \id_{d-1}(\pomap{\sigma,\bar\sigma}\circ\chain_{\graft{}{B}{B},p}
      (\var x_{0}^{\Chain}))\\
    &= \alpha\circ T\pomap{\sigma,\bar\sigma} (\id_{d-1} (\chain_{\graft{}{B}{B},p}
      (\var x_{0}^{\Chain})))\\
    &= \alpha \circ T\pomap{\sigma,\bar\sigma}\circ Ts^{\graft{}{B}{B}}_{d-1}
      (\underline{\id}_{V}(p))\\
    &= \tau_{+}(p).
  \end{align*}

  \paragraph{Unbiased unitor.} The final step of our construction consists in
  the application of an unbiased unitor, which allow us to cancel the composite
  of identities that we have produced, into a single identity. Explicitly, we
  consider the cells
  \begin{align*}
    m^{3}_{0} &= \unitor(B_{2}, a)
    & m^{3} = (m^{3}_{0})^{\wcat X}(\pomap{\sigma,\bar\sigma} \circ
      s^{\graft{}{B}{B}}_{d-1}).
  \end{align*}
  The source and target of \(m^{3}\) are given by
  \begin{align*}
    \src(m^{3})
    &= \coh{B_2}{T(s^{B_2}_{d-1})(a) \to
      T(t^{B_2}_{d-1})(a)}
      {\underline{\id}}^{\wcat X}(\pomap{\sigma,\bar\sigma}\circ s^{B_2}_{d-1}) \\
    \tgt(m^{3}) &= \id(a)^{\wcat X}
                  (\pomap{\sigma,\bar\sigma}\circ s^{\graft{}{B}{B}}_{d-1}).
  \end{align*}
  This terminates the construction of \(\unit_{c}\). We observe further that
  \begin{align*}
    \partial_{d-1}(\graft{d-1}{B}{B}) &= \partial_{d-1}B  &
    \pomap{\sigma,\bar{\sigma}}\circ s_{d-1}^{\graft{}{B}{B}}
    &= \sigma \circ s_{d-1}^{B}
  \end{align*}
  by the axioms of the strict \(\omega\)\-category \(F^{\str}X\), which allows
  us to rewrite the target of \(m^3\) as
  \[\tgt(m^3) = \id(a)^{\wcat X}(\sigma \circ s_{d-1}^B) = \id(\src c),\]
  which concludes the proof.
\end{proof}

\begin{corollary}
  The relation \(\sim\) is an equivalence relation
\end{corollary}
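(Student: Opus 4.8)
The plan is to note that two of the three defining properties of an equivalence relation are already established: reflexivity is Corollary~\ref{cor:equiv-refl} and symmetry is the first half of Lemma~\ref{lem:equiv-sym-and-functors}. So the only thing left to prove is transitivity, which I would deduce directly from Theorem~\ref{thm:inv-comp}.

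For transitivity, suppose $c \sim c'$ and $c' \sim c''$ for cells $c, c', c'' \in X_n$ of an $\omega$-category $\wcat X$. By definition of $\sim$ there are invertible cells $x \colon c\to c'$ and $y \colon c'\to c''$ in $X_{n+1}$, and since $\tgt x = c' = \src y$ these cells are $n$-composable. I would then form their binary composite $z = x \ast_n y$, which is the value at the diagram picking out $x$ and $y$ of the unbiased composition operation over the two-leaf Batanin tree $B = \graft{n}{D_{n+1}}{D_{n+1}}$. Since $B \neq D_{n+1}$, this operation is of the form $\coh[\wcat X]{B}{T(s_n^B)(a) \to T(t_n^B)(a)}{\sigma}$ with $a = \unbcomp_{\partial_n B, n}$ covering $\partial_n B = D_n$, and with $\sigma$ sending the two maximal positions of $B$ to the invertible cells $x$ and $y$. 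Hence $z$ belongs to $C(W) = W_1 \subseteq W_{\infty}$, so by Theorem~\ref{thm:inv-comp} it is invertible. Its source is $\src x = c$ and its target is $\tgt y = c''$, so $c \sim c''$, which completes transitivity and hence the proof that $\sim$ is an equivalence relation.

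The argument is short because Theorem~\ref{thm:inv-comp} has already done all the heavy lifting; the only point that needs a moment's care — and the closest thing to an obstacle — is confirming that a binary composite of two cells genuinely lies in the class $C(W)$ appearing in that theorem, i.e. that the operation $\ast_n$ is an instance of the unbiased composition $\unbcomp_B$ over the two-leaf tree and that the two cells being composed occupy exactly its maximal positions. Both facts are immediate from the description of $\unbcomp_B$ in Section~\ref{subsec:omega-cats}, so no real difficulty remains.
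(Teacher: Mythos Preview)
Your proof is correct and follows essentially the same approach as the paper: reflexivity and symmetry are quoted from Corollary~\ref{cor:equiv-refl} and Lemma~\ref{lem:equiv-sym-and-functors}, and transitivity is obtained by applying Theorem~\ref{thm:inv-comp} to the binary composite of the two witnessing invertible cells. The only difference is that you spell out explicitly why this composite lies in $C(W)$, which the paper leaves implicit.
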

\begin{proof}
  We have proven that this relation is symmetric in
  Lemma~\ref{lem:equiv-sym-and-functors} and reflexive
  in Corollary~\ref{cor:equiv-refl}, so it suffices to prove it is transitive.
  Consider three \(d\)\-cells \(c_{1},c_{2},c_{3}\) such that
  \(c_{1} \sim c_{2}\) and
  \(c_{2} \sim c_{3}\). Then there exist two invertible cells \(x,x'\) with
  \begin{align*}
    \src(x) &= c_{1} & \tgt(x) &= c_{2} &
    \src(x') &= c_{2} & \tgt(x') &= c_{3}.
  \end{align*}
  By Theorem~\ref{thm:inv-comp}, the cell \(\compcell{d}{x}{x'}\) is invertible
  and witnesses the relation \(c_1\sim c_3\).
\end{proof}

\subsection{Invertible cells of a computad}
We now consider the question of the invertibility of cells in an
\(\omega\)-category freely generated by a computad.
Proposition~\ref{prop:invertible-coh} and Theorem~\ref{thm:inv-comp}
applied in a computad give a syntactic recognition criterion for invertibility:
\begin{prop}\label{prop:invertible-computad}
  Consider a cell \(c \in \cell_d(C)\) in a computad \(C\). If
  \(\supp_d(c) = \emptyset\) then the cell \(c\) is invertible.
\end{prop}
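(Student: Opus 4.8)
The plan is to derive the proposition from the two main results already at hand, Proposition~\ref{prop:invertible-coh} and Theorem~\ref{thm:inv-comp}, by a structural recursion on the cell $c$; this recursion is well founded because the cells of a computad are inductively generated. To set it up, observe first that $c$ cannot be a generator, since $\supp_d(\var v) = \{v\}$ is non-empty. Hence $c = \coh{B}{A}{f}$ for some Batanin tree $B$ of dimension at most $d$, a full sphere $A \in \Full_{d-1}(B)$, and a morphism of computads $f \colon \free_d \Pos{B} \to C$, and I would then split the argument according to whether $\dim B < d$ or $\dim B = d$.

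Suppose first $\dim B < d$. Then $c$ is a coherence cell, and I would read off its invertibility from Proposition~\ref{prop:invertible-coh}. Concretely, by the rule $\cell(f)(\coh{B}{A}{g}) = \coh{B}{A}{f\circ g}$ for the action of morphisms on coherence cells, we have $c = \cell(f)(\coh{B}{A}{\id_{\free_d \Pos{B}}})$. The cell $\coh{B}{A}{\id_{\free_d \Pos{B}}}$ is invertible in the $\omega$-category freely generated by $\Pos{B}$: it is exactly one of the cells treated in the first part of the proof of Proposition~\ref{prop:invertible-coh}, namely a cell of the form $\coh{B}{A}{\id}$ over a tree $B$ whose dimension is strictly smaller than the dimension $d$ of the cell. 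Since a morphism of computads underlies a morphism of the freely generated $\omega$-categories, and such morphisms preserve invertibility by Lemma~\ref{lem:equiv-sym-and-functors}, it follows that $c = \cell(f)(\coh{B}{A}{\id_{\free_d \Pos{B}}})$ is invertible.

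Suppose now $\dim B = d$. By Remark~\ref{rmk:alt-fullness} we may write $A = T(s^{B}_{d-1})(a) \to T(t^{B}_{d-1})(b)$ for cells $a, b$ covering $\bdry[d-1]{B}$; hence, in the $\omega$-category freely generated by $C$, the cell $c$ is precisely a composite of the cells $f_V(p)$ indexed by the maximal positions $p \in \Pos[d]{B}$, in the sense of Section~\ref{sec:inverse-composite}. For every such $p$ we have $\supp_d(f_V(p)) \subseteq \supp_d(c) = \emptyset$, and $f_V(p)$ is a proper subterm of $c$, so the recursion hypothesis gives that each $f_V(p)$ is invertible. Therefore $c$ is a composite of invertible cells, so $c \in W_\infty$, and Theorem~\ref{thm:inv-comp} shows that $c$ is invertible, completing the recursion.

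I do not expect any genuinely hard step: the statement is essentially a corollary of the two main theorems. The points that need a little care are, in the first case, the identification of the coherence cell $\coh{B}{A}{f}$ of the computad with a coherence operation of the $\omega$-category freely generated by $C$ (so that Proposition~\ref{prop:invertible-coh} applies on the nose), and, in the second case, the matching of the syntactic form of $c$ with the definition of the set of composites of invertible cells; both amount to unfolding the relevant definitions. It is also worth recording explicitly that the recursion terminates, which is immediate from the inductive generation of the cells of a computad.
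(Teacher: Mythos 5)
Your proposal is correct and follows essentially the same route as the paper: a structural induction on the cell, ruling out the generator case by the non-emptiness of its support, handling the case $\dim B < d$ by Proposition~\ref{prop:invertible-coh}, and the case $\dim B = d$ by noting that each $\sigma_V(p)$ for $p \in \Pos[d]{B}$ has empty support, is invertible by the inductive hypothesis, and then applying Theorem~\ref{thm:inv-comp}. The extra unfoldings you record (writing $c$ as the image of $\coh{B}{A}{\id}$ under a morphism, and invoking Remark~\ref{rmk:alt-fullness} to match the form required by Theorem~\ref{thm:inv-comp}) are left implicit in the paper but are exactly the right justifications.
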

\begin{proof}
  We proceed by structural induction on the cell \(c\). When \(c\) is a
  generator, the statement is vacuously true since the support can not be
  empty. Suppose therefore that \(c = \coh{B}{A}{\sigma}\) and that the
  result holds for the cell \(\sigma_V(p)\) for every \(p\in \Pos[d]{B}\). If
  no such position exists, then \(\dim B < d\) and \(c\) is invertible by
  Proposition~\ref{prop:invertible-coh}. Otherwise, the support of each
  \(\sigma_V(p)\) is empty, so it is invertible. By Theorem~\ref{thm:inv-comp},
  we conclude then that \(c\) is invertible as well.
\end{proof}

This sufficient condition may not be necessary in general, indeed in an
arbitrary computad, a generator may be invertible, violating this condition.
However, this condition is necessary in finite dimensional computads. To prove
this, we rely on the following technical result.
\begin{lemma}\label{lemma:support-noninv}
  Consider a cell \(c \in \cell_{d}(C)\) in a computad \(C\) and a cell \(c\).
  such that \(\supp_{d}(c) = \emptyset\), then
  \(\supp_{d-1}(\src (c)) = \supp_{d-1}(\tgt(c)) = \supp_{d-1}(c)\).
\end{lemma}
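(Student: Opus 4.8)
The plan is to prove the statement --- which I read as: \emph{if \(c\in\cell_d(C)\) satisfies \(\supp_d(c)=\emptyset\), then \(\supp_{d-1}(\src c)=\supp_{d-1}(\tgt c)=\supp_{d-1}(c)\)} --- by structural induction on the cell \(c\). The case \(c=\var v\) is vacuous, since then \(v\in\supp_d(c)\). Hence \(c=\coh{B}{A}{f}\) for a Batanin tree \(B\) with \(\dim B\le d\), a full sphere \(A=(u,w)\in\Full_{d-1}(B)\), and a morphism \(f\colon\free_d\Pos{B}\to C\); by Remark~\ref{rmk:alt-fullness} we may write \(u=T(s^B_{d-1})(a')\) and \(w=T(t^B_{d-1})(b')\) for cells \(a',b'\in(T\Pos{\bdry[d-1]{B}})_{d-1}\) covering \(\free\Pos{\bdry[d-1]{B}}\). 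I would first dispatch the case \(\dim B<d\): there are then no \(d\)-positions, \(\bdry[d-1]{B}=B\) and \(s^B_{d-1}=t^B_{d-1}=\id\), so \(u=a'\) and \(w=b'\) both cover \(\free\Pos{B}\), and since the \((d-1)\)-generators occurring in a cell \(\cell(f_{d-1})(x)\) are exactly those of \(\bigcup\{\supp_{d-1}(f_V(p)) : p\in\supp_{d-1}(x)\}\) (lower-dimensional generators of \(x\) contributing none), all three sets equal \(\bigcup_{p\in\Pos[d-1]{B}}\supp_{d-1}(f_V(p))\).

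Assume now \(\dim B=d\). From \(\supp_d(c)=\bigcup_{\pi\in\Pos[d]{B}}\supp_d(f_V(\pi))=\emptyset\) we get \(\supp_d(f_V(\pi))=\emptyset\) for every \(d\)-position \(\pi\), so the induction hypothesis applies to the strictly smaller cells \(f_V(\pi)\). Since \(f\) preserves source and target, \(\src(f_V(\pi))=f_V(\src\pi)\) and \(\tgt(f_V(\pi))=f_V(\tgt\pi)\), and the induction hypothesis gives
\[
  \supp_{d-1}(f_V(\src\pi))=\supp_{d-1}(f_V(\tgt\pi))=\supp_{d-1}(f_V(\pi))
  \qquad\text{for every }\pi\in\Pos[d]{B}.
\]
Using fullness of \(A\), one computes \(\supp_{d-1}(\src c)=\bigcup_q\supp_{d-1}(f_V(s^B_{d-1}(q)))\) and \(\supp_{d-1}(\tgt c)=\bigcup_q\supp_{d-1}(f_V(t^B_{d-1}(q)))\), with \(q\) ranging over the \((d-1)\)-positions of \(\bdry[d-1]{B}\); and using the displayed identity to absorb the contribution of the interior \(d\)-cells, \(\supp_{d-1}(c)=\bigcup_{p\in\Pos[d-1]{B}}\supp_{d-1}(f_V(p))\).

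By Lemma~\ref{lemma:id-everywhere} the \((d-1)\)-positions of \(B\) are partitioned into classes indexed by the \((d-1)\)-positions \(q\) of \(\bdry[d-1]{B}\), the class of \(q\) containing both \(s^B_{d-1}(q)\) and \(t^B_{d-1}(q)\). The heart of the argument --- and the step I expect to be the main obstacle --- is the combinatorial claim that each class is \emph{connected through \(d\)-positions}: it can be listed in order as \(p_0,\dots,p_m\) with \(p_0=s^B_{d-1}(q)\), \(p_m=t^B_{d-1}(q)\) and, for each \(i\), a \(d\)-position \(\pi_i\) with \(\src\pi_i=p_{i-1}\) and \(\tgt\pi_i=p_i\). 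I would prove this by induction on \(B=\btlist{B_1,\dots,B_n}\) through the identification \(\Pos{B}=\bigvee_i\Sigma\Pos{B_i}\): since \(d-1\ge 1\) the wedge glues only \(0\)-cells, so the \((d-1)\)- and \(d\)-positions of \(B\) are the disjoint unions of the suspended \((d-2)\)- and \((d-1)\)-positions of the \(B_i\), the inclusions \(s^B_{d-1},t^B_{d-1}\) restrict to \(\Sigma s^{B_i}_{d-2},\Sigma t^{B_i}_{d-2}\), and each class for \(B\) is the suspension of a class for some \(B_i\); the base case \(d=1\) says that the \(1\)-positions of a chain \(\Chain_k\) form a directed path joining its two endpoints.

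Granting the claim, the identity \(\supp_{d-1}(f_V(\src\pi))=\supp_{d-1}(f_V(\tgt\pi))\) propagates along each chain \(p_0\to\cdots\to p_m\), so \(p\mapsto\supp_{d-1}(f_V(p))\) is constant on each class; in particular it agrees on \(s^B_{d-1}(q)\), on \(t^B_{d-1}(q)\) and on every \((d-1)\)-position of the class of \(q\). Feeding this back into the three unions of the previous paragraph collapses all of them to \(\bigcup_q\supp_{d-1}(f_V(s^B_{d-1}(q)))\), which gives \(\supp_{d-1}(\src c)=\supp_{d-1}(\tgt c)=\supp_{d-1}(c)\) and closes the induction.
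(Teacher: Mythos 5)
Your proof is correct and follows essentially the same route as the paper's: structural induction on the cell, the vacuous generator case, the split on $\dim B < d$ versus $\dim B = d$, the support lemma to express the three supports as unions, the inductive hypothesis applied to the images of the $d$\-positions, and the key combinatorial fact that parallel $(d-1)$\-positions are linked by chains of consecutive $d$\-positions. If anything, you spell out the connectivity claim (and its proof by induction through the wedge/suspension decomposition) in more detail than the paper, which only asserts it via ``a similar inductive argument.''
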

\begin{proof}
  We prove this result by structural induction on \(c\). Again, when \(c\) is a
  generator the statement is vacuous, because its \(d\)\-support is not empty.
  Suppose that \(c = \coh{B}{a\to b}{\sigma}\) and that for every \(p\in\Pos[d]{B}\)
  the result holds for the cell \(\sigma_{V}(p)\). If no such position exists, then
  \(\dim B < d\), so \(a\) covers \(\Pos{B}\). Thus, by the support
  lemma~\cite[Lemma~7.3]{dean_computads_2024}, we have that
  \begin{align*}
    \supp_{d-1}(c) &= \bigcup\limits_{p\in\Pos[d-1]{B}}
                     \supp_{d-1}(\sigma_{V}(p))
                   = \supp_{d-1}(\cell(\sigma)(a))
  \end{align*}
  Otherwise, \(\dim B = d\),  so there exists a cell
  \(a'\in T(\Pos{\bdry[d-1]B})_{d-1}\) covering \(\bdry[d-1]{B}\) such that
  \(
    a = T(s^{B}_{d})(a').
  \)
  Since \(a'\) covers \(\bdry[d-1]{B}\), we get by the support lemma that
  \begin{align*}
    \supp_{d-1}(c) &= \bigcup\limits_{p\in\Pos[d-1]{B}}
    \supp_{d-1}(\sigma_{V}(p)) \cup \bigcup\limits_{p\in\Pos[d]{B}}
    \supp_{d-1}(\sigma_{V}(p)) \\
    \supp_{d-1}(\src(c)) &= \bigcup\limits_{p\in\Pos[d-1]{\bdry[d-1]{B}}}
    \supp_{d-1}(\sigma_{V}(s_{d-1}^B(p))).
  \end{align*}
  The former clearly contains the latter, so it suffices to prove the converse
  inclusion. For every position \(p \in \Pos[d]{B}\), we have that
  \(\supp_d(\sigma_V(p)) \subset \supp_d(c) = \emptyset\), so by induction
  \[\supp_{d-1}(\sigma_{V}(p)) = \supp_{d-1}(\sigma_{V}(\src (p))).\]
  Hence, the second component in the union defining \(\supp_{d-1}(c)\) is
  superfluous. Moreover, by Lemma~\ref{lemma:id-everywhere},
  for every position \(p\in\Pos[d-1]{B}\), there exists a unique position
  \({q\in \Pos[d-1]{\bdry[d-1]{B}}}\) such that \(s^B_{d-1}(q)\) and \(p\) are
  parallel. By a similar inductive argument,
  one can show that there exists a sequence of consecutive \(d\)\-positions
  \(p_1,\dots,p_n\in \Pos[d]{B}\) with \(\src(p_1) = s_{d-1}^B(q)\) and
  \(\tgt(p_n) = p\). Therefore, by the inductive hypothesis,
  \[\supp_{d-1}(\sigma_V(s_{d-1}^B(q))) =
  \supp_{d-1}(\sigma_V(\tgt(p_1))) =
  \dots = \supp(\sigma_V(p)) \]
  This concludes the equality of the support of \(c\) and its source.
  A similar argument shows the result for the target.
\end{proof}

\begin{prop}
  In a finite dimensional computad \(C\), a cell \(c \in\cell_{d}(C)\) is
  invertible if and only if \(\supp_{d}(c) = \emptyset\).
\end{prop}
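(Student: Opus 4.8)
The implication from right to left is exactly Proposition~\ref{prop:invertible-computad}, so the plan is to prove the converse: every invertible cell \(c\in\cell_d(C)\) satisfies \(\supp_d(c)=\emptyset\). Since \(\supp_d(c)\) is by construction a subset of the set \(V_d^C\) of \(d\)\-dimensional generators of \(C\), the claim is automatic as soon as \(d\) exceeds the dimension \(N\) of \(C\), and it is vacuous for \(d=0\) because invertibility is only defined for positive\-dimensional cells. I would therefore argue by downward induction on \(d\) in the range \(1\le d\le N\), taking the statement for dimension \(d+1\) as the inductive hypothesis.

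For the inductive step, let \(c\colon u\to v\) be an invertible cell of \(\cell_d(C)\). By Definition~\ref{def:invertibility} there is a cell \(\inv c\) together with an \emph{invertible} cell
\[
  \unit_c \colon c\ast_{d-1}\inv c \longrightarrow \id_{d-1}(u)
\]
of dimension \(d+1\). The inductive hypothesis applied to \(\unit_c\) gives \(\supp_{d+1}(\unit_c)=\emptyset\), so Lemma~\ref{lemma:support-noninv} yields
\[
  \supp_d\bigl(c\ast_{d-1}\inv c\bigr)=\supp_d\bigl(\id_{d-1}(u)\bigr).
\]
The cell \(\id_{d-1}(u)\) is a coherence over the disk \(D_{d-1}\), which has no positions of dimension \(d\), so the recursive description of the support makes the right\-hand side empty; the cell \(c\ast_{d-1}\inv c\) is the unbiased composition over \(\susp^{d-1}\Chain_2\), whose two \(d\)\-dimensional positions are sent to \(c\) and \(\inv c\) respectively, so the left\-hand side equals \(\supp_d(c)\cup\supp_d(\inv c)\) and in particular contains \(\supp_d(c)\). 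Hence \(\supp_d(c)=\emptyset\), which closes the induction.

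The two support computations are the only calculations needed, and both follow at once from the recursion \(\supp_d(\coh{B}{A}{\sigma})=\bigcup_{p\in\Pos[d]{B}}\supp_d(\sigma_V(p))\) once one records the shapes of the trees \(D_{d-1}\) and \(\susp^{d-1}\Chain_2\); alternatively one may cite the support lemma of Dean et al.~\cite{dean_computads_2024}. The single idea of the argument is to use the coinductive definition of invertibility to trade \(c\) for the one\-dimension\-higher invertible cell \(\unit_c\), with finite\-dimensionality of \(C\) grounding the descending induction; I do not expect any genuine obstacle beyond this bookkeeping.
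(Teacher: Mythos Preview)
Your proof is correct and is essentially the same argument as the paper's: both exploit that \(\unit_c\) is invertible one dimension up, use Lemma~\ref{lemma:support-noninv} to compare the \(d\)\-supports of \(c\ast_{d-1}\inv c\) and \(\id_{d-1}(u)\), and ground the argument in the finite dimension of \(C\). The only difference is cosmetic: you run a clean downward induction on \(d\), while the paper argues by contradiction, assuming \(\supp_d(c)\neq\emptyset\) and iterating the contrapositive of Lemma~\ref{lemma:support-noninv} upward until it collides with the absence of generators above dimension \(N\).
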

\begin{proof}
  Sufficiency of the support condition has already been established in
  Proposition~\ref{prop:invertible-computad}, so it remains to show necessity.
  For that, suppose that \(C\) has no generator above some dimension \(n\) and
  let \(c\in \cell_d(C)\) be invertible with \(\supp_d(c)\not=\emptyset\). Then
  by the assumption, \(d \le n\). Moreover, \(\supp_d(\src (\unit_c))\) is
  non-empty, since it contains \(\supp_d(c)\), while \(\supp_d(\tgt(\unit_c))\)
  is empty being the support of an identity. By Lemma~\ref{lemma:support-noninv},
  this implies that \(\supp_{d+1}(\unit_c)\) must be non-empty. Since \(\unit_c\)
  is invertible, iterating this argument, we get an invertible
  cell \(\unit^{n+1-d}_c\in \cell_{n+1}(C)\) with non-empty top-dimensional
  support. But this contradicts the hypothesis that \(C\) has no generators
  of dimension \(n+1\).
\end{proof}

\section{Implementation in CaTT}
\label{sec:implem}
The description of computads that we work with in this article can be
equivalently formulated as a dependent type theory called \catt, introduced by
Finster and Mimram~\cite{finster_typetheoretical_2017}. In fact the formulation
of computads proposed by Dean et al.~\cite{dean_computads_2024} was heavily
influenced by this dependent type theory, and the syntactic equivalence between
the two was proved by the two authors of the paper and
Sarti~\cite{benjamin_catt_2024}. An implementation of a typechecker for the
dependent type theory \catt is available and maintained by the first
author\footnote{\url{https://archive.softwareheritage.org/browse/origin/directory/?origin_url=https://github.com/thibautbenjamin/catt&timestamp=2024-06-16T21:31:06.869967\%2B00:00}}.
We have integrated the work presented in this article to the implementation, in
such a way that given a term in the theory whose variables are all of dimension
lower than the term, one can automatically compute its inverse or a witness of
equivalence for this term. In practice, the user has defined a term \verb|t|,
which happens to be invertible, they can access to the chosen inverse computed
by our algorithm by inputting \verb|I(t)|, and they can access the chosen
witness of equivalence by inputting \verb|U(t)|.

This allows for improved mechanisation of terms in \catt, complementing the
suspension and the functorialisation of terms that were already implemented. We
have assessed the relevance of this mechanisation principle on a practical
example: the definition of the term corresponding to the Eckmann-Hilton cell, as
well as its inverse and the witness that these two cancel each other. The choice
of this particular example is motivated by several considerations: They are
important examples in higher category theory, due to their connection with
topology and homotopy theory. These cells are complicated enough to define the
simplification will be significant on it, and is not definable in a pasting
scheme, making the mechanisation non-trivial. Yet they are among the simplest
examples with this property, and are reasonable to define even without this
mechanisation principle, making them a very good example.

\begin{figure}
  \centering
  \begin{tabular}{|c|cccc|}
    \hline
    name & LoC & LoC (ratio) & declarations & declarations (ratio) \\
    \hline
    vanilla & 531 & \(+0\%\) & 93 & \(+0\%\) \\
    \verb|s| & 470 & \(-11.5\%\) & 85 & \(-8.6\%\) \\
    \verb|sf| & 397 & \(-25.2\%\) & 76 &  \(-18.2\%\) \\
    \verb|sfb| & 378 & \(-28.8\%\) & 70 & \(-24.7\%\) \\
    \verb|sfbi| & 73 & \(-86.2\%\) & 16 & \(-82.8\% \) \\
    \verb|new| & 23 & \(-95.7\%\) & 10 & \(-89.2\% \) \\
    \hline
  \end{tabular}
  \caption{Mechanisation of the cancellation data \(\unit\) of the
    Eckmann-Hilton cell.}
  \label{fig:catt-mechanisation}
\end{figure}

We have formalised the Echmann-Hilton and its inverse as well as the \(\unit\)
cell for the equivalence, and verified them in \catt using various levels of
mechanisation. The results are compiled in Figure~\ref{fig:catt-mechanisation},
and the files that we used to assess these are available under the
\verb|examples/invertibility-paper/| directory of the repository. To assess
the complexity of a file, we use as proxies the number of lines of code written
in the file and the number of individual declaration the file has. The number of
declaration fails to account for the complexity of each of the declaration,
while the number of lines of codes magnifies this parameter by accounting for
line skips for code formatting and comments. Overall, these two proxies together
provide a reasonable proxy for the complexity of the definitions. The levels of
mechanisation that we consider are cumulative: ``vanilla'' has no automation,
and then \verb|s| indicates that the suspension is used, \verb|f| indicates that
the functorialisation is used, \verb|b| indicates that the compositions and
identities are taken as built-ins and \verb|i| indicates that inverses and
witnesses of composition are computed automatically using inverses. The ratios
are always considered against the vanilla case with no mechanisation at all.
Both of the chosen metrics indicate that the mechanisation of inverses is by far
the most efficient mechanisation principle that we have defined for this
example. It is also by far the most intricate of those mechanisation principles.
Since our example consists in defining a cell, its inverse, and the \(\unit\)
cell associated to the inversion, one would expect that adding the mechanisation
of the inverses to divide the size by \(3\). However, comparing \verb|sfbi| and
\verb|sfb|, we observe a diminution of \(80\%\) in the number of lines of code
and of \(77\%\) of the number of declarations. This can be explained by the fact
that the \(\unit\) cell is more complex than both the Eckmann-Hilton cell and
its inverse, and also by the fact that the inverse mechanisation also allows for
simplification in the definition of the Eckmann-Hilton cell itself. Using the
automation of the construction of opposites and inverses, we were also able to
propose a new construction of the Eckmann-Hilton cell, presented in the file
\verb|new|, which is even shorter.


\bibliographystyle{plainurl}
\bibliography{bibliography}

\end{document}